\DeclarePairedDelimiter{\abs}{\lvert}{\rvert}
\DeclarePairedDelimiter{\norm}{\lVert}{\rVert}
\newcommand{\numberset}{\mathbb}
\newcommand{\N}{\numberset{N}}
\newcommand{\R}{\numberset{R}}
\newcommand{\Z}{\numberset{Z}}
\newcommand{\sphere}{\mathbb{S}}
\newcommand{\Ham}{\text{Ham}}
\newcommand{\D}{\mathbb{D}}
\newtheorem{thm}{Theorem}[section]
\newtheorem{lem}[thm]{Lemma}
\newtheorem{prp}[thm]{Proposition}
\newtheorem{cor}[thm]{Corollary}
\newtheorem{defn}[thm]{Definition}
\newtheorem{nota}[thm]{Notation}
\newtheorem{rem}[thm]{Remark}
\newtheorem*{es}{Example}
\DeclareMathOperator{\Fix}{Fix}
\DeclarePairedDelimiter\ceil{\lceil}{\rceil}
\DeclarePairedDelimiter\floor{\lfloor}{\rfloor}
\newcommand*{\@old@slash}{}\let\@old@slash\slash
\def\slash{\relax\ifmmode\delimiter"502F30E\mathopen{}\else\@old@slash\fi}
\def\bign#1{\mathclose{\hbox{$\left#1\vbox to8.5\p@{}\right.\n@space$}}\mathopen{}}
\def\Bign#1{\mathclose{\hbox{$\left#1\vbox to11.5\p@{}\right.\n@space$}}\mathopen{}}
\def\biggn#1{\mathclose{\hbox{$\left#1\vbox to14.5\p@{}\right.\n@space$}}\mathopen{}}
\def\Biggn#1{\mathclose{\hbox{$\left#1\vbox to17.5\p@{}\right.\n@space$}}\mathopen{}}
\title{Hamiltonian Braids via Generating Functions}
\author{Francesco Morabito}
\date{October 2024}
\begin{document}
\maketitle
\begin{abstract}
    Given a compactly supported Hamiltonian diffeomorphism of the plane, one can define a generating function for it. In this paper, we show how generating functions retain information about the braid type of collections of fixed points of Hamiltonian diffeomorphisms. One the one hand, we show that it is possible to define a filtration keeping track of linking numbers of pairs of  fixed points on the Morse complex of the generating function. On the other, we provide a finite-dimensional proof of a Theorem by Alves and Meiwes about the lower-semicontinuity of the topological entropy with respect to the Hofer norm. The technical tools come from work by Le Calvez which was developed in the 90s. In particular, we apply a version of positivity of intersections for generating functions.
\end{abstract}
\tableofcontents
\section{Introduction}
\subsection{Overview}

A symplectic manifold $(M, \omega)$ is a smooth manifold $M$ equipped with a smooth, closed and non-degenerate de Rham 2-form $\omega$. Given a (possibly) time-dependent function $H\in\mathscr{C}^\infty(\sphere^1_t\times M; \R)$ on the manifold $M$, we can define the associated Hamiltonian vector  field $X_H$ by the equation
\begin{equation*}
-dH_t=\iota_{X_{H_t}}\omega.
\end{equation*}
The function $H$ is then called a Hamiltonian. The time 1-map of $X_H$, when defined, is called a Hamiltonian diffeomorphisms. We denote it by $\phi^1_H$. If the manifold is closed the time 1-maps are always defined, and form the group of Hamiltonian diffeomorphisms of $M$, denoted $\Ham(M,\omega)$. If $M$ is non-compact, one may restrict their attention to compactly supported Hamiltonians: their time 1-maps are always well defined and they form the group of compactly supported Hamiltonian diffeomorphisms of $M$, denoted $\Ham_c(M, \omega)$. Of course, whenever $M$ is closed, $\Ham_c(M, \omega)=\Ham(M, \omega)$. In the following we shall omit the symplectic form from the notation, as it will be either obvious or irrelevant.

The study of the dynamics of Hamiltonian diffeomorphisms is a flourishing domain of research, and itself one of the motivation for the existence of Symplectic Topology. Several tools have been developed to study it. In this paper we focus our attention on Generating Functions. They encode in a Morse-theoretical finite-dimensional problem the information about the fixed points of Hamiltonian diffeomorphisms, together with a real number called ``symplectic action'' of the fixed points. One can construct a filtered chain complex from a generating function. We shall restrict our attention on Hamiltonian diffeomorphisms of the plane. A generating function is, roughly speaking, a function defined on a vector bundle on the plane, and whose critical points are in bijection with the fixed points of the Hamiltonian diffeomorphism we are looking at. If said Hamiltonian diffeomorphism is non-degenerate (a generic condition), the generating function turns out to be Morse, and we can construct its Morse complex. The value of the Morse function at a critical point is called the ``action'' of the associated fixed point of the diffeomorphism, and the action gives rise to a tautological filtration on the Morse complex, the famous action filtration. This filtration which may in turn be used to prove results, for instance, about the Hofer metric.

\begin{defn}
Let $\varphi\in \Ham_c(M)$ be a Hamiltonian diffeomorphism. Its Hofer norm is defined to be\begin{equation*}
    \norm{\varphi}:=\inf_{H\vert \phi^1_H=\phi}\int_0^1 \left\{\max_{x\in M}H_t(x)-\min_{x\in M}H_t(x)\right\}dt.  
\end{equation*}  
\end{defn}

In this paper we show how one can encode further information in the Morse complex of a generating function. In particular, we shall associate to a collection of $k$ distinct critical points a braid, defined to be a collection of Hamiltonian loops based at the associated fixed points of the diffeomorphism. When $k=2$, braids are entirely classified by their linking number. We prove the following Theorem.

\begin{thm}\label{thm:A}
    Let $\varphi$ be a compactly supported Hamiltonian diffeomorphism of the plane with its standard symplectic form $dx\wedge dy$. Assume $\varphi$ is non degenerate on the interior of its support. Then for any generating function quadratic at infinity $S: \R^2\times \R^k\rightarrow \R$ for $\varphi$, there exists a non degenerate quadratic form $Q$ on $\R^l$, a Riemannian metric $g$ on $\R^{2+k+l}$ such that the pair $(S\oplus Q, g)$ is both Palais-Smale and Morse-Smale, and making (an extension of) the function
    \begin{align*}
        I: CM(&S\oplus Q, g; \Z)\otimes CM(S\oplus Q, g; \Z)\rightarrow \Z,\\ &I(x\otimes y):=\begin{cases}
            \frac{1}{2}\mathrm{lk}(x, y) & x\neq y\\
            -\ceil*{\frac{CZ(x)}{2}} & x=y
        \end{cases}
    \end{align*}
    into an increasing filtration of the tensor complex. Here, $x$ and $y$ are both critical points of $S\oplus Q$, and their associated fixed points of $\varphi$.
\end{thm}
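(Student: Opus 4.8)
The plan is to check directly that the differential $\partial^{\otimes}=\partial\otimes\1\pm\1\otimes\partial$ of the tensor complex is non-increasing for $I$, which is exactly what it means for $I$ (extended to chains by taking the maximum of $I$ over generators with nonzero coefficient) to define an increasing filtration. The generators occurring in $\partial^{\otimes}(x\otimes y)$ are $x'\otimes y$ with $x'$ appearing in $\partial x$, and $x\otimes y'$ with $y'$ appearing in $\partial y$. Since $I$ and $\partial^{\otimes}$ are both symmetric under exchanging the two tensor factors, it suffices to show $I(x'\otimes y)\le I(x\otimes y)$ for every critical point $y$ whenever there is an isolated negative gradient trajectory of $(S\oplus Q,g)$ from $x$ to $x'$. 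Normalising $\CZ$ so that it equals the Morse index of $S\oplus Q$ up to an additive constant, such a trajectory forces $\CZ(x)$ and $\CZ(x')$ to differ by $1$, and I will fix conventions so that $\CZ(x)\ge\CZ(x')$, hence $\ceil*{\CZ(x)/2}\ge\ceil*{\CZ(x')/2}$. I then split into the cases (i) $y\notin\{x,x'\}$, (ii) $y=x$, (iii) $y=x'$.

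\textbf{Choice of $Q$, $g$, and geometric set-up.} By the uniqueness theorem for generating functions quadratic at infinity, after stabilising by a suitable non-degenerate quadratic form $Q$ on $\R^l$ one may assume that $S\oplus Q$ is, up to a fibre-preserving diffeomorphism and a constant, a Chaperon-type generating function associated with a decomposition of $\varphi$ into $C^1$-small pieces — precisely the class to which the version of Le Calvez's positivity of intersections recalled in the previous section applies. I would take a gradient-like metric $g_0$ adapted to this structure for which positivity of intersections holds and which is Euclidean at infinity, so that the Palais–Smale condition is automatic from quadraticity at infinity, and then perturb $g_0$ in the compact region to a Morse–Smale metric $g$ while retaining positivity (the relevant inequality is open, and in fact holds for every gradient-like vector field of this form). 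I then recall from the earlier construction that a critical point $x$ reads off a discrete orbit $z_0,\dots,z_{N-1}$ of the decomposition, hence a braid strand $\gamma_x\subset\R^2\times\sphere^1$; that $\mathrm{lk}(x,y)$ is twice the winding number of $\gamma_x-\gamma_y$ about the origin; and that a negative gradient trajectory from $x$ to $x'$ is a path $s\mapsto\Gamma_s$ in the space of discrete configurations running from $\gamma_x$ to $\gamma_{x'}$.

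\textbf{The three cases.} For (i) I apply positivity of intersections to $s\mapsto\Gamma_s$ against the fixed reference strand $\gamma_y$: each instant at which $\Gamma_s$ meets the configuration of $\gamma_y$ changes $\mathrm{lk}(\Gamma_s,\gamma_y)$ by a definite sign fixed by the direction of the gradient flow, so this linking number is non-increasing in $s$, and evaluating at $s=\pm\infty$ gives $\mathrm{lk}(x',y)\le\mathrm{lk}(x,y)$; there is no conflict with $x\ne x'$ being distinct fixed points of $\varphi$, as the intermediate $\Gamma_s$ are not orbits of $\varphi$ and may cross $\gamma_y$. For (ii) and (iii) the deforming object is anchored at one of its limits, so positivity must be supplemented by a local model: as $s\to-\infty$, $\Gamma_s$ is a small perturbation of $\gamma_x$ along the unstable manifold of $x$, i.e.\ along the negative eigenspace of $\mathrm{Hess}_x(S\oplus Q)$, and the generating-function dictionary between this Hessian and the linearised return map of $\varphi$ at $x$ (the Conley–Zehnder/Morse-index correspondence) identifies the winding number of $\Gamma_s$ about $\gamma_x$ for very negative $s$ with $-\ceil*{\CZ(x)/2}$; monotonicity of $\mathrm{lk}(\Gamma_s,\gamma_x)$ in $s$ then gives $\tfrac12\mathrm{lk}(x,x')\le-\ceil*{\CZ(x)/2}$, i.e.\ $I(x'\otimes x)\le I(x\otimes x)$. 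Running the same argument from the $x'$-end, where $\Gamma_s$ arrives along the stable manifold and the monotonicity reverses, yields $\tfrac12\mathrm{lk}(x,x')\ge-\ceil*{\CZ(x')/2}$, i.e.\ $I(x'\otimes x')\le I(x\otimes x')$, which is consistent with the previous bound since $\ceil*{\CZ(x)/2}\ge\ceil*{\CZ(x')/2}$. With (i)–(iii) established, $\partial^{\otimes}$ is non-increasing for $I$.

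\textbf{Main obstacle.} The hard part will be the diagonal cases (ii)–(iii): case (i) is essentially a black-box invocation of positivity of intersections, but (ii)–(iii) require a local analysis at a critical point that must produce \emph{exactly} $\ceil*{\CZ(\cdot)/2}$ — with the correct integer part and the correct factor $\tfrac12$ — out of the interplay between the signature of the Hessian, the Maslov/Conley–Zehnder index of the fixed point, and the winding of the nearby braid strand; getting the rounding and the signs right is the crux, and it is also where the precise form of the diagonal entry of $I$ is forced. A secondary technical nuisance will be arranging a single pair $(Q,g)$ that is simultaneously Palais–Smale, Morse–Smale, and compatible with positivity of intersections.
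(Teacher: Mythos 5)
Your overall strategy—reduce to a twist (Chaperon/Le~Calvez) generating function via Viterbo uniqueness, then check the filtration property by splitting into the off-diagonal case handled by positivity of intersections and a diagonal case handled by a local model at the critical point—is the same as the paper's. The set-up for $Q$, $g$ (Euclidean at infinity for Palais–Smale, perturbed on a compact set for Morse–Smale, positivity being open) also matches. However, your treatment of the diagonal cases (ii)–(iii), which you correctly identify as the crux, contains a real gap that makes the sketch incomplete.

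You write that, as $s\to-\infty$, ``the generating-function dictionary between this Hessian and the linearised return map of $\varphi$ at $x$ \ldots\ identifies the winding number of $\Gamma_s$ about $\gamma_x$ for very negative $s$ with $-\ceil*{\CZ(x)/2}$.'' This is not an identification but an inequality, and this is not a cosmetic slip. The trajectory arrives at $x'$ in a specific direction $v(\infty)\in T_{x'}W^s(x')$, and distinct directions in that subspace can have distinct values of $L$: the correct statement is that the value $L(v(\infty))$ of \emph{any} such arriving direction is bounded by $\floor*{n/2}-\floor*{\mathrm{Ind}_h(x')/2}$ (resp.\ with a ceiling for the other parity), and equality is attained only by extremal directions. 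Establishing this bound is where the genuine work is, and the dictionary you invoke is not by itself enough: you need (a) the fact that the normalised difference of the two flow lines converges to a vector tangent to the stable manifold (the paper's Lemma~\ref{lemma:vInfStable}, a dynamical estimate that already costs some analysis of the Morse chart), (b) Le~Calvez's dominated splitting $T_zE=\bigoplus_j E_j(z)$ with $\dim E_j(z)\in\{1,2\}$, and (c) the homothety law~\eqref{eq:homothety} linking the ordering of the $E_j$ by $L$-value to the ordering of Hessian eigenvalues. The explicit formula with the integer part then comes from the counting argument that fills the $E_j$ with Hessian eigenvectors starting from the top eigenvalue (the paper's Lemma~\ref{lem:filtrationDiag} and Corollary~\ref{cor:dynamicsL}). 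None of that machinery appears in your sketch, and without it there is no way to extract the precise floor/ceiling from the abstract ``dictionary.''

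Two smaller points. First, there is a sign inconsistency: with the conventions of the paper (where $\xi=-\nabla_g h$), Theorem~\ref{thm:LeCalLyapunov} asserts that $L$ \emph{increases} along pairs of negative gradient lines, whereas you claim the linking number is non-increasing and extend $I$ to chains via a maximum. Either you have reversed the flow direction, or you are implicitly using Le~Calvez's original sign convention for $\xi$; as written the inequalities $\mathrm{lk}(x',y)\le \mathrm{lk}(x,y)$ and $\tfrac12\mathrm{lk}(x,x')\le -\ceil*{\CZ(x)/2}$ point the wrong way and would not match the stated normalisation of $I$. Second, the reduction via Viterbo uniqueness should be phrased carefully: one cannot assume $S\oplus Q$ itself ``is'' a twist generating function, only that $(S\oplus Q_2)\circ\psi = h\oplus Q_1$ for some stabilisations $Q_1,Q_2$, a gauge equivalence $\psi$, and a twist generating function $h$, and the filtration is then transported through the resulting isomorphism of chain complexes. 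This is what the paper does and is a cleaner way to say what you mean.
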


The tools we use in the proof are what we call ``twist generating functions'', whose construction may for instance be found in \cite{leCalvez1999}. These generating functions have good properties with respect to linking numbers, properties that we shall later recall. In particular, they satisfy a version of the positivity of intersections for pseudo-holomorphic curves in dimension 4 (see \cite[Appendix E]{mcDSal16}). We are going to apply a Theorem of uniqueness for generating functions by Viterbo (see \cite{the99}, \cite{viterbo92}) to obtain the Theorem.

We then consider braids with an arbitrary number of strands. Studying the behaviour of continuation maps one can prove a (very slightly weaker version of a) Theorem by Alves and Meiwes (\cite[Theorem 3]{alvMei21}) using generating functions. The original proof uses instead properties of intersections between pseudoholomorphic curves. We postpone the definition of $\varepsilon$-isolated collection of orbits, the reader may assume it is just a condition on the actions of the components of the collection. A proper definition will be provided immediately before the proof of the Theorem, in Section \ref{sec:proofThmB}.

\begin{thm}\label{thm:B}
    Let $\varphi\in\mathrm{Ham}_c(\D)$ be non degenerate, and let $\{x_1,\dots, x_k\}$ be an $3\varepsilon$-isolated collection of fixed points of $\varphi$. Then there exists a positive $\delta=\delta(\varepsilon)$ such that, for every non-degenerate $\varphi'\in\mathrm{Ham}(\D)$ with $d_H(\varphi, \psi)<\delta$, there exists a collection of fixed points $\{y_1, \dots, y_k\}$ of $\varphi'$ such that
    \begin{equation*}
        b(x_1, \dots, x_k)=b(y_1, \dots, y_k).
    \end{equation*}
\end{thm}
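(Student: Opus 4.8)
The plan is to transport the continuation argument of Alves and Meiwes into the finite–dimensional generating–function setting, the new ingredient being the positivity of intersections for twist generating functions recalled above in place of positivity of intersections for pseudo–holomorphic curves. I expect the $3\varepsilon$-isolated hypothesis to say (as will be made precise in Section~\ref{sec:proofThmB}) that, writing $a_1,\dots,a_k$ for the symplectic actions of $x_1,\dots,x_k$, the $x_i$ are the only fixed points of $\varphi$ whose action lies in $\bigcup_i(a_i-3\varepsilon,a_i+3\varepsilon)$, so that in each $3\varepsilon$-window the generators of $CM(S)$ are exactly the $x_i$. First I would fix a twist generating function quadratic at infinity $S$ for $\varphi$ and a generating function $S'$ for a given $\varphi'$ with $d_H(\varphi,\varphi')<\delta$; by Viterbo's uniqueness theorem both may be stabilised to a common fibre bundle $\R^2\times\R^{k}\times\R^{l}$. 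The Hofer bound then provides continuation morphisms $\Psi\colon CM(S\oplus Q)\to CM(S'\oplus Q)$ and $\Psi'$ in the opposite direction, each shifting the action filtration by at most $\delta$, with $\Psi'\circ\Psi$ and $\Psi\circ\Psi'$ chain homotopic to the identity.

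Next I would choose $\delta<\varepsilon$ and localise everything in the action windows. Since $\Psi'\circ\Psi$ is homotopic to the identity and raises actions by less than $2\delta<2\varepsilon$, the part of $CM(S)$ supported in $\bigcup_i(a_i-\varepsilon,a_i+\varepsilon)$ is a homological retract of the part of $CM(S')$ computed in $\bigcup_i(a_i-2\varepsilon,a_i+2\varepsilon)$; by $3\varepsilon$-isolation the former is free on the classes of the $x_i$, hence $CM(S')$ must carry critical points in $\bigcup_i(a_i-2\varepsilon,a_i+2\varepsilon)$, and $\Psi$ distinguishes, for each $x_i$, one such generator $y_i$ — a fixed point of $\varphi'$ — with $\CZ(y_i)=\CZ(x_i)$. (When several of the $a_i$ coincide one groups the corresponding windows and uses that $\Psi$ is a filtered isomorphism onto its image there.)

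It then remains to prove $b(y_1,\dots,y_k)=b(x_1,\dots,x_k)$, and this is where I expect the real work to lie. The first move is to connect $\varphi$ to $\varphi'$ by the Hofer–short path $\varphi_s:=\phi^s_F\circ\varphi$, where $\phi^1_F=\varphi'\varphi^{-1}$ and $F$ has oscillation $<\delta$, lift it to a path $S_s$ of twist generating functions with the attendant continuation morphisms, and follow the window fixed points along it. If no bifurcation took place inside the windows, these fixed points would trace out a continuous family of $k$-tuples of distinct points from $\{x_i\}$ to $\{y_i\}$ and the theorem would follow at once, the braid type being locally constant along such a family with values in the discrete set $B_k/\mathrm{conj}$. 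In general bifurcations may occur in the windows, and to show that they cannot alter the braid I would transport the filtration $I$ of Theorem~\ref{thm:A}, together with an analogous filtration on the $k$-fold tensor complex, along the continuation and prove that its value on $x_1\otimes\cdots\otimes x_k$ is unchanged; here positivity of intersections for twist generating functions is indispensable, as it bounds a priori the linking numbers of fixed points confined to a common narrow action window in terms of their Conley–Zehnder indices, so that an orbit born and later annihilated inside such a window necessarily contributes trivially to the braiding of the $x_i$. The step I expect to be hardest is verifying that the invariants so transported pin down the entire conjugacy class in $B_k$, and not merely the pairwise linking numbers; granting this, the braid of the window orbits is independent of $s$, and comparing $s=0$ with $s=1$ gives the claim.
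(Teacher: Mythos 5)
Your opening moves match the paper's: fix a twist generating function for $\varphi$, use the Hofer--Lipschitz estimate (Lemma~\ref{lem:morseConHoferLeC}) to find $h'$ for $\varphi'$ that is $\mathscr{C}^0$-close, work in the narrow action windows singled out by $3\varepsilon$-isolation (where the differentials vanish), and obtain continuation maps whose composition is the identity on the window complex. That part is sound and is essentially what the paper does.

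The gap is exactly where you flag it, but it is more serious than a ``hardest step'': the strategy of transporting $I$ along a Hofer-short path of generating functions and hoping that bounded pairwise linking numbers ``pin down the entire conjugacy class in $B_k$'' cannot work as stated, because for $k\geq 3$ the pairwise linking numbers do \emph{not} determine a pure braid up to conjugacy. The paper never tries to recover the braid from numerical invariants. Instead it observes (Section~\ref{sec:pointsEBraidsLeC}) that Le Calvez's map $\gamma$ turns a $k$-tuple of continuation gradient lines into an explicit braid \emph{cobordism} from $b(x_1,\dots,x_k)$ to $b(y_1,\dots,y_k)$, and then shows this cobordism is in fact a braid \emph{isotopy}. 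The mechanism is: by the Lyapunov property the function $L$ is non-decreasing along any pair of gradient lines of the continuation data; using the combinatorial statement of Alves--Meiwes (existence of an injective $\mathfrak{f}_\kappa$ and a bijection $\mathfrak{g}_\kappa$) one iterates $\hat G\circ G$ a finite number of times to close a cycle of gradient lines starting and ending at the same $x_i$'s, forcing all the intermediate $I$-inequalities to be equalities; constancy of $L$ along each pair of gradient lines means the two strands never cross, i.e.\ the cobordism is an isotopy on that pair, and then on all pairs simultaneously. There is no need to classify braids by invariants — one exhibits the isotopy directly. Your proposal lacks both the iteration trick that produces the needed equalities and the geometric reading of gradient lines as braid cobordisms; without these the argument does not close.

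A second, smaller issue: your proposal does not engage with the combinatorial lemma from the Appendix of \cite{alvMei21} (the maps $\mathfrak{f}_\kappa$, $\mathfrak{g}_\kappa$). It is what guarantees that for each $x_i$ a \emph{distinct} target generator $\sigma^\kappa_{\mathfrak{f}_\kappa(i)}$ ``appears'' in $G(x_i)$, which is needed both to define the $y_i$'s as a $k$-tuple of distinct fixed points and to set up the cycle of gradient lines; deducing the existence of $y_i$ merely from $\hat G\circ G$ being the identity on homology, as your draft does, is not enough to fix a choice of gradient lines to feed into Le Calvez's braid interpretation.
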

\begin{rem}
    The roles of $\varphi$ and $\psi$ are not symmetric, since we only make the isolation assumption on the orbits of $\varphi$.
\end{rem}
\begin{rem}
Our Theorem \ref{thm:B} is weaker than Alves and Meiwes's result in the following sense: they have a better estimate on the Hofer distance the braids persist for; moreover, they prove the result on all surfaces. We think an extension via generating functions to all closed surfaces except for the sphere is feasible.
\end{rem}

Their proof, which we can translate in our language almost word for word, relies on some simple energy estimates by which we can show that the continuation map from the Morse complex of $\varphi$ to that of $\varphi'$ is a deformation retract. This fact is in fact enough to conclude.

From the previous Theorem,  Alves and Meiwes are able to prove the following result:
\begin{cor}
    The topological entropy
    \begin{equation*}
        h_{\mathrm{top}}:\mathrm{Ham}_c(\D)\rightarrow [0,+\infty]
    \end{equation*}
    is a lower semicontinuous function with respect to the Hofer metric.
\end{cor}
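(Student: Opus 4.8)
The plan is as follows. We prove lower semicontinuity at an arbitrary $\varphi_0\in\mathrm{Ham}_c(\D)$: since $h_{\mathrm{top}}\ge 0$ the statement is vacuous when $h_{\mathrm{top}}(\varphi_0)=0$, so assume $h_{\mathrm{top}}(\varphi_0)>0$ (it is also finite, $\varphi_0$ being a smooth diffeomorphism) and fix $c$ with $0\le c<h_{\mathrm{top}}(\varphi_0)$. We will exhibit $r>0$ with $h_{\mathrm{top}}>c$ on the Hofer ball $\{\psi:\ d_H(\varphi_0,\psi)<r\}$; letting $c\uparrow h_{\mathrm{top}}(\varphi_0)$ then gives the claim. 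The strategy, due to Alves and Meiwes, is to detect the entropy of $\varphi_0$ by the braid type of a finite collection of \emph{periodic} orbits, turn that collection into an isolated collection of \emph{fixed} points of an iterate $\varphi_0^N$, propagate the braid by Theorem~\ref{thm:B}, and finally read off a lower bound for the entropy of the perturbed map from the forcing properties of the braid.

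\emph{Surface dynamics input.} By Katok's theorem (valid for $C^\infty$ surface diffeomorphisms with positive entropy), $\varphi_0$ possesses a hyperbolic horseshoe $\Lambda$ with $h_{\mathrm{top}}(\varphi_0|_\Lambda)>c$, and by the theory of braid forcing for surface homeomorphisms (Thurston--Nielsen theory, Boyland) one can choose an integer $N$ and a finite $\varphi_0^N$-invariant set $P\subset\mathrm{int}(\D)$ consisting of hyperbolic fixed points of $\varphi_0^N$, whose (pure) braid type $\beta$ satisfies $\tfrac{1}{N} h(\beta)>c$; here $h(\beta)$ is the topological entropy of the Thurston--Nielsen representative of $\beta$, and every homeomorphism of $\D$ admitting a finite invariant set of braid type $\beta$ has topological entropy $\ge h(\beta)$. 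Since non-degeneracy of $(\cdot)^N$ is $C^\infty$-generic while a hyperbolic horseshoe together with the braids of its periodic orbits is $C^1$-robust, I may replace $\varphi_0$ by some $\varphi_0'$ with $d_H(\varphi_0,\varphi_0')$ as small as desired such that $(\varphi_0')^N$ is non-degenerate and still carries a collection $P'$ of fixed points of braid type $\beta$. As $(\varphi_0')^N$ is non-degenerate its action spectrum is finite, so the finite collection $P'$ is $3\varepsilon$-isolated for all sufficiently small $\varepsilon>0$; fix such an $\varepsilon$.

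\emph{Propagation and conclusion.} Apply Theorem~\ref{thm:B} to $(\varphi_0')^N\in\mathrm{Ham}_c(\D)$ and the collection $P'$ to obtain $\delta=\delta(\varepsilon)>0$; arrange $d_H(\varphi_0,\varphi_0')<\delta/(2N)$ above. For the (conjugation-invariant) Hofer norm one has $d_H(\alpha^N,\gamma^N)\le N\,d_H(\alpha,\gamma)$, so if $d_H(\varphi_0,\psi)<\delta/(2N)$ then $d_H\big((\varphi_0')^N,\psi^N\big)\le N\,d_H(\varphi_0',\varphi_0)+N\,d_H(\varphi_0,\psi)<\delta$. If $\psi^N$ is non-degenerate, Theorem~\ref{thm:B} provides a finite invariant set of $\psi^N$ of braid type $\beta$, whence $h_{\mathrm{top}}(\psi^N)\ge h(\beta)$ and $h_{\mathrm{top}}(\psi)=\tfrac{1}{N} h_{\mathrm{top}}(\psi^N)\ge\tfrac{1}{N} h(\beta)>c$. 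If $\psi^N$ is degenerate, choose Hamiltonian perturbations $\psi_m=\phi^1_{H_m}\circ\psi$ with $H_m\to0$ in $C^\infty$ and $\psi_m^N$ non-degenerate (a generic condition, also for iterates); then $\psi_m\to\psi$ in $C^\infty$ and $d_H(\psi_m,\psi)\to0$, so $\psi_m$ lies in the ball for $m$ large, the previous case gives $h_{\mathrm{top}}(\psi_m)>c$, and upper semicontinuity of $h_{\mathrm{top}}$ in the $C^\infty$ topology (Yomdin) yields $h_{\mathrm{top}}(\psi)\ge\limsup_m h_{\mathrm{top}}(\psi_m)\ge c$. Hence $h_{\mathrm{top}}>c$ on $\{\psi:\ d_H(\varphi_0,\psi)<\delta/(2N)\}$, as required.

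\emph{Main obstacle.} The generating-function content is entirely confined to Theorem~\ref{thm:B}; the substantive part here is the imported surface dynamics, namely the step that converts the numerical inequality $h_{\mathrm{top}}(\varphi_0)>c$ into a concrete finite collection of hyperbolic periodic orbits whose braid forces entropy $>c$, together with the $C^1$-robustness of that picture — this rests on Katok's horseshoe theorem and on braid-forcing estimates inside horseshoes. The remaining issues are bookkeeping rather than conceptual: relating braids of periodic orbits of $\varphi_0$ to braids of fixed points of $\varphi_0^N$ with the correct normalization of forced entropy (the factor $N$); removing the non-degeneracy hypothesis on the perturbed map via Yomdin's semicontinuity theorem; and checking that the braid used in Theorem~\ref{thm:B} (Hamiltonian loops based at fixed points) coincides with the mapping-class/Thurston--Nielsen braid appearing in the forcing inequality, which is the standard identification of $\mathrm{Ham}$-loops of a collection of fixed points with elements of the surface braid group.
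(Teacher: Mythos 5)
Your plan is the Alves--Meiwes route that the paper's proof of this Corollary (deferred to \cite[Section 2.1]{alvMei21}) also follows, and the surface-dynamics input (Katok horseshoes, Thurston--Nielsen forcing, Yomdin upper semicontinuity, the estimate $d_H(\alpha^N,\gamma^N)\le N\,d_H(\alpha,\gamma)$) is correctly identified. There is, however, a genuine gap at exactly the point the paper flags in Remark~\ref{rem:techRem}, namely the handling of a degenerate $\varphi_0$.

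The gap is a circular dependence in your ``arrange $d_H(\varphi_0,\varphi_0')<\delta/(2N)$ above.'' You pick $\varphi_0'$, then extract $\varepsilon=\varepsilon(\varphi_0')$ from the finiteness of $\mathrm{Spec}((\varphi_0')^N)$, then obtain $\delta=\delta(\varepsilon)$ from Theorem~\ref{thm:B} applied to $(\varphi_0')^N$, and then ask that $\varphi_0'$ lie within $\delta/(2N)$ of $\varphi_0$. But if you re-choose $\varphi_0'$ closer to $\varphi_0$, the admissible $\varepsilon$ can shrink (other orbits of $(\varphi_0')^N$ may drift toward the actions of $P'$ as $\varphi_0'\to\varphi_0$), and with it $\delta$; nothing in your argument rules out $\delta(\varepsilon(\varphi_0'))\le 2N\,d_H(\varphi_0,\varphi_0')$ for every admissible choice, in which case the ball you produce is empty. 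What is needed — and what Alves--Meiwes's Theorem~2.5 provides, with the Morse-theoretic version sketched in Remark~\ref{rem:techRem} — is a lower bound $\varepsilon_0>0$ for the isolation parameter that depends only on the non-degenerate orbits $P$ of $\varphi_0^N$ (via the Morse lemma / an energy gap for trajectories asymptotic to them), not on the perturbation $\varphi_0'$. With such a uniform $\varepsilon_0$ in hand, $\delta_0=\delta(\varepsilon_0)$ is likewise uniform, the circularity disappears, and your chain of estimates closes. As written, your proof implicitly assumes this uniformity; you should either cite \cite[Theorem~2.5]{alvMei21} directly, or reproduce the energy-gap argument of Remark~\ref{rem:techRem}, to justify the ``arrange'' step.

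A smaller remark: Theorem~\ref{thm:B} is stated with $\delta=\delta(\varepsilon)$, but its proof passes through Lemma~\ref{lem:morseConHoferLeC}, whose $\delta$ is produced ``given $\varphi$.'' For your argument you need this $\delta$ to be uniform over the family $(\varphi_0')^N$ as $\varphi_0'$ ranges over a $C^\infty$-neighbourhood of $\varphi_0$. Tracing the proof of Lemma~\ref{lem:morseConHoferLeC} (bi-invariance of the Hofer metric, so effectively comparing to the identity) shows $\delta$ can indeed be taken to depend only on $\varepsilon$, but since your argument is sensitive to this you should say so explicitly.
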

We shall not recall the proof here, we refer to \cite[Section 2.1]{alvMei21} for it.
\begin{rem}\label{rem:techRem}
    We need to mention here (and we thank the referee for highlighting this point) that from Theorem \ref{thm:B} one can directly only prove that non-degenerate Hamiltonian diffeomorphisms are points of lower-semicontinuity for the topological entropy. The proof of its lower-semicontinuity at any Hamiltonian diffeomorphisms hinges instead on a slight improvement of Theorem \ref{thm:B} which encompasses the case where the braids are formed by non-degenerate orbits, the Hamiltonian diffeomorphism not being assumed to be non-degenerate itself. This is \cite[Theorem 2.5]{alvMei21}, which is proved in \cite[Section 8]{alvMei21}. The proof of the main proposition needed in the proof of \cite[Theorem 2.5]{alvMei21}, Proposition 8.2 of the same paper, may also be given by means of Morse theory. In essence, one wants to approximate a degenerate Hamiltonian diffeomorphism $\varphi$ by a non-degenerate one $\varphi'$, which coincides with $\varphi$ on a small neighbourhood of the non-degenerate fixed points $x_1, \dots, x_k$ constituting the given braid. In order to conclude that \cite[Theorem 2.5]{alvMei21} holds it is then necessary to prove that these fixed points are $\varepsilon$-quasi-isolated (a variation of the property in Definition \ref{defn:epsIsol}) for $\varphi'$. Alves and Meiwes do so by showing an isolation property for $\{x_i\}_{i=1, \dots, k}$: any Floer trajectory positively or negatively asymptotic to one of the $x_i$ cannot have energy smaller than $\varepsilon$. In the context of Morse theory an analogous conclusion may be inferred by a simple application of the Morse lemma. Denote $y_1, \dots, y_k$ the critical points of the generating function $h$ of $\varphi'$ which correspond to $x_1, \dots, x_k$. They are in fact non-degenerate, so fix for each of them a Morse neighbourhood $U_i\ni y_i$. Define for each $y_i$ the stable manifold of $y_i$ for $-\nabla h$, $W^s_{-\nabla h}(y_i)$(resp. the unstable manifold of $y_i$ for $-\nabla h$, $W^u_{-\nabla h}(y_i)$). Consider then\begin{equation*}
        U_i^+:=U_i\cap W^s_{-\nabla h}(y_i),\, U_i^-:=U_i\cap W^u_{-\nabla h}(y_i),
    \end{equation*}and let $c_i:=h(y_i)$. By Morse Lemma there exists $\varepsilon_i>0$ such that $ S_i^{\pm\varepsilon_i}:=h^{-1}(c_i\pm \varepsilon_i)\cap U_i^\pm$ is a sphere\footnote{It may technically be empty, if we are in the presence of a relative maximum or minimum. If that is the case, the whole proof works anyway.}, and since such $\varepsilon_i$ are finitely many we may simply take the smallest one, $\varepsilon>0$, satisfying this property for all $y_i$ and $U_i^\pm$. Now, a negative gradient line $\gamma$ which is positively (resp. negatively) asymptotic to $y_i$ has to intersect $S_i^{\varepsilon}$ (resp. $S_i^{-\varepsilon}$): $\gamma$ is not contained in $U_i$ but is fully contained in $W^s_{-\nabla h}(y_i)$ (resp. $W^u_{-\nabla h}(y_i)$) by definition, and to conclude the argument then appeal to the Jordan-Brouwer Separation Theorem, each (un)stable manifold being homeomorphic to an Euclidean space. Since negative gradient trajectories have to intersect such spheres, we obtain the isolation property by the monotonicity of $h$ on trajectories of $-\nabla h$ and the fact that by definition the energy of a negative gradient trajectory is nothing but the difference of the values of $h$ at the positive and negative limit.

    Once the separation property is proved, one can follow the remainder of the proof of Corollary 1.3 as in \cite{alvMei21}.
\end{rem}
\vspace{0.5 cm}

The structure of the paper is as follows: in Section \ref{sec:prerequisites} we report standard facts about braids, twist maps and generating functions quadratic at infinity, and we set up in detail the Morse theory we need (again, this is classical material). In Section \ref{sec:leCGF} we recall the construction of twist generating functions, we give a braid-theoretical interpretation of the points of the vector space on which such generating functions are defined, and we show that twist generating functions may be seen, after some elementary operations, as GFQIs. In Section \ref{sec:definitionFiltration} we recall the properties of a dominated splitting associated to a twist generating function, and we use this to prove Theorem \ref{thm:A}. Theorem \ref{thm:B} is proved in the same section, relying on a Hofer-continuity folklore result for generating functions that we prove. In the Appendix we define a filtration as in Theorem \ref{thm:A} on the Floer complex of a non-degenerate Hamiltonian diffeomorphism of a surface using material from \cite{hwz98}, and we show that the result agrees with our definition using generating functions.

\subsection{Relations to other work}
A fundamental work we would like to mention is the one by Connery-Grigg \cite{conn21}. He used the machinery provided by Hofer-Wysocki-Zehnder \cite{hwz98} and Siefring \cite{sie08},\cite{sie11} to deeply analyse the geometry of the Floer complex. On the one hand he manages to construct singular foliations obtained from the Floer differential (this is connected to Le Calvez's work on transverse foliations), and on the other he finds a topological characterisation of the generators of the Floer complex representing the fundamental class, generalising previous work by Humilière, Le Roux and Seyfaddini \cite{hlrs2016}. Most of the technical lemmata in \cite[Section 3.2.1]{conn21} may be cast in the language of generating functions, and the proofs adapted with ease within the Morse-theoretical context we provide, building on the results by Le Calvez.
\vspace{0.2 cm}

We wish to mention as well that a related linking filtration exists in Embedded Contact Homology and the related theory of PFH. It measures linking number with respect to a fixed Reeb orbit. So far, to the best of the author's knowledge, it has found a few applications: see for instance the paper where Hutchings introduced it \cite{hut16} and other works by Nelson and Weiler \cite{nelWei23}\cite{nelWei24}. In particular, Hutchings's result has been re-proved by Le Calvez using generating functions, see \cite{leC23} and its recent generalisation, due to Bramham and Pirnapasov \cite{braPir24}.
\vspace{0.25 cm}

\paragraph{Acknowledgments}
This text grows out of the authour's doctoral dissertation. He would like to thank his adviser, Vincent Humilière, for his constant encouragement and questions; Alberto Abbondandolo and Vincent Colin for refereeing the dissertation; Marcelo Alves, Matthias Meiwes and Dustin Connery-Grigg for their interest. He also thanks an anonymous referee for highlighting a missing technical point, now explained in Remark \ref{rem:techRem}, and Marcelo Alves again for pointing out an imprecision in the definition of continuation maps.

\section{Technical prerequisites}\label{sec:prerequisites}
\subsection{Braid groups}\label{sec:braids}
Braid groups are classical objects in group theory. Typical references on the subject include the book \cite{kasTur08} and the paper \cite{gon11}. The name of the group was coined by Emil Artin in \cite{art25}, where he gave an abstract definition in terms of generators and relations: for $k\geq 2$,
\begin{equation}\label{eq:braidPres}
\mathcal{B}_k = \left \langle \sigma_1, \dots, \sigma_{k-1} \middle\vert
\begin{aligned}
  |i-j| > 1\Rightarrow&\, \sigma_i\sigma_j  = \sigma_j\sigma_i  \\
  1 \leqslant i\leqslant k-2\Rightarrow&\,\sigma_i\sigma_{i+1} \sigma_i  = \sigma_{i+1} \sigma_i\sigma_{i+1}
\end{aligned}
\right\rangle.
\end{equation}
There are more geometrical ways of seeing the braid groups, the one we are going to use in the present paper being the following one. The permutation group on $k$ letters $\mathfrak{S}_k$ admits a faithful action on the complement in $\D^k$ of the set
\begin{equation*}
    \tilde\Delta=\Set{(x_1, \dots, x_k)\in\D^k\vert \exists i\neq j, x_i=x_j}
\end{equation*}
and we call the quotient
\[
\mathrm{Conf}^k(\D):=(\D^k\setminus\tilde\Delta)/\mathfrak{S}_k
\]
the configuration space of $k$ unordered points of the disc. It turns out that $\pi_1(\text{Conf
}^k(\D))=\mathcal{B}_k$. This means essentially that after choosing $k$ base points $p_1, \dots, p_k$ on $\D$, one can define an element of $\mathcal{B}_k$ uniquely as the homotopy class of a path $\gamma: [0, 1]\rightarrow\D^k\setminus \tilde\Delta$ such that there is $\sigma\in\mathfrak{S}_k$ verifying $\gamma(1)=\sigma \gamma(0)$, and that any braid may be realised this way. The multiplication in $\mathcal{B}_k$ then corresponds to concatenation of braids the obvious way.

There is a quotient homomorphism $\mathcal{B}_k\rightarrow \mathfrak{S}_k$, mapping positive and negative generators to the same transposition, or equivalently identifying $\sigma_i$ with $\sigma_i^{-1}$. Its kernel is called $P_k$, the set of pure braids. One can visualise pure braids as braids such that, following the strands, bring each basepoint on itself.

In the following sections of this work we shall need the definition of linking number\footnote{In the low-dimensional topology literature it may also be called ``exponent''.} \[
\mathrm{lk}: \mathcal{B}_k\rightarrow\Z
\]
which is the only morphism of groups whose value on all the generators $\sigma_i$ is 1. Remark that this convention, very natural from the algebraic viewpoint, requires a different normalisation from the standard one in topology: choosing basepoints $0$ and $\frac{1}{2}$ in $\D$, the braid represented by $t\mapsto [0, \frac{1}{2}\exp\left(2\pi i t\right)]$ has linking number 2 according to our definition, while it is customarily used as example of loop with linking number 1 in geometric settings.

Another possible definition for the linking number, which only applies to pure braids, is the following (see \cite{conn21} for a deeper explanation, including the case of capped braids). Consider a lift to $\D^k$ of a pure braid with $k$ strands $b=[\gamma_1,\dots, \gamma_k]$, be it $\tilde b$, and take any homotopy
\begin{equation*}
    h=(h_1,\dots, h_k):[0, 1]\times[0, 1]\rightarrow \D^k
\end{equation*}
starting at the trivial braid at the basepoints (each strand is constant there) and ending in $\tilde b$. One can prove that
\begin{equation}\label{eq:lkHomotopyIntersections}
    \mathrm{lk}(b)=\sum_{i\neq  j}(h_i\pitchfork h_j)
\end{equation}
where on the right we count intersections with signs.
\begin{rem}
    Each transverse intersection in the count corresponds to a linking difference of 2, due to our normalisation $\mathrm{lk}(\sigma_j)=1$. With the more usual convention one considers the sum indexed on $i< j$ or divides the sum above by 2.
\end{rem}
By homotopy invariance of the intersection product, this quantity does not depend on the choice of $(h_i)$; two different lifts of $b$ being connected by a permutation of the strands, this definition does not depend on the choice of $\tilde b$ either. The linking number then quantifies how far a braid is from being trivial somehow, quantifying the failure of any homotopy $h$ as above to be a homotopy between the trivial braid to $\tilde b$ through braids.

\begin{defn}
    A homotopy $h: [0, 1]\times [0,1]\rightarrow \D^k$ between the lifts of two braids is called a braid cobordism. If its image is contained outside the fat diagonal of $\D^k$, we say that $h$ is a braid isotopy.
\end{defn}

\subsection{Twist Maps and their Generating Functions}\label{sec:twistMaps}
The content of this section is classical: a good introductory reference may be found for instance in \cite{sib04}.

\begin{defn}
\label{defn:twistMap}
    Let $\phi:\R^2\rightarrow \R^2$ be an orientation-preserving diffeomorphism. We say that $\phi$ is a twist map if, denoting $\phi(x_0, y_0)=(x_1, y_1)$, we have the inequality\begin{equation*}
        \frac{\partial x_1}{\partial y_0}>0.
    \end{equation*}
\end{defn}
Geometrically, this may be visualised as follows: given any vertical line of constant $x$ coordinate, it is sent to the graph of a strictly increasing function. The definition may be extended to diffeomorphisms realising the inequality $\frac{\partial x_1}{\partial y_0}<0$, in which case vertical lines are mapped to graphs of strictly decreasing functions.
\begin{rem}
    This interpretation justifies the French denomination of twist maps: they are called \textit{applications déviant la verticale}, literally ``functions deviating the vertical''. The vertical will be deviated \textit{à droite} (``to the right'') or \textit{à gauche} (``to the left'') in respectively the first and second case.
\end{rem}
\begin{es}
    The prototypical example of a twist map is the Dehn twist
    \begin{equation*}
        D: (x, y)\mapsto (x+y, y).
    \end{equation*}
    It is apparent that such a diffeomorphism is a twist map (and in fact it is \textit{déviant la verticale à droite}). The clockwise rotation
    \begin{equation*}
        R:(x, y)\mapsto (y, -x)
    \end{equation*}
    is also a twist map, again \textit{à droite}.
\end{es}
Let us now moreover assume that the twist map $\phi$ is symplectic. In such a case, there exists what is called a generating function for $\phi$: it is a map $h:\R^2\rightarrow \R$ such that
\begin{equation}\label{eq:twistEq}
    \forall (x, y), (x', y')\in\R^2,\,\phi(x, y)=(x', y')\Leftrightarrow\begin{cases}
        y=-\partial_1 h(x, x')\\
        y'=\partial_2 h(x, x')
    \end{cases}.
\end{equation}

\begin{es}
    An example of generating function for the Dehn twist above is\begin{equation}
        \R\times\R\rightarrow \R,\,\,\, (x, x')\mapsto \frac{1}{2}(x'-x)^2
    \end{equation}
    while the clockwise rotation is generated by 
    \begin{equation}
    \R\times\R\rightarrow \R,\,\,\, (x, x')\mapsto -xx' . 
    \end{equation}
\end{es}
\begin{nota}
    A generating function as above will be referred to as ``twist generating function''.
\end{nota}

\subsection{Generating functions quadratic at infinity}\label{subsec:genFuns}
\subsubsection{Basic definitions and properties}
The first construction we would like to introduce is the one of generating functions, used to study Lagrangian intersections in cotangent bundles.

In the following, $M$ is going to be a closed manifold or $\R^{2n}$. Endow $T^*M$ with its canonical exact symplectic structure.

\begin{defn}[Generating functions]\label{defn:GF}
    Let $L\subset T^*M$ be an exact Lagrangian submanifold. We say that $S:E\rightarrow\R$ is a generating function for $L$ if:\begin{itemize}
        \item[\textit{i})] $\pi: E\rightarrow M$ is a real vector bundle on $M$;
        \item[\textit{ii})] $S$ is \textit{vertically transverse} to 0. By this we mean that we define the vertical differential of $S$, let us denote it by \begin{equation*}
            \partial^V S: E\rightarrow (\ker d\pi)^*
        \end{equation*}
        and we ask that $\partial^V S\pitchfork 0$.
        \item[\textit{iii})] $L$ is the image of the following Lagrangian immersion. Let $\Lambda_S:=\partial^V S^{-1}(0)$ (it is a submanifold of $E$ by previous point), and define the Lagrangian immersion by
        \begin{equation*}
            \iota_S: \Lambda_S\rightarrow T^*M,\,\,\, (x, \xi)\mapsto (x, \partial^H S(x, \xi))
        .\end{equation*}
    \end{itemize}
    In the third point, $\partial^H S(x, \xi)\in T^*M$ is defined as
    \begin{equation*}
        (x, v)\mapsto d_{(x, \xi)}S.v', \text{ for any } v'\in T_{(x, \xi)}E, d_{x, \xi}\pi.v'=v.
    \end{equation*}
\end{defn}

By definition, the critical set of a generating function is in bijection with the intersection points between such Lagrangian and the zero section of $T^*M$, so if we want to bound the cardinality of the latter we are naturally lead to do Morse theory on a generating function. A vector bundle is however never a compact manifold, so we need to work within a class of generating functions which satisfy the Palais-Smale compactness condition.
\begin{defn}\label{defn:QI}
    A generating function $S:E\rightarrow \R$ is said to be quadratic at infinity (and we shall write GFQI for it) if there exists a function $Q:E\rightarrow\R$ which when restricted to the fibres is a non degenerate quadratic form, and such that $S-Q$ is supported on a compact set.
\end{defn}

\begin{rem}
    This definition may in fact be relaxed a bit (see \cite{the99}), but we will not need the extended definition.
\end{rem}

Given a GFQI, $S$, we denote by $\sigma(S)$ its signature, defined to be the signature of the quadratic form at infinity (maximal dimension of a subspace on which it is negative definite).

To answer our intersection-counting question via Morse theory, we need an existence statement: as of now, we are not sure for what class of Lagrangian submanifolds one can construct generating functions, if they ever exist at all.

\begin{thm}[Sikorav '87 \cite{sik87}]
    Let us assume that $L\subset T^*M$ admits a generating function: then if $\varphi\in\Ham(T^*M)$, so does $\varphi(L)$.
\end{thm}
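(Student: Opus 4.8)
The plan is to show that the property of admitting a generating function (quadratic at infinity, in the relevant setting) is preserved under the action of a Hamiltonian isotopy, by building a generating function for $\varphi_t(L)$ out of a generating function for $L$ together with a generating function for the graph of $\varphi_t$. First I would reduce to the case of a time-dependent Hamiltonian $H_t$ generating $\varphi = \varphi^1_H$, and set $L_t := \varphi^t_H(L)$, so that $L_0 = L$ and $L_1 = \varphi(L)$. The key local input is that for $t$ small the symplectomorphism $\varphi^t_H$ of $T^*M$ is $\mathscr{C}^1$-close to the identity, hence its graph (suitably viewed inside $T^*(M\times M)$ with a twisted symplectic form) is a Lagrangian section, and thus admits a generating function $F_t$ on $M\times M$ itself --- indeed one that is quadratic at infinity (in fact fiberwise-compactly-supported-plus-zero) because $\varphi^t_H$ is compactly supported near $L$ / or because one works in $\R^{2n}$. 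This is the standard ``generating function for a symplectomorphism $\mathscr{C}^1$-close to identity'' construction.

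The second step is the \emph{composition formula}: if $S : E \to \R$ generates $L \subset T^*M$ and $F : M \times M \to \R$ generates (the graph of) a symplectomorphism $\psi$, then the function
\begin{equation*}
    (S \# F)(x, \xi, x') := S(x, \xi) + F(x, x')
\end{equation*}
defined on the pulled-back bundle $E \times_M M \to M$ (fiber coordinates $(\xi, x)$, base coordinate $x'$) is, after checking vertical transversality, a generating function for $\psi(L)$. One verifies this by writing out the critical point equations: a fiber-critical point forces $\partial_\xi S = 0$ (so $(x,\xi) \in \Lambda_S$) and $\partial_x S + \partial_x F = 0$, which is exactly the matching condition saying the covector produced by $S$ at $x$ equals the covector $\psi$ sends to $x'$; the horizontal derivative in $x'$ then reads off the image covector. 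So $\Lambda_{S\#F}$ parametrizes $\psi(L)$, and the quadratic-at-infinity property is inherited because both summands are quadratic at infinity (a direct sum of nondegenerate fiberwise quadratic forms is again one). Applying this with $\psi = \varphi^t_H$ for $t$ in a small interval gives that $L_t$ admits a GFQI for all such $t$.

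The final step is a continuation / open-closed argument on the interval $[0,1]$: let $A := \{ t \in [0,1] : L_t \text{ admits a generating function (quadratic at infinity)} \}$. It contains $0$ by hypothesis. The composition step above shows $A$ is \emph{open}: if $t_0 \in A$ with generating function $S_{t_0}$, then for $|t - t_0|$ small, $L_t = \varphi^t_H \circ (\varphi^{t_0}_H)^{-1}(L_{t_0})$ and the connecting symplectomorphism is $\mathscr{C}^1$-small, so it has a generating function $F$ on $M\times M$ and $S_{t_0} \# F$ generates $L_t$. The same argument run backwards (using that $\varphi^{t_0}_H \circ (\varphi^t_H)^{-1}$ is also $\mathscr{C}^1$-small) shows $A$ is \emph{closed}; by connectedness $A = [0,1]$, so $1 \in A$ and $\varphi(L)$ admits a generating function quadratic at infinity. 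The main obstacle I expect is not the topology of the continuation argument but the careful bookkeeping in the composition step: one must check vertical transversality of $S \# F$ (which uses transversality of $S$ together with the fact that $F$ generates an honest diffeomorphism, so its mixed Hessian $\partial^2_{x x'} F$ is invertible), and one must be careful with the non-compactness of $M = \R^{2n}$ and with the exact vs.\ merely closed distinction, ensuring the ``quadratic at infinity'' tail is genuinely preserved and the new fiber dimension (which grows by $\dim M$ at each step) causes no trouble. A clean way to control this is to note that the stabilization and the composition affect the generating function only up to the standard equivalence moves (fiber-preserving diffeomorphism and addition of a nondegenerate quadratic form), under which being a GFQI is manifestly stable.
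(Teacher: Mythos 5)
The paper does not give a proof of this theorem; it is quoted with a citation to Sikorav, and the only hint at a proof appears in the discussion of Brunella's Lemma (Lemma \ref{lem:bru}), which the paper says is ``proved essentially cutting a Hamiltonian isotopy between the identity and $\varphi$ into $\mathscr{C}^1$-small pieces. Each of these portions admits a GFQI, and one can glue them.'' Your argument is exactly this strategy made explicit: the composition formula $S\#F$ is the gluing step, and the open-closed argument on $[0,1]$ is a cosmetic repackaging of the finite-subdivision argument (openness together with compactness of the interval, via the Lebesgue number lemma, already gives the finite cover; your separate closedness step is sound but redundant).

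Two small notational points worth correcting. First, $E\times_M M$ is canonically just $E$; what you want is the trivial bundle $M\times E\to M$ whose base is the target copy of $M$ and whose fiber over $x'$ is all of $E$, so the fiber dimension of the composite indeed grows by $\dim M$ plus nothing (the previous fiber persists and one adds a copy of the previous base $M$). Second, the precise form of $F$ depends on the symplectic identification one uses to view $\mathrm{graph}(\psi)\subset \overline{T^*M}\times T^*M$ inside a cotangent bundle (the paper uses (\ref{eq:identCotDelta})); the sign conventions in the critical-point matching equation change accordingly, but the argument and the invertibility of the mixed Hessian $\partial^2_{xx'}F$ are robust under any such choice. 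With these cosmetic repairs, the proof is correct and in line with the standard approach the paper references.
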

\begin{rem}
    It is in fact possible to prove (see \cite{the99}) that the map from generating functions to Lagrangian submanifolds is an infinite dimensional smooth Serre fibration in a precise sense.
\end{rem}
The set of Lagrangians which admit generating functions is therefore closed under Hamiltonian isotopies. Since the zero section $0_M\subset T^*M$ is generated by any non degenerate (fibre-independent) quadratic form, at the very least any Hamiltonian deformation of the zero section admits a generating function.

Existence of a generating function alone however does not guarantee the possibility of counting intersections by means of Morse theory, as mentioned above. Moreover, it is not clear how different generating functions for the same Lagrangian submanifold are related to one another. We introduce then certain elementary operations using which, given a generating function, we may obtain new ones for the same Lagrangian submanifold.

\begin{defn}\label{defn:elOp}
    Assume $S:E\rightarrow \R$ is a generating function on $\pi: E\rightarrow M$ for the Lagrangian $L\subset T^*M$ We call the following operations elementary:\begin{itemize}
        \item[(\textit{Shift})] If $c\in \R$, $S+c$ is a new generating function for $L$, defined again on $E$;
        \item[(\textit{Gauge equivalence})] If $\phi$ is a self-diffeomorphism of $E$ which preserves the fibres of the projection $\pi$, \begin{equation*}
            S\circ \phi: E\rightarrow \R
        \end{equation*} is a new generating function for $L$, defined on the bundle \begin{equation*}
            \pi\circ\phi: E\rightarrow M.
        \end{equation*}
        \item[(\textit{Stabilisation})] If $Q: E'\rightarrow M$ is a non degenerate quadratic form on the vector bundle $\pi':E'\rightarrow M$, then\begin{equation*}
            S+Q:E\oplus E'\rightarrow \R
        \end{equation*}
        is a generating function for $L$, defined on\begin{equation*}
            \pi\oplus \pi': E\oplus E'\rightarrow M.
        \end{equation*}
    \end{itemize}
\end{defn}
\begin{rem}
    We may, without loss of generality, always assume to be adding trivial bundles for the (Stabilisation) operation. In fact, if $E'\rightarrow M$ is a vector bundle, there exists another vector bundle on $M$, say $E''$, such that $E'\oplus E''$ is trivial. Moreover, it is remarked in \cite{the99} that any non degenerate quadratic form is equivalent diffeomorphic by a gauge equivalence to one which does not depend on the point of the base, i.e. if the vector bundle $E$ is globally trivial, and $p, q\in B$, then $Q\vert_{\pi^{-1}(p)}=Q\vert_{\pi^{-1}(q)}$.
\end{rem}

We may now state Viterbo's Uniqueness Theorem:
\begin{thm}[Viterbo, \cite{viterbo92}]\label{thm:vitUniqueness}
A Lagrangian submanifold $L\subset T^*M$ is said to have the GFQI-uniqueness property if, given $S: E\rightarrow \R$, $S':E'\rightarrow \R$ generating functions for $L$, there exist two finite sequences of elementary operations taking both $S$ and $S'$ to the same $S'': M\times \R^N\rightarrow \R$, GFQI for $L$. Then:
\begin{itemize}
    \item[\textit{i})] If $L$ has the GFQI-uniqueness property, and $\varphi\in \Ham(T^*M)$, then so does $\varphi(L)$;
    \item[\textit{ii})] The zero section $0_M$ has the GFQI-uniqueness property.
\end{itemize}
\end{thm}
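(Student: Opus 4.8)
The plan is to treat the two items separately: item~(i) is a formal propagation argument, whereas item~(ii) is where the real work lies.

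\textbf{Item (i).} Suppose $L$ has the GFQI-uniqueness property and fix $\psi\in\Ham(T^*M)$. The mechanism is the composition (or ``reduction'') operation $*$ on generating functions. Factoring $\psi=\psi_N\circ\cdots\circ\psi_1$ with each $\psi_j$ $C^1$-close to the identity, the graph of each $\psi_j$, regarded inside $T^*(M\times M)$, is an exact section near the zero section and hence has a generating function $G_{\psi_j}\colon M\times M\to\R$ with no auxiliary fibre variables; set $G_\psi:=G_{\psi_1}*\cdots*G_{\psi_N}$, a generating function for the graph of $\psi$. The operation $S\mapsto S*G_\psi$ then carries a GFQI for $L$ to a GFQI for $\psi(L)$. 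I would use two folklore properties of $*$, both obtained by inspecting fibre critical points: (a) $S*G_\psi*G_{\psi^{-1}}$ is elementary-equivalent to $S$, since $G_\psi*G_{\psi^{-1}}$ is elementary-equivalent to a generating function of $\mathrm{id}_{T^*M}$, i.e.\ to a fibrewise non-degenerate quadratic form; and (b) $*G_\psi$ respects elementary equivalence. Granting these, if $S_1,S_2$ are GFQIs for $\psi(L)$ then $S_1*G_{\psi^{-1}}$ and $S_2*G_{\psi^{-1}}$ are GFQIs for $L$, hence elementary-equivalent by hypothesis; applying $*G_\psi$ and then (a) and (b) shows $S_1,S_2$ are elementary-equivalent. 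Thus $\psi(L)$ has the GFQI-uniqueness property.

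\textbf{Item (ii): setting up the normal form.} Let $S\colon E\to\R$ be a GFQI for $0_M$. Choosing $E'$ with $E\oplus E'$ trivial and stabilising $S$ by a fibre metric on $E'$, we may assume $E=M\times\R^N$. Because $\iota_S$ maps $\Lambda_S$ onto the zero section, both $\partial^VS$ and $\partial^HS$ vanish along $\Lambda_S$, so $\Lambda_S=\mathrm{Crit}(S)$ and the Lagrangian immersion $\iota_S$ is the zero-section embedding precomposed with $\pi|_{\Lambda_S}$; hence $\pi$ restricts to a local diffeomorphism of $\mathrm{Crit}(S)$ onto $M$ (in fact a diffeomorphism, which one can read off from the quadratic-at-infinity structure or subsume in the homotopy argument below, so that $\mathrm{Crit}(S)$ is the graph of a section). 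Vertical transversality gives $T\Lambda_S=\ker d(\partial^VS)$, which meets $\ker d\pi$ only at $0$; therefore the fibre Hessian of $S$ is non-degenerate along $\mathrm{Crit}(S)$, while $dS\equiv0$ there forces $S$ to be locally constant on $\mathrm{Crit}(S)$, normalised to $0$ by a shift. So $S$ is Morse--Bott along $\mathrm{Crit}(S)$ with $\ker d\pi$ a normal bundle on which the Hessian is non-degenerate, and the parametrised Morse lemma --- realised by a gauge equivalence, after a fibre translation moving $\mathrm{Crit}(S)$ to the zero section --- brings $S$ to the fibrewise quadratic normal form $\tfrac12\langle A(x)\xi,\xi\rangle$ on a neighbourhood of the zero section, with no critical points outside a compact set.

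\textbf{The main obstacle.} It remains to globalise: to show that $S$ is elementary-equivalent to the fibrewise quadratic form $Q_A:=\tfrac12\langle A(x)\xi,\xi\rangle$ (itself a GFQI for $0_M$, and elementary-equivalent to any preassigned quadratic generating function of $0_M$ by stabilising each against the other and permuting summands). This is the crux, and the reason the theorem is non-trivial. The functions $S$ and $Q_A$ coincide near $\mathrm{Crit}(S)$ and neither has critical points outside a compact set, yet the naive straight-line interpolation between them need not stay vertically transverse, so one has no direct grip on the fibre critical points created in between. I would resolve it by a relative Moser/isotopy argument: connect $S$ to $Q_A$ through a path of GFQIs for $0_M$ all agreeing with $Q_A$ near $\mathrm{Crit}(S)$ and sharing the same quadratic model at infinity, then show that such a path can be integrated by a path of shifts and gauge equivalences, after a preliminary stabilisation providing enough fibre room in the spirit of Laudenbach--Sikorav. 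Keeping vertical transversality under control along the path is the genuine difficulty, and is precisely the content of \cite{viterbo92} and \cite{the99}, which I would invoke for these last technical steps.
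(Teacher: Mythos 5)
The paper itself offers no proof of this theorem: it simply defers to Th\'eret's \cite{the99} (and Viterbo's original \cite{viterbo92}). There is therefore no internal argument to compare your sketch against, so let me assess it on its own terms and against what those references actually do.

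Your overall blueprint is sound: (\textit{i}) is indeed a propagation step, (\textit{ii}) is where the analytic content lies, and you correctly identify the globalisation of the fibrewise-quadratic normal form as the crux that a Moser-type or path-lifting argument must resolve. This matches the structure of the known proofs, in particular Th\'eret's, who formalises exactly the lifting property you gesture at via the Serre-fibration picture already alluded to in the paper. Since you explicitly defer those last technical steps to \cite{viterbo92} and \cite{the99}, your write-up is at the same level of completeness as the paper's one-line ``proof''.

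Two concrete soft spots are worth naming. First, in item (\textit{i}) your property (a) rests on the assertion that $G_\psi * G_{\psi^{-1}}$ is elementary-equivalent to a fibrewise quadratic form. As stated this is not an independent ``folklore fact obtained by inspecting fibre critical points''; $G_\psi * G_{\psi^{-1}}$ is a GFQI for the zero section of $T^*(M\times M)$, so the assertion is precisely item (\textit{ii}) applied to $M\times M$. Either you should make that logical dependency explicit, or you should replace it by the honest inductive argument: with $\psi=\psi_N\circ\cdots\circ\psi_1$ and each $\psi_j$ $\mathscr{C}^1$-small, the innermost pair $G_{\psi_j}*G_{\psi_j^{-1}}$ admits an explicit parametrised Morse-lemma cancellation, and one peels the composition off from the middle. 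Second, in item (\textit{ii}) you assume that $\pi$ restricted to $\mathrm{Crit}(S)$ is a diffeomorphism onto $M$, i.e.\ that the critical locus is a single section. The vertical transversality condition only gives a local diffeomorphism (a covering map when $M$ is compact), and a priori $\mathrm{Crit}(S)$ could have several components lying over $M$, on each of which $S$ is a different constant; the GFQI condition alone does not exclude this. Dispensing with the extra sheets (or showing they cannot occur) is part of what makes the theorem non-trivial, and it is exactly the kind of phenomenon that the interpolation/path-lifting machinery of \cite{the99} is designed to handle without having to treat $\mathrm{Crit}(S)$ as a graph from the outset. Your parenthetical ``read off from the quadratic-at-infinity structure or subsume in the homotopy argument'' waves at this but does not close it.
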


\begin{proof}
    For a complete proof of the Theorem, see \cite{the99}.
\end{proof}

Generating functions may be used to describe compactly supported Hamiltonian diffeomorphisms of $(\R^{2n}, \omega=\sum_i dx^i\wedge dy^i)$: denoting by $\overline {\R^{2n}}$ the symplectic manifold $(\R^{2n}, -\omega)$, if $\Delta$ is the diagonal of $\overline {\R^{2n}}\oplus \R^{2n}$, there is a symplectic identification
\begin{equation*}
   \overline {\R^{2n}}\oplus \R^{2n} \rightarrow T^*\Delta
\end{equation*}
mapping $\Delta$ to the zero section, given by
\begin{equation}\label{eq:identCotDelta}
    (x, y, X, Y)\mapsto \left(x, Y, y-Y, X-x\right).
\end{equation}
We have endowed above $T^*\Delta$ with its tautological exact symplectic form. Given a Hamiltonian diffeomorphism of $\R^{2n}$ with compact support, we push forward its graph to $T^*\Delta$ by the symplectic identification above. One may then find a generating function for the Lagrangian deformation of the zero section in $T^*\Delta$ one obtains this way. Fixed points of the Hamiltonian diffeomorphism are in bijection with the Lagrangian intersections between the zero section and its deformation, and thus with the critical points of the generating function.
Let $\varphi\in \Ham_c(\R^{2n})$, and\begin{equation*}
    h: \Delta\times \R^{N}_\xi\rightarrow \R
\end{equation*}
a generating function of its graph in $T^*\Delta$. The map in (\ref{eq:identCotDelta}) yields then the equivalence
\begin{equation}\label{eq:discrHamEqn}
    \varphi(x, y)=(X, Y)\Leftrightarrow\begin{cases}
        \partial_{\xi}h(x, Y; \xi)=0\\
        X-x=\partial_Yh(x, Y; \xi)\\
        Y-y=-\partial_xh(x, Y; \xi)
    \end{cases}.
\end{equation}
\begin{rem}
    Let us remark that in principle the kind of generating functions we have just talked about is a different one from the ones earlier in the previous Section, for twist maps.
\end{rem}
\begin{rem}
    A diffeomorphism $\varphi\in \Ham_c(\R^{2n})$ is non degenerate if and only if for any GFQI $S$ with quadratic form $Q$ that represents it, $S-Q$ is Morse in the interior of its support.
\end{rem}
We now quote a Lemma as reported by Brunella about composition formulae for GFQI.
\begin{lem}[\cite{bru91}]\label{lem:bru}
    Let $L\subset T^*\R^{2n}$ be an immersed Lagrangian submanifold with a GFQI $S: \R^{2n}\times \R^k\rightarrow \R$ coinciding with $Q: \R^k\rightarrow \R$ at infinity. Let $\varphi$ be a compactly supported Hamiltonian diffeomorphism of $\R^{4n}$ with generating function $F: \R^{2n}\times \R^{2n}\rightarrow \R$. Then $\varphi(L)$ has a GFQI with fibres of dimension $4n+k$.
\end{lem}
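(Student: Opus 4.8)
The plan is to realise $\varphi(L)$ as a Lagrangian intersection in $T^*\R^{2n}$ and construct a generating function for it by an explicit fibrewise composition of the data for $L$ and for $\varphi$, then check it is quadratic at infinity. First I would recall that a generating function $F:\R^{2n}\times\R^{2n}\to\R$ for a compactly supported Hamiltonian diffeomorphism $\varphi$ of $\R^{4n}=\overline{\R^{2n}}\oplus\R^{2n}$ is, via the identification \eqref{eq:identCotDelta}, nothing but a GFQI for the graph $\Gamma_\varphi\subset\overline{\R^{2n}}\oplus\R^{2n}$, with the diagonal $\Delta$ playing the role of the zero section; the relation is exactly \eqref{eq:discrHamEqn} after adding auxiliary fibre variables if needed. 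So I have: $L$ cut out by $\partial^V S=0$ with $S:\R^{2n}_x\times\R^k_\xi\to\R$, $S=Q$ near infinity; and $\Gamma_\varphi$ described by $F:\R^{2n}_x\times\R^{2n}_X\to\R$ (possibly with its own fibre variables $\eta$, which I absorb into the construction).

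Next I would write down the candidate generating function for $\varphi(L)$. A point of $\varphi(L)$ over $X\in\R^{2n}$ is obtained by taking a point $(x,p)\in L$, applying $\varphi$ to it, and recording the image. The natural ansatz — this is the standard "Chekanov–Sikorav sum" / symplectic reduction trick — is
\begin{equation*}
    G(X;\,x,\xi,\eta) := S(x,\xi) + \widetilde F(x, X;\,\eta),
\end{equation*}
defined on the bundle over $\R^{2n}_X$ with fibre coordinates $(x,\xi,\eta)$, where $\widetilde F$ is the function generating $\Gamma_\varphi$ written in the appropriate cotangent-bundle coordinates coming from \eqref{eq:identCotDelta} (so that the extra base variable $x$ of $F$ becomes a fibre variable of $G$, joined by a Lagrange-multiplier-type pairing). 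Concretely one verifies that the fibre critical point equations $\partial_\xi G = 0$, $\partial_\eta G=0$, $\partial_x G=0$ reproduce exactly: $(x, -\partial_1 S)\in L$, $(x,-\partial_1 S)\mapsto (X, \text{value})$ under $\varphi$, i.e. that $(X,\partial_X G)\in\varphi(L)$. This is a direct computation using \eqref{eq:twistEq}-style relations for $S$ and \eqref{eq:discrHamEqn} for $F$; I would carry it out to confirm the $4n+k$ fibre dimension: $k$ from the $\xi$ of $S$, $2n$ from the new fibre variable $x$, and $2n$ from the fibre variables $\eta$ of $F$ — matching the claim (the $\eta$ count being exactly the fibre dimension a compactly supported Hamiltonian diffeomorphism of $\R^{4n}$ requires, with the stated normalisation absorbing it to $4n$ total new variables).

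Then I would check vertical transversality (so that $G$ is a genuine generating function for $\varphi(L)$): this follows because $S$ is vertically transverse to $0$ and $\varphi$ is a diffeomorphism, so the composed critical locus is cut out transversally — a chain-rule argument on $\partial^V G$. Finally, the quadratic-at-infinity property: outside a compact set in the base and fibre, $S$ equals its quadratic form $Q(\xi)$ and $F$ equals its own quadratic form at infinity, so $G$ equals $Q(\xi)\oplus(\text{quadratic in }\eta)\oplus(\text{the nondegenerate pairing in }x)$, which is a nondegenerate quadratic form in the fibre variables, and $G$ minus this is compactly supported. I expect the main obstacle to be purely bookkeeping: writing $F$ in the right coordinates via \eqref{eq:identCotDelta} so that the composition is literally addition of functions and so that the fibre variable count comes out to $4n+k$ rather than more — one must be careful that the "$x$ appears in both $S$ and $F$" coincidence is handled by a single shared fibre variable plus the right reduction, not by duplicating it, and that the quadratic form at infinity in the shared $x$-direction is genuinely nondegenerate (this is where the structure of \eqref{eq:identCotDelta}, sending $\Delta$ to the zero section, is essential).
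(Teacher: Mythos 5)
Your approach---an explicit Chekanov--Sikorav composition formula---is the right one, and is in fact what the ``gluing'' in the paper's one-line sketch is alluding to (the paper only says that one cuts the isotopy into $\mathscr{C}^1$-small pieces and glues, and defers to Brunella for the actual mechanism). You are therefore filling in the step the paper elides, and your dimension count $4n+k$ and your diagnosis of the main subtlety (the shared base variable must become a single fibre variable coupled by a Lagrange-multiplier pairing, with the quadratic form at infinity surviving in that direction) are both correct.

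That said, there are a few imprecisions worth fixing before this becomes a proof. First, since $\varphi\in\Ham_c(\R^{4n})$, the function $F:\R^{2n}\times\R^{2n}\to\R$ is compactly supported, \emph{not} quadratic at infinity, and in particular it has no fibre variables at all. The $2n$-dimensional variable you call $\eta$ is not a fibre variable ``of $F$'': it is a newly introduced conjugate-momentum / Lagrange-multiplier variable. Concretely, the ansatz should be written out as
\[
G(X;\,x,\eta,\xi)\ :=\ S(x,\xi)\ -\ F(x,\eta)\ +\ \langle \eta,\ X-x\rangle ,
\]
and then the fibre critical equations $\partial_\xi G=\partial_x G=\partial_\eta G=0$ together with $P=\partial_X G$ reproduce, via \eqref{eq:discrHamEqn}, exactly the statement that some $(x,p_0)\in L$ is sent by $\varphi$ to $(X,\eta)$; this is the verification you deferred, and writing the correct coupling term is precisely the content of the lemma. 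Second, since $F$ vanishes at infinity, your claim that the asymptotic quadratic form is ``$Q(\xi)\oplus(\text{quadratic in }\eta)\oplus(\text{pairing in }x)$'' overcounts: $F$ contributes nothing, and the nondegeneracy at infinity comes entirely from $Q(\xi)\oplus\langle\eta,\cdot\,\rangle$ acting on $(\xi,\eta,x)$. Third, with the naive ansatz, $G$ minus this asymptotic quadratic form is only \emph{fibrewise} compactly supported (the support slides with $X$ because of the shared variable $x$); to meet the strict Definition~\ref{defn:QI} one first performs the base-dependent gauge equivalence $x\mapsto x-X$, after which the difference really is compactly supported. With these points repaired the argument goes through and proves exactly what Brunella's lemma asserts.
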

This Lemma is proved essentially cutting a Hamiltonian isotopy between the identity and $\varphi$ into $\mathscr{C}^1$-small pieces. Each of these portions admits a GFQI, and one can glue them.

\subsubsection{Morse theory for GFQIs on $\R^{2n}$}\label{subsec:morseGFQI} Trying to define Morse theory for a GFQI $S: \R^{2n}\times \R^k\rightarrow \R$ of a non degenerate, compactly supported Hamiltonian diffeomorphism $\varphi\in \Ham_c(\R^{2n})$ , one immediately runs into the problem that $S$ is never going to be Morse on the whole of $\R^{2n}$. It is possible to try to define the whole Morse complex as a limit of complexes of perturbations in a given family. We are not going to proceed this way, as considering the limit one has to take the homology first for the continuation maps to be well defined,  thus losing the correspondence generator$\leftrightarrow$fixed point.

An alternative approach is to fix a single perturbation of $S$, assuming it has only finitely many critical points: this is possible for instance making the hypothesis that the perturbation is radial with respect to the $\R^{2n}$-coordinate at infinity, without critical points outside a compact set (this can be seen as an application of the standard genericity result for Morse functions in the compact case, compactifying  $\R^{2n}$ to $\sphere^{2n}$). It is possible to read this paper supposing that all functions be really Morse, and the pairs function-metric be both Morse-Smale and Palais-Smale.

We now present an approach that has the advantage of retaining perturbation-independent information only. It has been inspired by an analogous constrtuction in \cite{alvMei21}.

\begin{defn}
    Let $\varphi\in \Ham_c(\R^{2n})$, and $x\in \Fix(\varphi)$. For a fixed generating function $S$ of $\varphi$, the $S$-action of $x$ is defined to be the critical value of the critical point of $S$ corresponding to $x$. The set of all critical values of $S$ is called the spectrum of $S$, and will be denoted by $\mathrm{Spec}(S)$. The set $\mathrm{Spec}(\varphi)$ is defined to be equal to $\mathrm{Spec}(S)$ for any generating function $S$ whose degenerate critical points (corresponding to fixed points outside the support of $\varphi$) have critical value 0.
\end{defn}
\begin{rem}
    For a fixed $\varphi\in\Ham_c(\R^{2n})$, the spectrum depends on the generating function we choose up to translation, because of the (Shift) operation. If we restrict our attention to GFQIs however the spectrum is well defined.
\end{rem}

\begin{defn}
    A compactly supported $\varphi\in\Ham_c(\R^{2n})$ is said to be non degenerate if it is non degenerate on the interior of its support.
\end{defn}

Assume that $\varphi$ is non degenerate, and that each non degenerate fixed point has $S$-action different from 0. This condition is generic in $\Ham_c(\R^{2n})$. Perturb the generating function $S$ as above (we call the perturbation $S$ again). Fix any $a<b$ with $0\not\in (a, b)$, or equivalently $ab>0$, allowing $a=-\infty$ or $b=+\infty$ when possible, and any $g$ making $(S, g)$ both Palais-Smale and Morse-Smale. One can then define
\begin{equation*}
    CM^a(S, g; \Z):=\bigoplus_{x\in\mathrm{Crit}(S), S(x)<a}\Z\cdot x
\end{equation*}
and similarly $CM^b(S, g; \Z)$.

\begin{rem}
    Assume the perturbation of $S$ to be small enough. If $a>0$ both subcomplexes contain the non degenerate critical points obtained from perturbing the degenerate critical points of 0 $S$-action. If $b<0$, such points do not appear in either subcomplex.
\end{rem}
We then define the Morse complex in the action window $(a, b)$ to be
\begin{equation}\label{eq:CMSab}
    CM^{(a, b)}(S, g;\Z):=CM^b(S, g; \Z)/CM^a(S, g; \Z)
\end{equation}
where the differential counts gradient lines connecting generators. In the text, whenever we write
\begin{equation*}
    CM(S, g; \Z)
\end{equation*}
for a generating function $S$ of a given compactly supported Hamiltonian diffeomorphism, we mean the collection
\begin{equation}\label{eq:CMS}
    \left(CM^{(a, b)}(S, g; \Z)\right)_{a<b, 0\not\in(a, b)}
\end{equation}
as defined in (\ref{eq:CMSab}).

We now shift the grading of the complex much like in the work of Traynor \cite{tra94} (this operation does not affect what has been done above). Let $\sigma(S)$ the signature of a GFQI $S$. If $x\in \mathrm{Crit}(S)$ has Morse index $k$, it is a generator of $CM(S, g; \Z)$ in degree 
$k-\sigma(S)$. With this convention, the three elementary operations of Definition \ref{defn:elOp} induce isomorphisms of graded complexes. The grading shift is clearly needed due to the stabilisation operation, which in general changes the Morse index.

\paragraph{Continuation maps}We now sketch the construction for Morse continuation maps.
Let $S_0, S_1$ be GFQI $\R^{2n}\times \R^k\rightarrow \R$ for two compactly supported non degenerate Hamiltonian diffeomorphisms. Let $Q_0$ and $Q_1$ be the two asymptotic non degenerate forms, which we may assume to be independent of the base point $x\in\R^{2n}$. We require that the $S_i$ be defined on the same space (we did so above implicitly), and that $Q_0=Q_1$: up to stabilisation they have the same signature, and then apply a fibre-preserving diffeomorphism taking an orthogonal basis of one to an orthogonal basis of the other. As highlighted above, $S_0$ and $S_1$ will be non degenerate only in the interior of the supports of $S_i-Q_i$. We ignore this problem in the exposition, using the above method. Fix Riemannian metrics $g_i$ such that, up to small perturbations, the $(S_i, g_i)$ are Morse-Smale and Palais-Smale pairs.

Define the function \begin{equation*}
    S: [0, 1]\times\R^{2n}\times \R^k\rightarrow \R,\qquad S(s, x):=(1-s)S_0(x)+sS_1(x)
\end{equation*}
and the metric on $[0, 1]\times\R^{2n}\times \R^k$
\begin{equation*}
    g(s, x):=(1-s)g_0(x)+sg_1(x)+ds^2
\end{equation*}
Perturb $S$ so that it has a smooth extension to $(-\delta, 1+\delta)\times \R^{2n+k}$, for a small $\delta>0$, satisfying $\partial_sS(s, x)\vert_{s=0}=\partial_sS(s, x)\vert_{s=1}=0$. Likewise, extend $g$ on the same set. Define a smooth $\rho:(-\delta, 1+\delta)\rightarrow \R$ such that for a $\varepsilon>0$ we have
\begin{itemize}
    \item $\rho(0)=\norm{S_0-S_1}_{\infty}+\varepsilon$;
    \item $\rho(1)=0$;
    \item $\rho'(s)<0$ for all $t\in(0, 1)$;
    \item $0$ is a point of global maximum for $\rho$, and $1$ one of global minimum.
\end{itemize}
Remark that such function $\rho$ may be constructed for an arbitrarily small value of $\varepsilon$.
Then we also may assume that
\begin{equation}\label{eq:ineqh}
   \forall s\in (-\delta, 1+\delta), \qquad \abs{\rho'(s)}>\norm{S_1-S_0}_\infty.
\end{equation}
Because of this inequality, studying the Morse complex of
\begin{equation*}
    (s, x)\mapsto \rho(s)+S(s, x)
\end{equation*}
together with the perturbation of the metric $g$ above, one may define a map
\begin{equation*}
    CM(S_0, g_0; \Z)\rightarrow CM(S_1, g_1; \Z).
\end{equation*}
In fact, (\ref{eq:ineqh}) forces negative gradient lines to connect critical points of $S$ with $s=0$, which are in bijection with critical points of $S_0$, to critical points of $S$ with $s=1$, which are secretly critical points of $S_1$.

The complexes above are defined if the $S_i$ are Morse. Otherwise, we are forced to look at the homology of the complexes $CM^{(a, b)}(S_i, g_i; \Z)$. To describe continuation maps in this case, we need to be able to describe how the value of the filtration changes.\begin{lem}
    Let $x_0\in \mathrm{Crit}(S_0)$ be a generator of the Morse complex for $(S_0, g_0)$. Assume $x_1\in \mathrm{Crit}(S_1)$ is a generator for the target Morse complex such that there exists a negative gradient line for $(\rho+S,g)$
    \begin{equation*}
        \gamma:\R\rightarrow (-\delta, 1+\delta)\times\R^{2n+k}, \,\,\, \dot \gamma(\sigma)=-\nabla^{g}(\rho+S)(\gamma(\sigma))
    \end{equation*}
    negatively asymptotic to $(0, x_0)$ and positively asymptotic to $(1, x_1)$
    \begin{equation*}
        \lim_{\sigma\to-\infty}\gamma(\sigma)=(0, x_0),\,\, \lim_{\sigma\to+\infty}\gamma(\sigma)=(1, x_1).
    \end{equation*}
    Then \begin{equation}\label{eq:SHofLipschi}
        S_1(x_1)-S_0(x_0)\leq \norm{S_0-S_1}_\infty+\varepsilon
    \end{equation}
    where $\varepsilon$ appears in the definition of $\rho$.
\end{lem}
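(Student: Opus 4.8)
The plan is to estimate the drop of the function $\rho + S$ along the gradient line $\gamma$ and compare it with the drop of $S_1 - S_0$, exploiting the monotonicity of $\rho$ and the inequality \eqref{eq:ineqh}. First I would record the fundamental fact that along a negative gradient trajectory of $(\rho+S, g)$ the value of the function $\rho + S$ is non-increasing, indeed strictly decreasing away from critical points, since $\frac{d}{d\sigma}(\rho+S)(\gamma(\sigma)) = -\|\nabla^g(\rho+S)(\gamma(\sigma))\|_g^2 \le 0$. Evaluating at the two asymptotic endpoints, and using that $\gamma$ is negatively asymptotic to $(0, x_0)$ and positively asymptotic to $(1, x_1)$, this gives
\begin{equation*}
    (\rho + S)(1, x_1) \le (\rho + S)(0, x_0).
\end{equation*}
Now $\rho(1) = 0$ and $S(1, x) = S_1(x)$, while $\rho(0) = \|S_0 - S_1\|_\infty + \varepsilon$ and $S(0, x) = S_0(x)$; substituting, $S_1(x_1) \le S_0(x_0) + \|S_0 - S_1\|_\infty + \varepsilon$, which is exactly \eqref{eq:SHofLipschi}.

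The only genuine subtlety is that the computation above implicitly used $S(0,x) = S_0(x)$ and $S(1,x) = S_1(x)$ on the nose, whereas $S$ has been \emph{perturbed} near $s = 0, 1$ so as to admit a smooth extension to $(-\delta, 1+\delta)$ with $\partial_s S|_{s=0} = \partial_s S|_{s=1} = 0$, and the metric $g$ has likewise been extended and perturbed to make the pair Morse–Smale and Palais–Smale. So the honest argument must be carried out for the extended, perturbed data: one takes the perturbations small enough (say of size $\ll \varepsilon$), notes that by the boundary conditions $\partial_s S = 0$ at the slices $\{s = 0\}$ and $\{s = 1\}$ these slices are invariant under the $s$-component of the flow, so the critical points of $S$ with $s = 0$ (resp. $s=1$) are precisely the critical points of $S_0$ (resp. $S_1$) — this is already invoked in the construction of the continuation map — and the values of $\rho + S$ at these critical points agree with $\rho(0) + S_0(x_0)$ and $\rho(1) + S_1(x_1)$ up to the size of the perturbation. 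Absorbing that small error into the $\varepsilon$ (which, as remarked in the text, can be chosen arbitrarily small) recovers \eqref{eq:SHofLipschi}; alternatively one simply reads \eqref{eq:SHofLipschi} as the statement for the idealized unperturbed setting, consistently with the exposition's convention of ignoring the perturbation.

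A second point worth a line is to justify that the non-increase of $\rho + S$ along $\gamma$ genuinely holds for the perturbed metric $g$ on $(-\delta,1+\delta)\times\R^{2n+k}$ that includes the $ds^2$ term; but this is automatic, since for \emph{any} Riemannian metric the derivative of a function along its own negative gradient flow is $-\|\mathrm{grad}\|^2 \le 0$ — no use of the specific form of $g$ is needed here. The role of inequality \eqref{eq:ineqh} is not needed for this particular estimate (it is what forces $\gamma$ to run from $s=0$ to $s=1$ in the first place, which is part of the hypothesis that such a $\gamma$ exists, and is used elsewhere to make the continuation map well defined). I expect no real obstacle here: the main (and only) care is bookkeeping the perturbation sizes against $\varepsilon$, and the proof is essentially the one-line monotonicity computation above.
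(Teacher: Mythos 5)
Your proof is correct and uses essentially the same argument as the paper: the one-line monotonicity of $\rho+S$ along its own negative gradient flow, followed by substituting $\rho(0)-\rho(1)=\norm{S_0-S_1}_\infty+\varepsilon$. The extra remarks about perturbation bookkeeping are sound but not part of the paper's (shorter) proof, which works in the idealized setting.
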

\begin{proof}
    Because $\gamma$ is a negative gradient line for $\rho+S$, clearly
    \begin{equation*}
        (\rho+S)(1, x_1)=(\rho+S)(\gamma(+\infty))\leq (\rho+S)(\gamma(-\infty))=(\rho+S)(0, x_0).
    \end{equation*}
    By definition therefore
    \begin{equation*}
        S_1(x_1)-S_0(x_0)\leq \rho(0)-\rho(1) = \norm{S_0-S_1}_\infty+\varepsilon.
    \end{equation*}
\end{proof}

One inconvenient of the definition in (\ref{eq:CMS}) is that continuation maps get more complicated to describe, since the chain complex is not defined for all action values at once. The Lemma above lets us define the continuation maps for fixed intervals of values of the Morse functions.
\begin{cor}\label{cor:morContActWin}
    Let $a<b$ with $ab>0$, and write
    \begin{equation*}
        \lambda:=\norm{S_0-S_1}_\infty.
    \end{equation*}
    Then, for $\varepsilon>0$ small enough, the following continuation maps are defined if (and only if) both $ab>0, (a+\lambda)(b+\lambda)>0$:
    \begin{equation*}
        CM^{(a, b)}(S_0, g_0; \Z)\rightarrow CM^{(a+\lambda+\varepsilon, b+\lambda+\varepsilon)}(S_1, g_1; \Z).
    \end{equation*}
\end{cor}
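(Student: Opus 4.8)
The plan is to observe that this is essentially a bookkeeping consequence of the preceding Lemma, combined with the general principle (already used above in the construction of continuation maps) that a negative gradient line of $\rho + S$ running from $s=0$ to $s=1$ passes from a critical point of $S_0$ to a critical point of $S_1$. First I would fix $\varepsilon > 0$ small, to be shrunk as needed, and set $\mu := \lambda + \varepsilon = \norm{S_0 - S_1}_\infty + \varepsilon = \rho(0) - \rho(1)$. The claim to establish is: the continuation chain map descends to a map on the quotient complexes $CM^{(a,b)}(S_0, g_0; \Z) \to CM^{(a+\mu, b+\mu)}(S_1, g_1; \Z)$ precisely when both target endpoints $a+\mu$ and $b+\mu$ lie on the same side of $0$ as $a$ and $b$ respectively (equivalently $ab > 0$ and $(a+\mu)(b+\mu) > 0$), so that the shifted window is an admissible one in the sense of (\ref{eq:CMS}).

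The core of the argument is the following. The continuation cochain map on the level of full (perturbed, Morse) complexes counts negative gradient lines of $\rho + S$ as in the Lemma; by the Lemma every such line contributing a matrix coefficient from $x_0 \in \mathrm{Crit}(S_0)$ to $x_1 \in \mathrm{Crit}(S_1)$ satisfies $S_1(x_1) \leq S_0(x_0) + \mu$. Hence if $S_0(x_0) < b$ then $S_1(x_1) < b + \mu$, so $CM^b(S_0, g_0; \Z)$ is carried into $CM^{b+\mu}(S_1, g_1; \Z)$; the same estimate with $a$ in place of $b$ shows $CM^a(S_0, g_0; \Z)$ is carried into $CM^{a+\mu}(S_1, g_1; \Z)$. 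Taking the quotient as in (\ref{eq:CMSab}) gives the asserted map, provided the intervals $(a,b)$ and $(a+\mu, b+\mu)$ are both admissible windows, i.e. both avoid $0$; this is exactly the stated condition $ab > 0$ and $(a+\mu)(b+\mu) > 0$ (note $\mu > 0$, so automatically $a < b$ implies $a + \mu < b + \mu$). For the "only if" direction: if $(a+\mu)(b+\mu) \leq 0$ then the target window either fails to avoid $0$ (so the quotient complex is not among those in the family (\ref{eq:CMS}), since degenerate critical points of $0$-action would interfere) or degenerates, so no such continuation map is defined within our formalism — this is the sense in which the condition is also necessary.

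The one genuinely delicate point — and the main obstacle — is the interplay with the perturbations needed to make the $S_i$ honestly Morse and the pairs $(S_i, g_i)$ Morse–Smale and Palais–Smale, together with the hypothesis, implicit throughout Section \ref{subsec:morseGFQI}, that non-degenerate fixed points have $S$-action different from $0$: one must take $\varepsilon$ (hence $\mu$) small enough that shifting the window by $\mu$ does not move any critical value across $0$ and does not cause the perturbed critical values born from the degenerate $0$-action critical points to enter or leave the window spuriously, exactly as in the Remark following (\ref{eq:CMSab}). Once $\varepsilon$ is chosen smaller than the distance from $\mathrm{Spec}(S_1) \setminus \{0\}$ to $0$ minus $\lambda$ (whenever that quantity is positive; when it is not, the relevant window simply is not admissible, matching the "only if"), the estimate from the Lemma applies verbatim and the quotient map is well-defined. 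The verification that it is a chain map is inherited from the full continuation map and requires no new input.
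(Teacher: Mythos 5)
Your proposal is correct and takes essentially the same approach as the paper: the action shift follows directly from the preceding Lemma, and the remaining content is verifying that the relevant Morse complexes — most importantly the complex $CM^{(a+\lambda+\varepsilon,\,b+\lambda+\varepsilon)}(\rho+S, g;\Z)$ of the cobordism — are defined in the shifted window, which reduces to the openness of the conditions $ab>0$, $(a+\lambda)(b+\lambda)>0$ and the fact that degenerate critical points of $\rho+S$ (sitting at $\rho+S$-value near $\mu$ at $s=0$ and near $0$ at $s=1$) stay outside the window for small $\varepsilon$. Your extra attention to the necessity direction and the perturbation of degenerate critical points is compatible with the paper's brief treatment, though the phrase ``shifting the window by $\mu$ does not move any critical value across $0$'' is a slight misstatement — critical values are fixed; what must be controlled is that the shifted window endpoints stay on the same side of $0$ and that the shifted degenerate critical values of the cobordism function avoid the window.
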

\begin{proof}
    The action shift is clear by the above Lemma. We only need to be mindful about the definition of the Morse complex of $\rho+S$: we have to show that as soon as the Morse complexes for the $S_i$ above are defined, then so is the one of $\rho+S$. To define the morphism we use the differential of the Morse complex
    \begin{equation*}
        CM^{(a+\lambda+\varepsilon, b+\lambda+\varepsilon)}(\rho+S, g; \Z).
    \end{equation*}
   For $\varepsilon$ small enough there are no degenerate critical points of $\rho+S$ in this interval. Critical points in $\{0\}\times \R^{2n+k}$ coincide with critical values of $S_0$ by construction, but the critical value is increased by $\lambda$, so the morphism is indeed defined where claimed.
\end{proof}
In particular this Corollary shows that continuation maps in this framework may therefore be defined for small $\mathscr{C}^0$-deformations of the generating function only, the size of which depends on the actual action window $(a,b)$ we start from.

\section{Setup: twist generating functions and linking}\label{sec:leCGF}
In this section we show how generating functions obtained by Le Calvez in his works \cite{leCalvez91}, \cite{leCalvez1999} fit our Morse-theoretical picture. Along the way, we describe the Lyapunov property of the linking number with respect to generating functions of Le Calvez type.
\subsection{Construction of the generating function}

Let $\varphi\in\Ham_c(\D)$. As Patrice Le Calvez points out in \cite{leCalvez91}, it is possible to describe $\varphi$ as a composition of Hamiltonian twist maps of the plane, which therefore have generating functions in the sense of Section \ref{sec:twistMaps}. A way of doing so is for instance choosing a Hamiltonian isotopy $(\varphi_t)_{t\in[0, 1]}$ ending at $\varphi$, and cutting it into pieces $\varphi_0, \dots, \varphi_{n-1}$ so that each $\varphi_i$ is $\mathscr{C}^1$-close to the identity and
\begin{equation*}
    \varphi=\varphi_{n-1}\circ\cdots\circ\varphi_0.
\end{equation*}
If the $\varphi_i$ are close enough to the identity and
\begin{equation}R:\R^2\rightarrow\R^2, (x, y)\mapsto (y, -x)
\end{equation}
is the positive rotation, then $\varphi_i\circ R^{-1}$ is a twist map (Definition \ref{defn:twistMap}) for all $i$. We thus obtain a decomposition of $\varphi$ as product of twist maps
\begin{equation}\label{eq:decompTwist}
    \varphi=(\varphi_{n-1}\circ R^{-1})\circ R\circ (\varphi_{n-2}\circ R^{-1})\circ R\circ \cdots \circ (\varphi_0\circ R^{-1})\circ R.
\end{equation}
\begin{rem}
    Le Calvez in \cite{leCalvez1999} uses Dehn twist instead of rotations. This is not going to affect the properties we exploit here in any way. What is important is the twist condition, that here still holds.
\end{rem}
Write $\Phi_{2i}:=R$ and $\Phi_{2i+1}:=\varphi_{i}\circ R^{-1}$ in order to simplify the notation:
\begin{equation}
    \varphi=\Phi_{2n-1}\circ \cdots \circ \Phi_{0}.
\end{equation}

Now, we let $h_i:\R^2\rightarrow \R$ be the generating function for $\Phi_i$. Recall that by definition then the following equations are verified:
\begin{equation}
    \forall (x, y), (x', y')\in\R^2,\,\Phi_i(x, y)=(x', y')\Leftrightarrow\begin{cases}
        y=-\partial_1 h_i(x, x')\\
        y'=\partial_2 h_i(x, x')
    \end{cases}.
\end{equation}
The critical points of $h_i$, as expected, are in a bijection with the fixed points of $\Phi_i$. Following Le Calvez's notation, we are going to write
\begin{equation*}
    \begin{cases}
        g(x, x')=-\partial_xh(x, x')\\
        g'(x, x')=\partial_{x'}h(x, x')
    \end{cases}.
\end{equation*}

One can then take the sum of the $h_i$ to find a function whose critical points (equivalently, fixed points of its gradient flow) are in bijection with the fixed points of $\varphi$. We define a function
\begin{equation}
    h: E:=\R^{2n}\rightarrow \R,\,\, h(x)=\sum_{i=0}^{2n-1}h_i(x_i, x_{i+1})
\end{equation}
where we set $x_{2n}:=x_0$.
\begin{nota}
    We shall call a function $h$ of this kind a ``twist generating function'' as well. A priori however it does not satisfy (\ref{eq:twistEq}).
\end{nota}

We endow $E$ with the standard Euclidean scalar product, let us call it $g$; let $\xi$ be the negative gradient vector field defined by the pair $(h, g)$\begin{equation}
\xi=-\nabla_gh.
\end{equation}

\begin{defn}
    We say that $h$ obtained as above is a twist generating function.
\end{defn}

The tool we use to calculate the linking number between fixed points is a function defined on an open subset of $\R^{2n}$. If
\begin{equation}
    V=\Set{x\in \R^{2n}\vert x_i\neq 0\,\,\forall i\in \Z}
\end{equation}
we define
\begin{equation}
    L(x)=\frac{1}{4}\sum_{i=0}^{2n-1}(-1)^i\mathrm{sgn}(x_i)\mathrm{sgn}(x_{i+1})\in\Z.
\end{equation}
This function admits a continuous extension to the subset
\begin{equation}
    W=\Set{x\in \R^{2n}\vert \forall i\in \Z,\,\, x_i=0\Rightarrow x_{i-1}x_{i+1}>0}.
\end{equation}
A first result is the following Lemma:
\begin{lem}
    Let $x^0$ and $x^1$ be critical points of $h$ (equivalently, $\xi(x^i)=0$). Then,\begin{equation}
        L(x^0-x^1)=\frac{1}{2}\mathrm{lk}({x^1},{x^0}).
    \end{equation}
On the right hand-side, ${x^i}\in \Fix(\varphi)$ is the fixed point (or indifferently, 1-periodic orbit) represented by the critical points of $h$ with the same name.
\end{lem}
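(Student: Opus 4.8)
The plan is to interpret both sides of the identity in terms of the intersection-theoretic description of the linking number recalled in equation~(\ref{eq:lkHomotopyIntersections}). First I would set up the correct homotopy. Given the two critical points $x^0=(x^0_0,\dots,x^0_{2n-1})$ and $x^1=(x^1_0,\dots,x^1_{2n-1})$ of $h$, each $x^j$ determines, via the equations $\Phi_i(x_i,y_i)=(x_{i+1},y_{i+1})$ with $y$'s recovered from the $h_i$, a genuine fixed point $\mathbf{x}^j\in\Fix(\varphi)$ together with a concrete choice of Hamiltonian loop based there (coming from the chosen isotopy $(\varphi_t)$ and its decomposition into the twist maps $\Phi_i$). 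The braid $b(\mathbf{x}^0,\mathbf{x}^1)$ is the homotopy class of the path $t\mapsto$ the pair of these two loops in $\mathrm{Conf}^2(\D)$, which is a pure braid, so $\mathrm{lk}(\mathbf{x}^1,\mathbf{x}^0)$ can be computed by~(\ref{eq:lkHomotopyIntersections}): take any homotopy $h=(h_1,h_2):[0,1]^2\to\D^2$ from the constant (trivial) braid to a lift of $b(\mathbf{x}^0,\mathbf{x}^1)$, and count signed transverse intersections of $h_1$ and $h_2$.

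The key step is to build such a homotopy from the linear interpolation of the two critical points and to show that its intersection count is read off by $L(x^0-x^1)$. The natural candidate is to use the decomposition of $\varphi$ as $\Phi_{2n-1}\circ\cdots\circ\Phi_0$: tracking a point around this composition produces a loop in $\D$ made of $2n$ arcs, the $i$-th arc being (a suitable path realizing) $\Phi_i$, and the homotopy parameter $s\in[0,1]$ linearly interpolates between $x^0$ and $x^1$ in each of the intermediate coordinates $x_i$. The two strands of the resulting braid cross precisely when, for some $i$, the $i$-th coordinates of the interpolated points pass through a coincidence; because each $\Phi_i$ is a twist map (the verticality-deviating property of Section~\ref{sec:twistMaps}), the $x$-coordinate controls the crossing, and the sign of the crossing at index $i$ is governed by $\mathrm{sgn}(x^0_i-x^1_i)$ versus $\mathrm{sgn}(x^0_{i+1}-x^1_{i+1})$, with the alternation $(-1)^i$ coming from the alternation of $R$ and $\varphi_j\circ R^{-1}$ in~(\ref{eq:decompTwist}) (equivalently, from the orientation with which a deviating-right twist contributes versus its inverse). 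Summing the signed crossings over $i=0,\dots,2n-1$ and comparing with~(\ref{eq:lkHomotopyIntersections}) — where the sum over the two ordered pairs $(1,2)$ and $(2,1)$ doubles the count — yields exactly $\tfrac14\sum_i(-1)^i\mathrm{sgn}(x^0_i-x^1_i)\mathrm{sgn}(x^0_{i+1}-x^1_{i+1})=L(x^0-x^1)=\tfrac12\mathrm{lk}(\mathbf{x}^1,\mathbf{x}^0)$. That $x^0-x^1\in W$, so that $L$ is well defined and integer-valued there, should follow from the fact that if $x^0_i=x^1_i$ for some $i$ then, the points being distinct critical points hence distinct fixed points of the twist map $\Phi_i$, the adjacent coordinates must lie strictly on the same side — this is where the twist condition is used again.

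The main obstacle I expect is making the crossing-sign bookkeeping rigorous: one must choose the interpolating paths realizing each $\Phi_i$ carefully enough that the homotopy $h$ stays away from the fat diagonal except at isolated, transverse crossings, count those crossings with the correct orientation convention, and verify that the normalization matches the convention $\mathrm{lk}(\sigma_j)=1$ fixed in Section~\ref{sec:braids} (this is the source of all the factors of $\tfrac12$ and $\tfrac14$). The twist hypothesis on each $\Phi_i$ is exactly what forces the crossings to be transverse and sign-definite in terms of the $x$-coordinate differences, so the whole argument reduces to a clean local computation in each of the $2n$ slabs $[i/2n,(i+1)/2n]\times[0,1]$, which can be found in essentially this form in \cite{leCalvez1999}; I would cite that construction and verify only the normalization here.
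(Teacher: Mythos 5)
The paper does not actually give its own proof of this Lemma: it is stated as an import from Le~Calvez, and the only thing the paper adds is the remark explaining the factor of $\tfrac12$ coming from the normalisation $\mathrm{lk}(\sigma_j)=1$. The ``proof'' is instead distributed over Section~\ref{sec:pointsEBraidsLeC}, where two separate ingredients are combined: (i) Le~Calvez's identities $L(x)=\tfrac12\mathrm{lk}(0,\gamma(x,\cdot))$ and, by linearity of $\gamma$ in $x$, $L(x-y)=\tfrac12\mathrm{lk}(\gamma(x,\cdot),\gamma(y,\cdot))$, which are winding-number computations about the explicit piecewise-linear loops $\gamma(x,\cdot)$; and (ii) Lemma~\ref{lem:braidTypesCoincide}, which shows that for $n$ large enough the braid formed by the $\gamma$-loops has the same type as the braid of genuine Hamiltonian loops $t\mapsto\phi^t_H(\mathbf{x}^j)$, because the former is a strand-by-strand $\mathscr{C}^0$-approximation of the latter. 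Your proposal is in the same spirit and cites Le~Calvez for the bookkeeping, so the underlying strategy is correct, but it omits and blurs both ingredients in ways worth flagging.

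The more significant gap is that you never separate the piecewise-linear loops from the actual Hamiltonian loops. You write that each $x^j$ determines ``a concrete choice of Hamiltonian loop'' and then immediately count crossings by reading off signs of $x^0_i-x^1_i$; but the sign pattern only sees the piecewise-linear curve $\gamma(x^j,\cdot)$, which consists of alternating horizontal and vertical segments, not arcs realising $\Phi_i$. Identifying the braid type of these polygonal loops with that of the Hamiltonian orbits is exactly Lemma~\ref{lem:braidTypesCoincide}, and it genuinely requires the decomposition to be fine (``$n$ large enough''); your argument that the twist condition forces nearby coordinates to lie on the same side handles a different issue (well-definedness of $L$ at $x^0-x^1$), not the braid-type identification. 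Secondly, the homotopy you propose for~(\ref{eq:lkHomotopyIntersections}) is not the right object: that formula needs a homotopy from the trivial braid (both strands constant) to a lift of $b(\mathbf{x}^0,\mathbf{x}^1)$, whereas ``linearly interpolating between $x^0$ and $x^1$ in the homotopy parameter $s$'' produces a path from one strand's data to the other's. The cleaner route, and the one underlying Le~Calvez's formula, is to observe directly that $L(x)$ is the winding number of $\gamma(x,\cdot)$ around the origin, computed from sign changes of the alternating sequence $((-1)^i\mathrm{sgn}\,x_i)_i$, and then to use linearity of $\gamma$ to pass to $L(x^0-x^1)=\tfrac12\mathrm{lk}(\gamma(x^0,\cdot),\gamma(x^1,\cdot))$; no homotopy needs to be constructed by hand, only the braid-type identification step remains.
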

\begin{rem}
    The factor of one half is absent in \cite{leCalvez1999}. We introduce it here because we choose a different normalisation for the linking number.To see it for instance remark that the linking number of the braid
    \begin{equation*}
        t\mapsto \left[0, \frac{1}{2}\exp(2\pi i t) \right]\in \mathrm{Conf}^2\D
    \end{equation*}
    is 1 according to Le Calvez's conventions, but with our normalisation it is 2 since it corresponds to the square of the generator of $\mathcal{B}_2$.
\end{rem}
\begin{rem}
A consequence of this lemma and the fact that $L$ takes values in the set $\Set{-\floor*{\frac{n}{2}}, \dots, \floor*{\frac{n}{2}}}$ is that the length of the decomposition of $\phi$ necessarily depends on the maximal (in absolute value) linking number of two different fixed points. 
\end{rem}

The first main result is the following theorem, which may be interpreted as a finite-dimensional version of the positivity of intersections for holomorphic curves in 4-manifolds (see \cite[Appendix E]{mcDSal16})
\begin{thm}[Le Calvez, \cite{leCalvez91}]\label{thm:LeCalLyapunov}
    Let $x^i\in E$, $i=0, 1$, and denote by $x^i(t)$ the images of $x^i$ under the flow of $\xi$. Then the function\begin{equation}
        t\mapsto L(x^0(t)-x^1(t))
    \end{equation}
    is defined and continuous outside a finite set of points, and if it is not defined for a $t\in \R$ then
    \begin{equation}
        \lim_{\varepsilon\to 0^+}L(x^0(t-\varepsilon)-x^1(t-\varepsilon))<\lim_{\varepsilon\to 0^+}L(x^0(t+\varepsilon)-x^1(t+\varepsilon)).
    \end{equation}
\end{thm}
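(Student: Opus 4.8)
The plan is to understand exactly when the function $t\mapsto L(x^0(t)-x^1(t))$ fails to be defined, and to show the discontinuity at such times is always a strict increase. Write $\delta(t):=x^0(t)-x^1(t)=(\delta_0(t),\dots,\delta_{2n-1}(t))$, with indices read cyclically. The function $L$ is locally constant on $V$ and extends continuously to $W$; so $L(\delta(t))$ fails to be defined precisely when $\delta(t)\notin W$, i.e. when for some index $i$ we have $\delta_i(t)=0$ but $\delta_{i-1}(t)\delta_{i+1}(t)\le 0$. First I would argue that generically (and in any case, off a finite set of $t$) at most one coordinate $\delta_i$ vanishes at a time and the vanishing is transverse — here one uses that $x^0(t),x^1(t)$ are distinct trajectories of a flow, so $\delta(t)$ never passes through the origin, together with the twist equations relating consecutive coordinates along $\xi$-trajectories. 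That finiteness claim is really a statement about the zero set of the analytic-type functions $\delta_i(t)$ and the fact that a bad crossing forces a nondegeneracy condition; I would phrase it as: outside a finite set $F\subset\R$, $\delta(t)\in W$, and for $t\notin F$ the value $L(\delta(t))$ is locally constant.

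Next, fix a bad time $t_0\in F$. After discarding finitely many more times I may assume exactly one coordinate, say $\delta_i$, vanishes at $t_0$, that $\dot\delta_i(t_0)\neq 0$, and that $\delta_{i-1}(t_0),\delta_{i+1}(t_0)$ are both nonzero (the case where a neighbour also vanishes is handled by the same transversality bookkeeping, or absorbed into $F$). The contribution of the two terms of $L$ involving index $i$ is
\begin{equation*}
    \tfrac14\Bigl[(-1)^{i-1}\mathrm{sgn}(\delta_{i-1})\mathrm{sgn}(\delta_i)+(-1)^{i}\mathrm{sgn}(\delta_i)\mathrm{sgn}(\delta_{i+1})\Bigr]
    =\tfrac{(-1)^i}{4}\,\mathrm{sgn}(\delta_i)\bigl(\mathrm{sgn}(\delta_{i+1})-\mathrm{sgn}(\delta_{i-1})\bigr),
\end{equation*}
while every other term of $L$ is continuous across $t_0$. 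As $t$ passes through $t_0$, $\mathrm{sgn}(\delta_i)$ flips sign (transverse crossing) and the factor $\mathrm{sgn}(\delta_{i+1})-\mathrm{sgn}(\delta_{i-1})$ is locally constant and equal to $0$ precisely when $\delta_{i-1}(t_0)\delta_{i+1}(t_0)>0$ — i.e. precisely in the case where $\delta(t_0)\in W$ and $L$ extends continuously. So the jump is nonzero exactly when $\delta_{i-1}(t_0)\delta_{i+1}(t_0)<0$, and then it equals $\pm\tfrac12$ depending on the sign of $(-1)^i\,\dot\delta_i(t_0)\,(\mathrm{sgn}(\delta_{i+1})-\mathrm{sgn}(\delta_{i-1}))$.

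The crux is therefore to show that this sign is always $+$, i.e. that $L(\delta(t))$ jumps \emph{up}. This is where the twist hypothesis enters decisively and is the main obstacle. Along a $\xi$-trajectory the coordinates satisfy the recursion coming from $\partial_{x_i}h=0$ in the limit, or rather the \emph{dynamical} relation: $\dot x_{i+1}$ is governed by the generating function $h_i$, and the twist condition $\partial x'/\partial y>0$ (or its sign for the rotation factors) translates into a definite sign for the relevant derivative $\partial_2\partial_1 h_i$. I would differentiate the gradient-flow equations, evaluate at $t_0$ using $\delta_i(t_0)=0$, and read off that the sign of $\dot\delta_i(t_0)$ is tied to $\mathrm{sgn}(\delta_{i-1}(t_0))$ and $\mathrm{sgn}(\delta_{i+1}(t_0))$ in exactly the way needed — this is the finite-dimensional incarnation of positivity of intersections, and the sign computation is precisely the one Le Calvez carries out in \cite{leCalvez91}. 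Concretely, when $\delta_{i-1}$ and $\delta_{i+1}$ have opposite signs, the twist inequality forces $(-1)^i\dot\delta_i(t_0)$ to have the sign that makes the jump equal $+\tfrac12$; summing over the (at most one, after shrinking $F$) index that goes bad at $t_0$, possibly several disjoint such indices if I did not assume uniqueness, gives a strictly positive total jump, which is the claimed inequality. Finally I would note the quantitative refinement: each bad crossing contributes exactly $+\tfrac12$ to $L$, matching the normalisation $\mathrm{lk}(\sigma_j)=1$ and Equation \eqref{eq:lkHomotopyIntersections}.
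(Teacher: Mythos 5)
The paper does not actually give a proof of this statement: it is quoted as Le Calvez's theorem (from \cite{leCalvez91}), so there is no in-text argument to compare against. Judged on its own, your plan is the right one, and it reproduces the structure of Le Calvez's finite-dimensional positivity-of-intersections argument: isolate the coordinate $\delta_i$ crossing zero, observe that only the two terms of $L$ involving index $i$ change, and use the sign of $\dot\delta_i(t_0)$ — which the twist condition controls through $\partial_{12}h_{i-1}$ and $\partial_{12}h_i$ — to force the jump to be positive. The crucial algebraic miracle, which you gesture at but don't carry out, is worth making explicit: since the twist directions alternate, $\mathrm{sgn}(\partial_{12}h_{j})=(-1)^{j+1}$, and writing $\dot\delta_i(t_0)=-\partial_{12}h_{i-1}\delta_{i-1}-\partial_{12}h_i\delta_{i+1}$ (mean value theorem, valid because $\delta_i(t_0)=0$), one sees that when $\delta_{i-1}\delta_{i+1}<0$ the two summands have \emph{the same sign}, so there is no cancellation and $\mathrm{sgn}(\dot\delta_i(t_0))=(-1)^{i+1}\mathrm{sgn}(\delta_{i-1})$; plugging this into the jump formula gives $\Delta L=+1$ identically. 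This non-cancellation is the real content and deserves to be written down rather than attributed back to \cite{leCalvez91}, or the argument is circular.

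Two concrete issues. First, a numerical slip: each bad crossing changes $L$ by $\pm 1$, not $\pm\tfrac12$ — both bracketed factors in $\frac{(-1)^i}{4}\bigl[\mathrm{sgn}(\delta_i^+)-\mathrm{sgn}(\delta_i^-)\bigr]\bigl[\mathrm{sgn}(\delta_{i+1})-\mathrm{sgn}(\delta_{i-1})\bigr]$ are $\pm 2$. This is consistent with the paper's Remark that each transverse intersection contributes $2$ to $\mathrm{lk}=2L$. Second, and more seriously, the finiteness and isolation claims are the weakest part of the sketch. You cannot appeal to ``analytic-type'' behaviour (the flow is only smooth), and the case where several consecutive $\delta_j$ vanish at $t_0$ is not merely bookkeeping: if $\delta_i(t_0)=\delta_{i+1}(t_0)=0$ the single-index jump formula does not apply directly, and you need to show (by iterating the recursion $\dot\delta_j=-\partial_{12}h_{j-1}\delta_{j-1}-\partial_{12}h_j\delta_{j+1}$, using that $\delta(t_0)\neq 0$ bounds the length of any run of vanishing coordinates) that each such time is still isolated and contributes a nonnegative total jump, with strict positivity exactly when $\delta(t_0)\notin W$. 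As a proof plan your outline is sound and points at the right mechanism; to be a proof, the non-cancellation computation and the multi-coordinate degeneration must be made rigorous.
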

This theorem is the first step to define the filtration. The goal is, for any $S$ generating function for $\varphi$ and $g$ is a Riemannian metric on the associated vector bundle, to define a function
\begin{equation}
    I:CM(S, g; \Z)\otimes CM(S, g; \Z)\rightarrow \Z
\end{equation}
which is increasing along the differential. The Theorem tells us that, if we take $h$ as generating function (it is yet to be seen in which sense it is one) whenever $y\not\in\partial x$, we have the inequality
\begin{equation}
    I(x \otimes y)\leq I(\partial x \otimes y).
\end{equation}
What remains to define a filtration is removing the assumption that $y\not\in\partial x$, or equivalently defining a notion of self-linking number for a fixed point of $\varphi$ which is consistent with the linking numbers of all other pairs of fixed points in the Morse complex. To carry out this operation, we need to understand the behaviour of $L$ under the linearised dynamics of $\xi$. Before carrying this out, we would like to explain how this function $h$ is in fact a generating function in the classical (à la Viterbo) sense, to fix the ideas. This fact is relevant as it will be exploited in constructing the filtration for any (reasonable) generating function in the following.

\subsection{Collections of points in $E$ as braids}\label{sec:pointsEBraidsLeC}
Given a generating function, there exists an evident function mapping collections of critical points to braids, as we saw in the introduction. What is peculiar about the function $h$ defined on $E$ is that we can associate to (almost) any collection of $k$ distinct points of $E$ a braid type. We use Le Calvez's description from \cite{leCalvez1999}. We define a map
\begin{equation*}
    \gamma: E\times [0, 2n]\rightarrow \R^2
    \end{equation*}
as follows: for $i\in\{0, \dots, n\}$,
\begin{equation*}
    \gamma(x, t)=
    \begin{cases}
    (2i+1-t)(x_{2i}, x_{2i+1})+(t-2i)(x_{2i+2}, x_{2i+1})& t\in[2i, 2i+1]\\
    (2i+2-t)(x_{2i+2}, x_{2i+1})+(t-2i-1)(x_{2i+2},x_{2i+3})& t\in[2i+1, 2i+2]  
    \end{cases}.
\end{equation*}
Here we extend the coordinates by periodicity, i.e. $x_{2n}=x_0, \dots, x_{2n+3}=x_3$.

The function $\gamma$ associates to each point in $E$ a loop in $\R^2$ of length $2n$; this association is furthermore injective. We may reparameterise $\gamma$ so that the associated loop has length 1, and we do so. If we assume now that $x\in W$ we obtain the following crucial relation between $L$ and $\gamma(x, \cdot)$:
\begin{equation}
    L(x)=\frac{1}{2}\mathrm{lk}(0, \gamma(x, \cdot))
\end{equation}
where $0$  denotes the constant loop based at 0 in the plane. Moreover, this identity behaves well when taking differences of points in $E$. In fact, as Le Calvez shows,
\begin{equation}
    L(x-y)=\frac{1}{2}\mathrm{lk}(\gamma(x, \cdot), \gamma(y, \cdot)).
\end{equation}

The pairs of points $x, y\in E$ with $x-y\in W$ are in fact precisely those for which $[\gamma(x, \cdot), \gamma(y, \cdot)]$ is a braid. Alternatively, $x-y\not\in W$ if and only if there exists a time $s\in[0, 1]$ such that
\begin{equation*}
    \gamma(x, t)= \gamma(y, t).
\end{equation*}
Let now $x^1, \dots,  x^k\in E$ be points so that for all $i\neq j\in\{1, \dots, k\}$ one has
\begin{equation*}
    x_i-x_j\in W.
\end{equation*}
The map
\begin{equation*}
    b': (x^1, \dots,  x^k)\mapsto [\gamma(x^1, \cdot), \dots, \gamma(x^k, \cdot)]
\end{equation*}
gives us the braid description we promised.

\begin{lem}\label{lem:braidTypesCoincide}
    Let $x^1, \dots, x^k$ be $k$ distinct critical points of $h$. Then, if $n$ is large enough, we have $b(x^0, \dots, x^k)=b'(x^0, \dots, x^k)$.
\end{lem}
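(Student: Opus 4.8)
The statement asserts that the two ways of associating a braid to a collection of critical points of $h$ — the ``intrinsic'' one $b$ (coming from Hamiltonian loops based at the corresponding fixed points of $\varphi$, via the general construction in the introduction) and Le Calvez's combinatorial one $b'$ (via the broken paths $\gamma(x^i,\cdot)$ in $\R^2$) — agree once the decomposition of $\varphi$ into twist maps is taken fine enough. The natural strategy is to interpolate: I would build, from the decomposition $\varphi = \Phi_{2n-1}\circ\cdots\circ\Phi_0$, an explicit Hamiltonian isotopy from the identity to $\varphi$ whose associated trace of a fixed point is isotopic (rel endpoints, through configurations) to the polygonal loop $\gamma(x^i,\cdot)$. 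Concretely, each $\Phi_j$ is $\mathscr{C}^1$-close to the identity (the rotations $R$ can be absorbed by conjugating the whole picture, or by noting that $R$ is isotoped to the identity through twist maps without creating collisions, since $R$ has a single fixed point at the origin and we are away from it), so each $\Phi_j$ comes with a short Hamiltonian isotopy $(\Phi_j^s)_{s\in[0,1]}$, and concatenating these gives an isotopy $(\varphi_t)_{t\in[0,1]}$ from $\mathrm{id}$ to $\varphi$. For a fixed point $x$ of $\varphi$ encoded by the critical point $(x_0,\dots,x_{2n-1})$ of $h$, the intermediate images $\Phi_{j-1}\circ\cdots\circ\Phi_0(x_0,\cdot)$ run through exactly the points $(x_j, x_{j\pm1})$ appearing in the definition of $\gamma$, by the generating-function equations for the $h_j$. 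So the trace of $x$ under this isotopy passes through the same vertices as $\gamma(x,\cdot)$, and between consecutive vertices it stays in a small ball (by $\mathscr{C}^1$-closeness), hence is homotopic rel endpoints to the straight segment used in $\gamma$.

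\textbf{Key steps, in order.} First, fix the isotopy: realize $\varphi$ as the time-$1$ map of a path that is the concatenation of the short isotopies of the $\Phi_j$; record that for each $j$ the image of the first-coordinate-loop at time $j/(2n)$ is the point $(x_j,x_{j+1})$ (using periodicity $x_{2n}=x_0$), so that the full trace of the fixed point $x$ and the polygonal loop $\gamma(x,\cdot)$ share these $2n$ vertices in order. Second, compare, on each of the $2n$ subintervals, the actual trace (an arc inside a small ball determined by the $\mathscr{C}^1$-size of $\Phi_j$) with the linear segment in $\gamma$: both arcs connect the same two endpoints inside a ball, so they are homotopic rel endpoints within that ball. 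Third, and this is where the hypothesis ``$n$ large enough'' is used in an essential way: do this simultaneously for all $k$ critical points $x^1,\dots,x^k$, and check the homotopies can be performed \emph{through configurations} (no collisions). Since $x^i - x^j \in W$ for $i\ne j$ (which holds for distinct critical points once $n$ is large, by the earlier discussion and Theorem~\ref{thm:LeCalLyapunov} controlling $L(x^i-x^j)$), the polygonal loops $\gamma(x^i,\cdot)$ are already pairwise disjoint in the $\mathrm{Conf}^k$-sense; and for $n$ large the balls in which the individual homotopies take place shrink, so the $k$ homotopies stay pairwise disjoint. Concatenating the segment-straightening homotopies over all $2n$ subintervals then gives a braid isotopy from $b'(x^1,\dots,x^k)$ to the braid traced by the Hamiltonian isotopy, which is by definition $b(x^1,\dots,x^k)$.

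\textbf{Main obstacle.} The delicate point is precisely the ``through configurations'' requirement in the third step: homotoping each broken path to a genuine flow line while never letting two of the $k$ paths touch. One has to make quantitative the intuition that refining the twist decomposition makes the correction arcs $\mathscr{C}^0$-small, and that the pairwise distances between the loops $\gamma(x^i,\cdot)$ — controlled by the fact that $x^i - x^j \in W$, i.e. the sign patterns of consecutive coordinates never degenerate — stay bounded below. A clean way to organize this is: choose $n$ large enough that each $\Phi_j$ is $\varepsilon_n$-$\mathscr{C}^1$-close to the identity with $\varepsilon_n \to 0$; bound the diameter of each correction ball by $C\varepsilon_n$; and separately observe that the minimal ``combinatorial distance'' between distinct critical points (the fact that $\gamma(x^i,\cdot)$ and $\gamma(x^j,\cdot)$ are disjoint loops) is a positive quantity independent of the straightening, so for $n$ large the correction balls cannot bridge the gap. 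Modulo this estimate — which is of the same flavour as the gluing estimates underlying Lemma~\ref{lem:bru} and Le Calvez's original construction — the identification of the two braids is then a routine homotopy argument, and one concludes $b(x^1,\dots,x^k)=b'(x^1,\dots,x^k)$ as classes in $\mathcal B_k = \pi_1(\mathrm{Conf}^k(\D))$.
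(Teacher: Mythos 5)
Your proposal follows the same strategy as the paper: identify the vertices of the polygonal loop $\gamma(x^i,\cdot)$ with iterates of the fixed point under partial compositions, show $\gamma(x^i,\cdot)$ is a $\mathscr{C}^0$-approximation of a Hamiltonian trajectory, and conclude by a small-displacement braid-isotopy argument for $n$ large. However, there is one genuine misstep that needs repair. You compare $\gamma(x^i,\cdot)$ to the trace of the \emph{concatenated} isotopy built from the $\Phi_j$, and you claim each $\Phi_j$ is $\mathscr{C}^1$-close to the identity so that the correction arcs live in small balls. This is false for half the factors: $\Phi_{2i}=R$ and the $R^{-1}$ inside $\Phi_{2i+1}$ are quarter-turn rotations, not $\mathscr{C}^1$-small, so during those subintervals the concatenated trajectory sweeps a large arc from $(x_{2i},y_{2i})$ to $(y_{2i},-x_{2i})$, whereas $\gamma$ stays within a ball of radius comparable to $\|\varphi_i\|_{\mathscr{C}^0}$. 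Your parenthetical about ``conjugating away'' or ``isotoping $R$ to the identity through twist maps'' does not repair this, since the rotations are interleaved with the $\varphi_i$ and cannot be factored out. The paper avoids this entirely by comparing $\gamma$ directly to $\phi^t_H$, noting that the vertex at time $2i$ is exactly $\varphi_{i-1}\circ\cdots\circ\varphi_0(x,y)=\phi^{i/n}_H(x,y)$ (because $x_{2i+1}=y_{2i}$, a consequence of $\Phi_{2i}=R$), and that $\phi^t_H$ stays $\mathscr{C}^0$-close to this vertex on $[i/n,(i+1)/n]$ because $\varphi_i$ is $\mathscr{C}^1$-small. Since $b$ is independent of the choice of isotopy by contractibility of $\mathrm{Ham}_c(\D)$, the comparison with $\phi^t_H$ suffices, and your step two then goes through as stated.

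A secondary remark: your third step attributes the pairwise disjointness of the loops $\gamma(x^i,\cdot)$ to Theorem~\ref{thm:LeCalLyapunov}. That is a misattribution — the relevant fact is that $x^i-x^j\in W$ for distinct critical points, which follows from the twist condition (positivity of intersections), not the Lyapunov property of $L$ along the gradient flow; moreover, the paper's argument does not actually need it, since the lower bound on pairwise distances comes for free from the trajectories $\phi^t_H(x^i,y^i)$ being images of distinct points under diffeomorphisms over a compact time interval. With the comparison target corrected to $\phi^t_H$, your argument matches the paper's.
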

\begin{proof}
    Let $H$ be a Hamiltonian generating $\varphi$ and suppose that $h$ is obtained by cutting the Hamiltonian isotopy defined by $H$ into $\mathscr{C}^1$-small pieces. For every integer $i\in\{0, \dots, n\}$, the curve $\gamma\vert_{[2i, 2i+2]}$ is by construction the union of two short segments. Their length is dependent on the $\mathscr{C}^0$-norm of $\varphi_i$. Denoting by $\phi_i(x,y)_x$ the $x$-coordinate of the point $\varphi_i(x, y)$ for any point $(x, y)$ in the plane, because of the shape of the decomposition in (\ref{eq:decompTwist}), we find that $\gamma\vert_{[2i, 2i+1]}$ is a horizontal segment between
    \begin{equation*}
        \varphi_{2i-1}\circ \cdots \varphi_{0}(x, y)
    \end{equation*}
     and
     \begin{equation*}
         ((\varphi_{2i}\circ\varphi_{2i-1}\circ \cdots \varphi_{0}(x, y))_x, (\varphi_{2i-1}\circ \cdots \varphi_{0}(x, y))_y).
     \end{equation*}
     In the case $i=0$, we have a short horizontal segment connecting
     \begin{equation*}
         (x, y)\text{ to } (\varphi_0(x, y)_x, y).
     \end{equation*}
     Instead, when considering $\gamma\vert_{[2i+1, 2i+2]}$, we see a short vertical segment connecting
     \begin{equation*}
         ((\varphi_{2i}\circ\varphi_{2i-1}\circ \cdots \varphi_{0}(x, y))_x, (\varphi_{2i-1}\circ \cdots \varphi_{0}(x, y))_y)
     \end{equation*}
     to
     \begin{equation*}
         \varphi_{2i}\circ\varphi_{2i-1}\circ \cdots \varphi_{0}(x, y).
     \end{equation*}
     The curve $\gamma$ (after reparameterisation so that it is defined on times in $[0,1]$) is therefore a $\mathscr{C}^0$-approximation of the path
     \begin{equation*}
         t\mapsto \phi^t_H(x, y).
     \end{equation*}
     Applying this remark to each of the $x^j$, for $j=1, \dots, k$, we obtain that the braid
     \begin{equation*}
         t\mapsto [\gamma(x^1, \cdot), \dots, \gamma(x^k, \cdot)]
     \end{equation*}
     is a strand-by-strand $\mathscr{C}^0$-approximation of the braid
     \begin{equation*}
         t\mapsto [\phi^t_H(x^1, y^1), \dots, \phi^t_H(x^k, y^k)]
     \end{equation*}
     where $(x^j, y^j)$ is the point in $\R^2$ associated to the point $x^j$ by definition\footnote{The two $x^j$ are different, the former is a real number, the latter a vector}. The two braids are therefore homotopic as soon as $n$ is big enough.
\end{proof}

\begin{nota}
    By virtue of the Lemma above, we shall now suppress the notation $b'$.
\end{nota}

In the case of homotopies of Morse data resulting in continuation maps between twist generating functions, we may provide an interpretation to the gradient lines as braid cobordisms. We can in fact define a function $\tilde\gamma$ on $E\times [0, 1]\times (-\delta, 1+\delta)$ simply by
\begin{equation*}
    \tilde\gamma(x, t, s):=\gamma(x, t).
\end{equation*}
Repeating the construction above with $\tilde\gamma$ yields the fact that a collection of gradient lines involved in the definition of a continuation map gives (as soon as the positive endpoints are all pairwise distinct) gives a braid cobordism between braids associated to collections of critical points. Remark that the times $s$ at which $L$ is not defined at a difference between two gradient lines correspond precisely to the times at which the braid cobordism fails to be a braid isotopy.

\subsection{Le Calvez's $h$ is a GFQI}
We now show that this $h$ is a generating function for $\varphi$ in the sense of Definitions \ref{defn:GF} and \ref{defn:QI} as seen through Equation (\ref{eq:discrHamEqn}). This is the content of Corollary \ref{cor:hIsGFQI}, stated below.
\vspace{0.5 cm}

A point $(x, y)\in\R^2$ is mapped to $(X, Y)$ if and only if there are (unique) points $(x_i, y_i)\in \R^2, \, i\in \{0, \dots, 2n\}$ such that $(x_0, y_0)=(x, y)$, $(x_{2n}, y_{2n})=(x', y')$, $\varphi_i(x_{i_1}, y_{i-1})=(x_i, y_i)$. This last condition is verified if and only if, since $h_i$ generates $f_i$,\[
\begin{cases}
y_{i-1}=-\partial_{x_{i-1}}h_i(x_{i-1}, x_i)=g_i(x_{i-1}, x_i)\\
y_{i}=\partial_{x_{i}}h_i(x_{i-1}, x_i)=g'_i(x_{i-1}, x_i)
\end{cases}.
\]

Let us assume that the decomposition ends in $R^{-1}\circ R$ (we are not losing in generality, we are essentially adding a trivial factor in the decomposition of $\varphi$ into $\mathscr{C}^1$-small factors); the order is important , since we want our maps in the decomposition to twist positively and negatively alternatively. The decomposition of $\varphi$ more concretely will have to look like this:
\begin{equation*}
   R^{-1}\circ R\circ \varphi_{n-1}\circ R^{-1}\circ \cdots \varphi_0\circ R^{-1}\circ R
\end{equation*}
where each $\varphi_i$ is $\mathscr C^1$-small.

Remember now that $R$ is generated by $(x, x')\mapsto -xx'$ and $R^{-1}$ by $(x, x')\mapsto xx'$. A Le Calvez generating function associated to this decomposition is therefore:
\begin{equation*}
h:\R^{2n+2}\rightarrow \R,\,\,(x_0, \dots, x_{2n+1})\mapsto -x_0x_1+\sum_{i=1}^{2n-1}h_i(x_i, x_{i+1})-x_{2n}x_{2n+1}+x_{2n+1}x_0.
\end{equation*}
We isolated the first and the last two terms because they are the ones we are going to differentiate in the following.

We now define the projection:
\[
\R^{2n+2}\rightarrow \R^2, \, (x_0, \dots, x_{2n+1})\mapsto (x_0, x_{2n+1}).
\]
Requiring the vertical differential to be 0 is then equivalent to asking for $\xi_i=0$ for $i=1, \dots, 2n$, i.e. (compare \cite{leCalvez1999}) for\[
\Phi_{i-1}(x_{i-1}, y_{i-1})=(x_i, y_{i}), \text{ for }i=1,\dots, 2n.
\]
A simple computation then gives:\begin{equation}\label{eq:genFunctionToConclude}
    \begin{cases}
        x_{2n+1}-x_1=\partial_{x_0}h(x_0, \dots, x_{2n+1})\\ -x_{2n}+x_0=\partial_{x_{2n+1}}h(x_0,\dots, x_{2n+1} )
    \end{cases}.
\end{equation}

Let us show how this is enough to conclude. The vanishing of the vertical differential ensures that $x_{2n+1}=y_{2n}$: in fact
\begin{equation*}
    \partial_{x_{2n}}h=0\Leftrightarrow x_{2n+1}=g'_{2n-1}(x_{2n-1}, x_{2n})
\end{equation*}
but, on the other side, we know ($\xi_{2n}=0$) that\begin{equation*}
    \Phi_{2n-1}(x_{2n-1}, y_{2n-1})=(x_{2n}, y_{2n})
\end{equation*}
that which is verified if and only if
\begin{equation*}
    \begin{cases}
        y_{2n-1}=g_{2n-1}(x_{2n_1}, x_{2n})\\
    y_{2n}=g_{2n-1}'(x_{2n_1}, x_{2n})
    \end{cases}
\end{equation*}
and the second equation allows us to conclude that\begin{equation*}
    x_{2n+1}=y_{2n}
\end{equation*}
as desired.

Going back to the formalism of the introduction,write
\begin{equation*}
    (X, Y):=\varphi(x, y)
\end{equation*}
then $(x_{2n}, y_{2n})=(X, Y)$. The equations (\ref{eq:genFunctionToConclude}) above then become\begin{equation*}
   \varphi(x_0, y_0)=(X,Y)\Leftrightarrow \begin{cases}
\partial^Vh=0\\
X-x=\partial_{Y}h(x_0=x,x_1, \dots, x_{2n}, x_{2n+1}=Y)\\
Y-y=-\partial_{x}h(x_0=x,x_1, \dots, x_{2n}, x_{2n+1}=Y)\\

\end{cases} 
\end{equation*}
and this is precisely the same equivalence as in (\ref{eq:discrHamEqn}). This shows that $h$ is indeed a generating function for $\varphi$ in the classical sense, but we do not know yet if we can do Morse theory with it: we need the Palais-Smale condition. A first step is proving that $h$ can be made quadratic at infinity by a linear map of $\R^{2n}$ that is also a gauge equivalence (Definition \ref{defn:elOp}).
\vspace{0.2cm}

If $\varphi$ has compact support, it is easy to see that every term in the decomposition can be assumed to coincide with the positive or the negative rotation outside a compact set (see \cite{leCalvez91}). Write
$\varphi=\Phi_{2n+1}\circ \cdots \circ \Phi_{0}$ as above. The generating function associated to the decomposition, outside of a compact set of $\R^{2n+2}$, coincides with
\begin{equation}\label{eq:hInfty}
    h_\infty(x_0, \dots, x_{2n+1})=\sum_{i=0}^{2n+1}(-1)^{i+1}x_ix_{i+1}
\end{equation}
which is a genuine quadratic form whose kernel has dimension 2: the Gram matrix associated to $h$ is a so-called Jacobi matrix
\begin{equation*}
    \begin{pmatrix}
        0&-1&0&\cdots&0&1\\
        -1&0&1&&&0\\
        0&1&0&\ddots&&\vdots\\
    \vdots&&\ddots&\ddots&-1&0\\
    0&&&-1&0&1\\
    1&0&\cdots&0&1&0
    \end{pmatrix}.
\end{equation*}
Such a bilinear form clearly has a two dimensional kernel, spanned by ($(e_j)$ is here the canonical basis of $\R^{2n+2}$)
\begin{equation*}
    \sum_{i=0}^{n}e_{2i}, \sum_{i=0}^{n}e_{2i+1}.
\end{equation*}
Completing the squares as follows gives a linear diffeomorphism of $\R^{2n+2}$ that preserves the fibres of the projection
\begin{equation*}
    \R^{2n+2}\rightarrow \R^2, (x_0;\dots, x_{2n+1})\mapsto (x_0, x_{2n+1})
\end{equation*}
and that makes $h$ non degenerate on such fibres.
\begin{lem}\label{lem:hinftyGauge}
    Up to a linear gauge equivalence,
    \begin{equation*}
        h_\infty(x_0,\dots, x_{2n+1})=\frac{1}{4}\sum_{i=1}^{2n}(-1)^ix_i^2.
    \end{equation*}
\end{lem}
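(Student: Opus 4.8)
The plan is to realize the required gauge equivalence as a composition of four explicit fibre‑preserving linear changes of the fibre coordinates $x_1,\dots,x_{2n}$ (each allowed to absorb multiples of the base coordinates $x_0,x_{2n+1}$, but never to alter $x_0$ or $x_{2n+1}$ themselves). Each such change is manifestly an invertible linear map commuting with the projection $(x_0,\dots,x_{2n+1})\mapsto(x_0,x_{2n+1})$, hence a gauge equivalence in the sense of Definition~\ref{defn:elOp}, and a composition of gauge equivalences is again one. Throughout I would work with the cyclic expression $h_\infty=\sum_{i=0}^{2n+1}(-1)^{i+1}x_ix_{i+1}=-x_0x_1+x_1x_2-x_2x_3+\cdots+x_{2n-1}x_{2n}-x_{2n}x_{2n+1}+x_{2n+1}x_0$, with the convention $x_{2n+2}=x_0$.

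First I would \emph{absorb $x_0$}: applying $x_{2k}\mapsto x_{2k}+x_0$ for all $k=1,\dots,n$ at once, each edge incident to a modified even coordinate produces one extra monomial $\pm x_0x_j$ with $j$ odd; because the coefficients alternate in sign these telescope and, together with the two pre‑existing $x_0$‑monomials $-x_0x_1$ and $x_{2n+1}x_0$, cancel identically (no $x_0^2$ is created), so $h_\infty$ becomes $q_1:=x_1x_2-x_2x_3+\cdots+x_{2n-1}x_{2n}-x_{2n}x_{2n+1}$, i.e.\ $h_\infty$ with $x_0$ set to $0$. Symmetrically I would \emph{absorb $x_{2n+1}$} by $x_{2k-1}\mapsto x_{2k-1}+x_{2n+1}$, $k=1,\dots,n$, reaching $q:=\sum_{i=1}^{2n-1}(-1)^{i+1}x_ix_{i+1}=x_1x_2-x_2x_3+\cdots+x_{2n-1}x_{2n}$, a form in $x_1,\dots,x_{2n}$ alone. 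Next I would \emph{decouple}: reading off the coefficient of each even coordinate gives $q=\sum_{k=1}^{n}x_{2k}u_k$ with $u_k:=x_{2k-1}-x_{2k+1}$ for $k<n$ and $u_n:=x_{2n-1}$ (it is essential to take $u_n=x_{2n-1}$, not $x_{2n-1}-x_{2n+1}$, so as not to reintroduce $x_{2n+1}$); since $(x_1,x_3,\dots,x_{2n-1})\mapsto(u_1,\dots,u_n)$ is unipotent triangular, hence invertible, after this change $q=\sum_{k=1}^{n}x_{2k-1}x_{2k}$. Finally I would \emph{diagonalise} each hyperbolic plane via $(x_{2k-1},x_{2k})\mapsto(x_{2k}-x_{2k-1},\,x_{2k-1}+x_{2k})$ and $uv=\tfrac14\big((u+v)^2-(u-v)^2\big)$, which turns $\sum_k x_{2k-1}x_{2k}$ into $\tfrac14\sum_{k=1}^{n}\big(x_{2k}^2-x_{2k-1}^2\big)=\tfrac14\sum_{i=1}^{2n}(-1)^ix_i^2$, the asserted form. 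Composing the four changes gives the desired linear gauge equivalence.

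I do not expect a genuine obstacle. The only delicate point is the sign bookkeeping in the two ``absorption'' steps, and the reason the cross terms telescope away is precisely the alternation of the coefficients $(-1)^{i+1}$ in $h_\infty$ — which in turn reflects the decomposition of $\varphi$ into \emph{alternately} left‑ and right‑deviating twist maps; for a cyclic form without this alternation the argument would fail (for one thing the rank need no longer be $2n$). One must also keep verifying that none of the four changes ever touches $x_0$ or $x_{2n+1}$, so that the composite is honestly fibre‑preserving over the projection to $(x_0,x_{2n+1})$. (One could instead complete the square directly in $x_1,\dots,x_{2n}$, but that would require checking that the relevant corner entries of the inverse of the tridiagonal Gram matrix of $q$ equal $0$, $0$ and $1$; the route above sidesteps this computation.)
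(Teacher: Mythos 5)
Your proof is correct, and it takes a genuinely different route from the paper's. The paper first establishes, by summing the telescoping identity $-x_{2i}x_{2i+1}+x_{2i+1}x_{2i+2}+x_{2n+1}x_{2i}=(x_{2i+2}-x_{2i})(x_{2i+1}-x_{2n+1})+x_{2i+2}x_{2n+1}$ over $i$, the closed form $h_\infty=\sum_{i=0}^{n-1}(x_{2i+2}-x_{2i})(x_{2i+1}-x_{2n+1})$, and then completes the square in each summand in one shot, producing a single linear change of fibre coordinates. Your argument instead factors the change through four transparent elementary moves: absorb $x_0$, absorb $x_{2n+1}$, triangular decoupling into hyperbolic planes, and plane-by-plane diagonalisation. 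Both are valid; what the paper's route buys is a one-line final substitution, at the cost of pulling the telescoping identity out of thin air, whereas your sequential route makes each step mechanical and visibly isolates where the alternation of signs in $h_\infty$ (hence the alternating twist directions in the decomposition of $\varphi$) is used — precisely the point you flag. A small bonus: your computation lands exactly on $\tfrac14\sum_{i=1}^{2n}(-1)^i x_i^2$ as stated, whereas the paper's substitution, read literally, produces $\tfrac14\sum_{i=1}^{2n}(-1)^{i+1}x_i^2$; the two differ only by the fibre-preserving swap of adjacent coordinates and so are gauge-equivalent, but your bookkeeping matches the lemma on the nose. The only thing worth spelling out if you wrote this up is the direction convention in the ``substitute/relabel'' steps (you use ``new $x$ in terms of old'' for the diagonalisation and ``old $x$ in terms of new'' for the absorptions), since a gauge equivalence acts by precomposition $S\mapsto S\circ\phi$; both conventions are fine because each of your maps is linear and invertible, but it is worth a sentence of clarification.
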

\begin{proof}
Let us remark that \begin{equation*}
    -x_{2i}x_{2i+1}+x_{2i+1}x_{2i+2}+x_{2n+1}x_{2i}=(x_{2i+2}-x_{2i})(x_{2i+1}-x_{2n+1})+x_{2i+2}x_{2n+1}.
\end{equation*}

We apply this identity inductively to show\begin{equation*}
    h_\infty(x_0,\dots, x_{2n+1})=\sum_{i=0}^{n-1} (x_{2i+2}-x_{2i})(x_{2i+1}-x_{2n+1}).
\end{equation*}

Every summand in the right hand side can be easily checked to be equal to
\begin{equation*}
    \frac{1}{4}\left\{(x_{2i+2}-x_{2i}+x_{2i+1}-x_{2n+1})^2-(x_{2i+2}-x_{2i}-x_{2i+1}+x_{2n+1})^2\right\}
\end{equation*}
so that 
\begin{equation*}
    h_\infty(x_0,\dots, x_{2n+1})=\frac{1}{4}\sum_{i=0}^{n-1}\left\{(x_{2i+2}-x_{2i}+x_{2i+1}-x_{2n+1})^2-(x_{2i+2}-x_{2i}-x_{2i+1}+x_{2n+1})^2\right\}.
\end{equation*}

We define the endomorphism by
\begin{align*}
    x_{2i+1}\mapsto x_{2i+2}-x_{2i}+x_{2i+1}-x_{2n+1}\\
    x_{2i+2}\mapsto x_{2i+2}-x_{2i}-x_{2i+1}+x_{2n+1}
\end{align*}
for $i$ between $0$ and $n-1$. We map $x_0$ and $x_{2n+1}$ to themselves as promised. The composition of $h$ with this linear, fibre-preserving diffeomorphism has the shape we were looking for.
\end{proof}
\begin{cor}
\label{cor:hIsGFQI}
    Up to a fibre preserving diffeomorphism generating functions of Le Calvez type are quadratic at infinity, of signature $n$ for a decomposition of length $2n+2$.
\end{cor}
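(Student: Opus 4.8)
\emph{Proof plan.} The statement consolidates three things that are by now in place, so my plan is to assemble them rather than to prove anything substantially new. First I would fix a decomposition $\varphi=\Phi_{2n+1}\circ\cdots\circ\Phi_{0}$ of length $2n+2$ ending in $R^{-1}\circ R$, so that the associated twist generating function $h$ lives on the total space of the trivial bundle $\R^{2n+2}\to\R^{2}$, $(x_{0},\dots,x_{2n+1})\mapsto(x_{0},x_{2n+1})$, with fibre $\R^{2n}$. The computation culminating in (\ref{eq:genFunctionToConclude}) shows, through the identification (\ref{eq:discrHamEqn}), that $h$ is a generating function for the push-forward of the graph of $\varphi$ to $T^{*}\Delta$ in the sense of Definition~\ref{defn:GF}; I would simply invoke that computation for the first of the three points.

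Next, since $\varphi$ is compactly supported each factor $\Phi_{i}$ can be taken to agree with the relevant rotation outside a compact set, so each $h_{i}$ agrees outside a compact set with its monomial $\pm x_{i}x_{i+1}$, and hence $h$ agrees at infinity with the quadratic form $h_{\infty}$ of (\ref{eq:hInfty}). I would then apply the linear, fibre-preserving gauge equivalence $A$ of Lemma~\ref{lem:hinftyGauge}: it fixes $x_{0}$ and $x_{2n+1}$ and only recombines the fibre coordinates, so $h\circ A$ is again a generating function for the same Lagrangian by the \emph{(Gauge equivalence)} operation of Definition~\ref{defn:elOp}, and by that lemma it agrees at infinity with $Q:=\tfrac14\sum_{i=1}^{2n}(-1)^{i}x_{i}^{2}$. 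Since $Q$ involves only the fibre coordinates $x_{1},\dots,x_{2n}$ and is non-degenerate there, $h\circ A$ is quadratic at infinity in the sense of Definition~\ref{defn:QI}. For the signature, $Q$ is diagonal in $x_{1},\dots,x_{2n}$ with eigenvalue $-\tfrac14$ on the $n$ odd indices and $+\tfrac14$ on the $n$ even indices, so the maximal negative-definite subspace is $n$-dimensional, i.e.\ $\sigma(h\circ A)=n$.

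The only delicate point, and the step I expect to need the most care, is the phrase ``agrees at infinity'': strictly speaking $h-h_{\infty}$ is supported in $\bigcup_{i}\{(x_{i},x_{i+1})\in K_{i}\}$, which is not compact in $\R^{2n+2}$ because the remaining coordinates are unconstrained. This is the usual friction between the non-compact base $\R^{2}$ and the literal Definition~\ref{defn:QI}, and I would dispose of it exactly as in Section~\ref{subsec:morseGFQI}: the deviation of $h$ from $h_{\infty}$ at infinity in the base directions produces only degenerate critical points, corresponding to fixed points outside $\supp\varphi$, which are irrelevant, and after the compactly supported radial perturbation described there one is in the honest GFQI situation with unchanged asymptotic form $Q$ and unchanged signature $n$. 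Granting that, the corollary follows by concatenating the three steps above.
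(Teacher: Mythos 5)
Your assembly matches the paper's (implicit) argument: the computation ending at (\ref{eq:genFunctionToConclude}) shows $h$ generates $\varphi$ in the sense of (\ref{eq:discrHamEqn}); each $h_i$ agrees with $\pm x_ix_{i+1}$ outside a compact set of $\R^2$, so $h$ agrees with $h_\infty$ away from the slabs; and Lemma~\ref{lem:hinftyGauge} supplies the linear fibre-preserving gauge equivalence diagonalising $h_\infty$ to $Q=\tfrac14\sum_{i=1}^{2n}(-1)^ix_i^2$, whose maximal negative-definite subspace is spanned by the $n$ odd-index fibre coordinates, giving signature $n$. This is exactly what the paper leaves implicit, and the signature count is correct.

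You are also right that $h-h_\infty$ is supported on $\bigcup_i\{(x_i,x_{i+1})\in K_i\}$, a finite union of slabs that is unbounded in $\R^{2n+2}$, so Definition~\ref{defn:QI} is not literally met and the paper's phrase ``outside of a compact set of $\R^{2n+2}$'' is loose. However, your proposed repair does not work: a compactly supported perturbation, radial or otherwise, leaves $h-h_\infty$ unchanged at infinity and hence cannot create compact support where there was none. The perturbation of Section~\ref{subsec:morseGFQI} is introduced to obtain a Morse function with finitely many critical points, not to repair the literal GFQI condition. The honest way to close this gap is to note that what the paper uses downstream is the Palais-Smale property of $(h\circ\psi,\psi^*g)$, which it establishes in the paragraph immediately following the corollary via Lemma~\ref{lemma:filtrChangeMetric}; alternatively, appeal to the relaxed notion of GFQI that the remark after Definition~\ref{defn:QI} alludes to.
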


\paragraph{Palais-Smale condition}

Let $\psi\in \mathrm{GL}(\R^{2n})$ be the linear map defined in the previous section, and let $g$ be the standard Euclidean product on $\R^{2n}$. Le Calvez's results (Theorem \ref{thm:LeCalLyapunov}, the existence of the Dominated Splitting and the Homothety Law (\ref{eq:homothety}) discussed in the next section) hold for the pair $(h, g)$, hence also for the pair $(h\circ \psi, \psi^*g)$, since there is a clear bijection between the flow lines of the two pairs. We are now interested in showing that $(h\circ \psi, \psi^*g)$ is Palais-Smale, to achieve the good definition of the Morse complex (up to small perturbation of the metric) on the one hand, and to preserve the Lyapunov property of the linking number on the other.

The Palais-Smale property for $(h\circ \psi, g)$ is clear because $h\circ \psi$ is quadratic at infinity. This implies what we want, by the following obvious lemma:

\begin{lem}\label{lemma:filtrChangeMetric}
    Let $f$ be a function on $\R^k$, $A\in \mathrm{GL}_{k}(\R)$, and $g$ an inner product on $\R^k$. If $(f, g)$ satisfies the Palais-Smale condition, so does $(f, A^*g)$.
\end{lem}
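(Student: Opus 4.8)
The statement to prove is Lemma~\ref{lemma:filtrChangeMetric}: if $(f,g)$ satisfies Palais--Smale, then so does $(f, A^*g)$ for $A\in\mathrm{GL}_k(\R)$.

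The plan is to unwind the definition of Palais--Smale in terms of the gradient. Recall that the Palais--Smale condition asks: any sequence $(x_n)$ with $f(x_n)$ bounded and $\norm{\nabla f(x_n)}\to 0$ admits a convergent subsequence. The only subtlety is that the gradient $\nabla_g f$ depends on the metric: it is the vector field dual to $df$ via $g$, i.e.\ $g(\nabla_g f, \cdot) = df(\cdot)$. First I would write down the relation between $\nabla_{A^*g}f$ and $\nabla_g f$. Writing $g$ as the Gram matrix $G$ (here the standard Euclidean product, so $G=\mathrm{Id}$, but the argument is identical in general), the metric $A^*g$ has Gram matrix $A^T G A$, and one computes $\nabla_{A^*g}f = (A^T G A)^{-1}\nabla_{\mathrm{eucl}} f = A^{-1}G^{-1}(A^T)^{-1}\nabla_{\mathrm{eucl}}f$. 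So the two gradient vector fields differ by the application of a fixed invertible linear map.

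Second, since $A$ is a fixed element of $\mathrm{GL}_k(\R)$, both $A$ and $A^{-1}$ are bounded linear operators: there are constants $0<c\le C$ with $c\norm{v}_{\mathrm{eucl}}\le \norm{A^{-1}(A^T)^{-1}v}_{\mathrm{eucl}}\le C\norm{v}_{\mathrm{eucl}}$ for all $v$ (one can use the smallest and largest singular values of $A^{-T}A^{-1}$). Moreover on a finite-dimensional space all norms are equivalent, so the norm induced by $A^*g$ is equivalent to the Euclidean one, with uniform constants. Consequently $\norm{\nabla_{A^*g}f(x_n)}_{A^*g}\to 0$ if and only if $\norm{\nabla_g f(x_n)}_g\to 0$; and $f(x_n)$ bounded is a metric-independent condition. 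Hence a sequence is Palais--Smale for $(f,A^*g)$ exactly when it is Palais--Smale for $(f,g)$, and convergence of a subsequence is also metric-independent (same topology). Therefore the Palais--Smale property transfers verbatim.

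There is really no main obstacle here --- this is the ``obvious lemma'' the text advertises; the only thing to be careful about is bookkeeping with the metric-versus-Euclidean norms and making explicit that the constants are uniform in $x$ because $A$ is constant (it does not vary over $\R^k$). I would present it in two or three lines: state the identity $\nabla_{A^*g}f = A^{-1}(A^T)^{-1}\nabla_g f$ (taking $g$ Euclidean), note the equivalence of the norms $\norm{\cdot}_{g}$ and $\norm{\cdot}_{A^*g}$ with constants independent of the point, and conclude that the defining sequences and their convergent subsequences coincide for the two pairs.
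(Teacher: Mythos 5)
Your proof is correct and follows essentially the same route as the paper: show that a Palais--Smale sequence for $(f,A^*g)$ is also one for $(f,g)$ by comparing the two gradients and using that $A$ is a fixed invertible linear map (so the operator norms of $A$ and $A^{-1}$ give uniform constants). The paper states this more tersely, but the underlying argument is identical; your version is just more explicit about the gradient identity and the norm equivalence.
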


\begin{proof}
    The conclusion is very easy to see: given a sequence of points $x_j$ such that $\abs{f(x_j)}$ is bounded and $\norm{\nabla^{A^*g}f(x_j)}_{A^*g}\to 0$, we have to check that\begin{equation}\label{eq:nablaTo0}
        \norm{\nabla^{g}f(x_j)}_{g}\to 0
    \end{equation}
    and then apply the fact that $(f, g)$ is Palais-Smale to deduce the existence of a convergent subsequence. The limit (\ref{eq:nablaTo0}) is readily verified by standard estimates obtained using the operator norm of $A$.
\end{proof}

\begin{rem}
    The proof clearly works replacing $A$ with a diffeomorphism whose differential is bounded.
\end{rem}

In Section \ref{subsec:existenceFiltration} we are going to apply the construction from Section \ref{subsec:genFuns} to therefore define the Morse complex $CM(h, g)$, for some Riemannian metric $g$ on $\R^{2n}$.

\section{Proof of Theorem \ref{thm:A}}\label{sec:definitionFiltration}
\subsection{The dominated splitting}
Le Calvez's work \cite{leCalvez1999} provides us with another useful tool, a ``dominated splitting'' (the original wording being ``décomposition subordonnée'') of $TE$. The content of the following paragraph may be found in  \cite[Proposition 3.2.1]{leCalvez1999}.
\vspace{0.2 cm}

Let $\psi:\R\times E\rightarrow E$ be the flow of $\xi$ (the flow being complete is a result of Le Calvez). Define, for $j\in \left\{-\floor*{\frac{n}{2}}, \dots, \floor*{\frac{n}{2}}\right\}$, the subset of $T_xE=E$\begin{equation}
E_j(x)=\Set{v\in T_xE\,\vert\, \forall t\in \R,\,\,L(d_x\psi^t(v))=j}\cup \{0\}.
\end{equation}

The first result one needs to be aware of is that $E_j(x)$ is in fact a vector subspace of $E$. This is not immediately clear, since one from the definition only has invariance under scalar multiplication. Le Calvez also computes its dimension: if $n$ is even and $j=\pm\floor*{\frac{n}{2}}$ then $\dim E_j(x)=1$, in all other cases $\dim E_j(x)=2$. The $E_j$ form a decomposition of the tangent bundle in the sense that
\begin{equation}\label{eq:domDecomp}
    T_xE=\bigoplus_{j\in \left\{-\floor*{\frac{n}{2}}, \dots, \floor*{\frac{n}{2}}\right\}} E_j(x)
\end{equation}
for all $x\in E$.

The decomposition is subordinated to the flow in the sense that it is compatible with it: for all $x\in E$ and $t\in \R$ we have the equality
\begin{equation}
    d_x\psi^tE_j(x)=E_j(\psi^t(x)).
\end{equation}

We also define, for each $j$ in the image of $L$, the spaces\begin{equation}\label{eq:E+-}
    E^+_j(x)=\bigoplus_{j\leq k\leq \floor*{\frac{n}{2}}}E_k(x),\,\,\,E^-_j(x)=\bigoplus_{j\geq k\geq -\floor*{\frac{n}{2}}}E_k(x).
\end{equation}
Equivalently, $E_j^+(x)$ (resp. $E_j^-(x)$) is the set of vectors $u$ in $T_xE$ such that\begin{equation*}
    L(d_x\psi^t. u)\geq j\text{ (resp. }L(d_x\psi^t. u)\leq j\text{) }\forall t\in \R.
\end{equation*}

The last property will prove crucial in the next section, as we are going to use it to prove the existence of said filtration. 
\begin{prp}\label{prp:homLaw}

    Let $u_j\in E_j(x)$ and $u_k\in E_k(x)$ be two vectors of norm 1. We assume that $j<k$. Then, along the linearised flow of $\xi$, the norm of $u_j$ shrinks much faster than the one of $u_k$ does. More formally, there exists a $\lambda\in(0, 1)$ and a positive constant $C$ such that for all positive time $t>0$ we have:
\begin{equation}\label{eq:homothety}
\frac{\norm{d_x\psi^tu_j}}{\norm{d_x\psi^tu_k}}\leq C\lambda^t.
\end{equation}
Here the symbol $\norm{\cdot}$ indicates the norm of the standard Euclidean product on $E$, which we denoted $g$ above. We call (\ref{eq:homothety}) the ``homothety law''.
\end{prp}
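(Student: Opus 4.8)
The plan is to extract the homothety law directly from the defining property of the dominated splitting together with the Lyapunov behaviour of $L$ in Theorem \ref{thm:LeCalLyapunov}. The starting observation is that for a unit vector $u_j\in E_j(x)$, the function $t\mapsto L(d_x\psi^t u_j)$ is constantly equal to $j$ by definition, and similarly for $u_k$. What we really want is a statement about the \emph{derivative} of the logarithm of the norm along the linearised flow. So first I would set $f_j(t):=\log\norm{d_x\psi^t u_j}$ and $f_k(t):=\log\norm{d_x\psi^t u_k}$, and aim to show that $f_j(t)-f_k(t)\le \log C + t\log\lambda$ for suitable $\lambda\in(0,1)$, $C>0$, which is exactly \eqref{eq:homothety}.

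The core mechanism I expect is the following. Consider the sum $w(t):=d_x\psi^t u_j + d_x\psi^t u_k$, which is the image under $d_x\psi^t$ of $u_j+u_k\in E_j(x)\oplus E_k(x)$. Since $u_j+u_k\notin E_j(x)$ and $\notin E_k(x)$ (it has nontrivial components in two distinct summands), the function $t\mapsto L(w(t))$ is not constant; by Theorem \ref{thm:LeCalLyapunov} it is non-decreasing in the sense that its jumps are all strictly positive, and one checks it must take values strictly between $j$ and $k$ at generic times while having appropriate one-sided limits $j$ and $k$ at $t\to\pm\infty$ — this is precisely the content Le Calvez establishes about $E^\pm$ in \eqref{eq:E+-}. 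The key quantitative point is that $L$ being defined at $w(t)$ forces $w(t)\in W$, i.e. a geometric ``transversality/non-degeneracy'' condition on the coordinates, and the rate at which $L(w(t))$ must eventually stabilise to its limit value $j$ (as $t\to -\infty$) versus $k$ (as $t\to +\infty$) controls the relative size of the two components. Concretely: as $t\to+\infty$ the $E_k$-component dominates (else $L$ could not approach $k$), and the slower decay of the $E_k$-direction compared to the $E_j$-direction is forced to be exponential because the flow is generated by a gradient-type vector field whose linearisation has bounded coefficients on the relevant invariant cone, so the Lyapunov exponents in the two summands are separated by a positive gap. I would make this precise by using compactness: the function $x\mapsto$ (ratio of norms of the $E_j$ and $E_k$ projections of the unit sphere in $E_j(x)\oplus E_k(x)$) together with the flow-invariance of the splitting gives a subadditive cocycle estimate, and the strict inequality in Theorem \ref{thm:LeCalLyapunov} (each jump of $L$ is by at least $\tfrac12$, hence bounded below) prevents the ratio from decaying only sub-exponentially.

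More carefully, the cleanest route is probably to invoke the abstract fact that a continuous splitting $TE=\bigoplus_j E_j$ invariant under a flow, for which the partial order on indices is respected by a Lyapunov-type function that strictly increases across discontinuities, is automatically a \emph{dominated} splitting in the standard dynamical-systems sense; and domination is by definition the estimate \eqref{eq:homothety}. The work then reduces to verifying the hypotheses of that abstract fact: (i) continuity of $x\mapsto E_j(x)$, which Le Calvez proves; (ii) flow-invariance, stated in the excerpt; (iii) the ``no sub-exponential contact'' condition, which follows from Theorem \ref{thm:LeCalLyapunov} because a vector in $E_j^+(x)\setminus E_j(x)$ has $L$ along its orbit taking at least two distinct values with a strict jump, bounding how long the $E_{>j}$-component can remain comparable in size to the $E_j$-component. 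One then bootstraps from the case $k=j+1$ (adjacent indices) to general $j<k$ by composing the estimates telescopically, absorbing the finitely many constants $C$ into a single one.

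I expect the main obstacle to be step (iii): turning the purely topological/combinatorial statement ``$L$ jumps, and only upward'' into a genuine \emph{exponential} rate. The subtlety is that Theorem \ref{thm:LeCalLyapunov} by itself only says $L\circ w$ is monotone-with-jumps, not that it stabilises at a definite rate; to get exponential domination one must feed in the specific structure of $h$ — that $\xi=-\nabla_g h$ with $h$ quadratic at infinity, so the linearised flow $d_x\psi^t$ is conjugate (after the gauge equivalence $\psi$ of Lemma \ref{lem:hinftyGauge}) to a flow with uniformly bounded, eventually constant generator — which yields uniform bounds on Lyapunov exponents and hence a spectral gap between consecutive $E_j$. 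Packaging this uniformity correctly across all base points $x$ (using that everything is asymptotically the fixed quadratic form $h_\infty$) is where the real care is needed; the rest is bookkeeping with operator norms as in Lemma \ref{lemma:filtrChangeMetric}.
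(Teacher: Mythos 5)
The paper does not actually give a proof of this Proposition: immediately after the statement it simply cites ``\cite[Lemma 3.2.2]{leCalvez1999}''. So the benchmark for comparison is Le Calvez's original technical lemma, not an argument internal to this paper, and your proposal is an attempt at a reconstruction rather than a shadow of the paper's own proof.

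There is a genuine gap, and you locate it yourself in step~(iii) but do not close it. The ``abstract fact'' you invoke --- that a continuous, flow-invariant splitting together with a Lyapunov-type function whose discontinuities are all strictly upward jumps is automatically a dominated splitting in the exponential sense --- is false. Monotonicity of $L$ along orbits (Theorem~\ref{thm:LeCalLyapunov}) is a purely topological statement about how the trajectory $t\mapsto d_x\psi^t(u_j+u_k)$ crosses the discontinuity locus of $L$; it constrains the order in which the $E_m$-components can dominate, but it bounds neither the number of crossings per unit time from below nor the size of the angle with the walls of $W$ from below, so one cannot extract any rate, let alone an exponential one, from it alone. Likewise, the step where you try to supply the missing rate is a non sequitur: knowing that $\xi=-\nabla_g h$ with $h$ quadratic at infinity gives you that the generator of the linearised flow is globally bounded (and eventually constant), which yields \emph{upper} bounds of the form $\lVert d_x\psi^t\rVert\le Ce^{Kt}$, but says nothing about a spectral gap \emph{between} the $E_j$ and $E_k$ directions. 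A bounded generator can easily have a Jordan block or two eigenvalues agreeing to sub-exponential order, in which case domination fails.

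The missing ingredient is the specific algebraic structure of the twist generating function: the linearised flow is governed by a symmetric Jacobi-type (tridiagonal plus corner terms) operator, and Le Calvez's Lemma~3.2.2 exploits this structure, together with the fact that $L$ only takes the finitely many values $\{-\floor*{\tfrac n2},\dots,\floor*{\tfrac n2}\}$ and that the $E_j$ have the explicit dimensions recorded after~\eqref{eq:domDecomp}, to produce a genuine uniform exponential separation. Your proposal never engages with that structure; without it the passage from ``jumps of $L$'' to ``$C\lambda^t$'' is precisely the content of the proposition, not a bookkeeping step. Bootstrapping adjacent indices is fine once domination between neighbours is known, but the base case is the hard part and remains unproved here.
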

This is proved in \cite[Lemma 3.2.2]{leCalvez1999}.

\begin{rem}
    When comparing with \cite{leCalvez1999}, the reader should be mindful of the fact that here $\xi$ is the negative gradient of $h$, in contrast with Le Calvez's convention. In his conventions, the function $L$ is decreasing along pairs of flow lines, and in Equation \ref{eq:homothety} one needs to swap numerator and denominator.
\end{rem}
\subsection{Existence of the filtration}\label{subsec:existenceFiltration}
The main idea we are going to exploit here is that the way the linearised flow of $\xi$ at a critical point changes the norms of unit vectors is governed by two phenomena. The first, which is classical, is simply given by the eigenvalues of the Hessian of $h$ at the critical point. The second phenomenon is the homothety law contained in Equation \ref{eq:homothety}. The two phenomena will turn out to be clearly not independent, and their interplay will yield the value that $I$ should have on the diagonal; equivalently, it will yield a well-defined notion of self-linking number of a fixed point of $\varphi$. We assume here that $\varphi$ is non degenerate (and $h$ Morse as a consequence).

We start by fixing a critical point $x$ of $h$, and we take two other points $x^1, x^2\in E$ such that\begin{equation}
    \lim_{t\to+\infty}x^i(t)=x, \,\, x=1,2
\end{equation}
and which belong to different gradient lines, i.e.\begin{equation*}
    \forall t\in \R, x^1(t)\neq x^2.
\end{equation*}
\begin{rem}
    This allows for $x^1$ or $x^2$ to be a critical point, but not both.
\end{rem}
We take the difference vector and we normalise it:
\begin{equation}
    \R\rightarrow \sphere^{2n-1}, t\mapsto v(t):=\frac{x^1(t)-x^2(t)}{\norm{x^1(t)-x^2(t)}}.
\end{equation}
We denote by $v(\infty)$ the limit-set at positive infinity:
\begin{equation}
    v(\infty)=\Set{z\in \sphere^{2n-1}\vert \exists (t_k)\subset \R, t_k\to+\infty, v(t_k)\to z}\subseteq T_xE.
\end{equation}
The first lemma shows that in fact any vector in $v(\infty)$ is tangent to the stable manifold of $\xi$ at $x$.
\begin{figure}
         \centering
        \input{./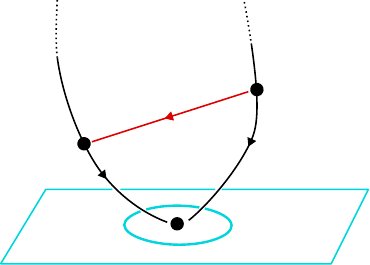_tex}
\end{figure}

\begin{lem}\label{lemma:vInfStable}
    $v(\infty)\subset T_xW^{s}(x)$.
\end{lem}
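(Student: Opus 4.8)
The plan is to reduce everything to the local structure of the stable manifold at $x$. Since $\varphi$ is non degenerate, $h$ is Morse, so $\xi=-\nabla_g h$ has a hyperbolic zero at $x$; write $T_xE=E^s\oplus E^u$ for the splitting into the stable subspace of the linearisation $d_x\xi$ (the one on which $d_x\psi^t$ contracts as $t\to+\infty$) and its unstable complement, and recall $E^s=T_xW^s(x)$. By the stable manifold theorem applied to the smooth complete flow $\psi^t$ at the hyperbolic rest point $x$, there are a neighbourhood $U$ of $0$ in $E^s$ and a $\mathscr{C}^1$ map $\Gamma\colon U\to E^u$ with $\Gamma(0)=0$ and $d_0\Gamma=0$ such that near $x$
\[
W^s(x)=\Set{x+\eta+\Gamma(\eta)\vert\eta\in U}.
\]
Since $d\Gamma$ is continuous and vanishes at $0$, after shrinking $U$ we may assume $\norm{\Gamma(\eta)-\Gamma(\eta')}\le\varepsilon\norm{\eta-\eta'}$ on $U$, with $\varepsilon$ as small as we wish.

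Next I would push both trajectories into this model. As $x^1(t),x^2(t)\to x$, for $t$ large both points lie in the above neighbourhood, so $x^i(t)=x+\eta^i(t)+\Gamma(\eta^i(t))$ with $\eta^i(t)\in U$ and $\eta^i(t)\to 0$. Subtracting,
\[
x^1(t)-x^2(t)=a(t)+b(t),\qquad a(t):=\eta^1(t)-\eta^2(t)\in E^s,\quad b(t):=\Gamma(\eta^1(t))-\Gamma(\eta^2(t))\in E^u,
\]
and the previous estimate gives $\norm{b(t)}\le\varepsilon(t)\norm{a(t)}$ with $\varepsilon(t)\to0$. Moreover $a(t)\neq0$ for all large $t$: the graph parametrisation is injective, so $\eta^1(t)=\eta^2(t)$ would force $x^1(t)=x^2(t)$, against the assumption that the two lie on distinct flow lines. (If one of the $x^i$ is a critical point it must be $x$, since it converges to $x$; then that $\eta^i\equiv0$ and nothing changes.)

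Finally I would normalise: with $\hat a(t):=a(t)/\norm{a(t)}\in E^s\cap\sphere^{2n-1}$ and $c(t):=b(t)/\norm{a(t)}$, so $\norm{c(t)}\le\varepsilon(t)\to0$, a one-line estimate on $(\hat a+c)/\norm{\hat a+c}$ gives
\[
v(t)=\frac{x^1(t)-x^2(t)}{\norm{x^1(t)-x^2(t)}}=\hat a(t)+o(1)\qquad(t\to+\infty).
\]
Thus every element of $v(\infty)$ is an accumulation point of the curve $\hat a(t)$, and $E^s$ being closed this yields $v(\infty)\subset E^s=T_xW^s(x)$.

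I do not expect a serious obstacle here; the only point requiring care is that $b(t)$ must be small \emph{relative to} $a(t)$, not merely small, which is exactly what the tangency $d_0\Gamma=0$ (i.e. non-degeneracy of $x$) provides. As an alternative one could observe that $w(t)=x^1(t)-x^2(t)$ solves a linear equation $\dot w=-(d_x\xi+R(t))w$ with $R(t)\to0$ and run a cone/Gronwall argument ruling out a nonzero $E^u$-component in the limit, but the stable-manifold parametrisation above seems the most economical.
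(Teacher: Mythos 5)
Your proof is correct and takes a genuinely different route from the paper's. The paper passes to a Morse chart centred at $x$, adjusts the metric so that the negative gradient flow becomes linear there, and then produces explicit exponential asymptotics $\norm{x^i(t)-x}\sim C^i(x,t)e^{-\mu_{x^i}t}$, a cosine-rule estimate for $\norm{x^1(t)-x^2(t)}$, and an eigenbasis expansion of the Hessian quadratic form along $v(t)$, concluding with a case analysis according to whether $\mu_{x^1}=\mu_{x^2}$; it also assumes at the outset that $v$ converges, deferring the justification to the following lemma, whose proof in turn relies on these very asymptotics. Your route circumvents all of this by invoking the stable manifold theorem directly: the local graph parametrisation of $W^s(x)$ over $E^s$ with $d_0\Gamma=0$ yields the key inequality $\norm{b(t)}\le\varepsilon(t)\norm{a(t)}$ with $\varepsilon(t)\to0$, which is precisely the relative smallness you correctly identify as the heart of the matter. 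This is cleaner: no Morse chart, no change of metric, no case split on eigenvalues, and no a priori assumption that $v$ converges. The trade-off is that your argument only places the limit set $v(\infty)$ inside the closed set $E^s\cap\sphere^{2n-1}$, whereas the paper's explicit eigenbasis asymptotics are reused in the subsequent lemma to show that $v(\infty)$ is a single point; that further claim would not follow from your argument without supplementary work (for instance, the Gronwall/cone route you sketch in your closing paragraph, or a Hartman--Grobman linearisation, would be one way to obtain the convergence of $\hat a(t)$).
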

\begin{proof}
For this proof, we assume without loss of generality that $v$ admits a limit at $+\infty$ (i.e. $v(\infty)$ is a point). The fact that this is the case is going to be proven in the following lemma.

It suffices to check that the quantity \begin{equation}\label{eq:tangentSpaceHessian}
    \frac{1}{\norm{x^1(t)-x^2(t)}^2}\mathcal{H}h(x)(x^1(t)-x^2(t), x^1(t)-x^2(t))
\end{equation}
where $\mathcal{H}h(x)$ is the Hessian of $h$ at $x$, is positive at the limit. Let us consider a Morse chart centred at $x$, and assume that the metric in the chart is Euclidean: we may assume this if we allow for changes in the Riemannian metric on $E$, which we now do.

Given that $x^i(t)\to x$ as $t\to\infty$, we may assume without loss of generality that $x^i(t)$ (for $i=1, 2$) and $x^1(t)-x^2(t)$ belong in the Morse chart. Working in said chart, we are now going to prove the estimates\[
\norm{x^i(t)-x}\sim C^i( x, t)e^{-\mu_{x^i}t}, \,\,i=1,2
\]
where the $\mu_{x^i}$ are the lowest eigenvalues of $\mathcal{H}h(x)$ appearing in an expression of $x^i(t)$ in the Morse chart around $x$. To simplify the notation, let us suppose without loss of generality that $x=0$. Denote by $\phi$ the Morse chart around $x$.
\begin{align*}
    \norm{x^i(t)}_E=\frac{\norm{x^i(t)}_E}{\norm{\phi^{-1}x^i(t)}}\norm{\phi^{-1}x^i(t)}\sim\\\sim
    \frac{\norm{x^i(t)}_E}{\norm{\phi^{-1}x^i(t)}}\norm{x^i}e^{-\mu_{x^i}t}.
\end{align*}
The terms $C^i(x, t):=\frac{\norm{x^i(t)}_E}{\norm{\phi^{-1}x^i(t)}}$ are positive and bounded in $t$, as follows from a Taylor expansion to the first order of $\phi^{-1}$. They do not tend to $0$ as $t\to +\infty$ since $\phi^{-1}$ has invertible differential.
Furthermore, as $x^i(t)\to x$, both eigenvalues $\mu_{x^i}$ are positive.
\vspace{0.2 cm}

We can use this description to find a function $C^{1, 2}(\cdot)$ defined for large times, such that $\norm{x^1(t)-x^2(t)}\sim C^{1, 2}(t)e^{-\mu t}$. To do so, simply apply cosine formula, keeping in mind that the cosine of the angle spanned by the vectors $(x^1(t)-x, x^2(t)-x)$ cannot be 1 since the flow of the gradient is conjugated by a diffeomorphism to a radial vector field. If $\delta(t)$ is the cosine, we have
\begin{align*}
    \norm{x^1(t)-x^2(t)}^2=\norm{x^1(t)-x}^2+\norm{x^2(t)-x}^2-2\delta(t)\norm{x^1(t)-x}\norm{x^2(t)-x}\sim\\\sim C^1(x, t)^2e^{-2\mu_{x^1}t}+C^2(x, t)^2e^{-2\mu_{x^2}t}-2\delta(t) C^1(x, t)C^2(x, t)e^{-(\mu_{x^1}+\mu_{x^2})t}=\\=[C^1(x, t)^2+C^2(x, t)^2e^{-2(\mu_{x^2}-\mu_{x^1})t}-2\delta(t) C^1(x, t)C^2(x, t)e^{-(\mu_{x^2}-\mu_{x^1})t}]e^{-2\mu_{x^1}t}.
\end{align*}
For large $t$ \begin{equation*}
    C^1(x, t)^2+C^2(x, t)^2e^{-2(\mu_{x^2}-\mu_{x^1})t}-2\delta(t) C^1(x, t)C^2(x, t)e^{-(\mu_{x^2}-\mu_{x^1})t}>0
\end{equation*}and it does not tend\footnote{This an application of the standard inequality $a^2+b^2\geq 2ab$, where both $a$ and $b$ are positive.} to 0, as $\delta <1-\varepsilon$ for large enough $t$. 
Without loss of generality, we assume $\mu_{x^2}-\mu_{x^1}\geq 0$: the quantity above is then bounded as a function of $t$. We denote it by $C^{1,2}(t)$.

Now, let $(v_1, \dots, v_{2n})$ be a basis of $E$ in eigenvectors for $\mathcal{H}h(x)$. Let $\lambda_i$ be the eigenvalue of $v_i$. We can write in this basis the directions $x^1(t)-x$ and $ x^2(t)-x$, defining functions $\alpha_i, \beta_i:\R\rightarrow \R$.\[
\frac{x^1(t)-x}{\norm{x^1(t)-x}}=\sum_i\alpha_i(t)v_i,\,\,\,\frac{x^2(t)-x}{\norm{x^2(t)-x}}=\sum_i\beta_i(t)v_i.
\]
Using the obvious identity $x^1(t)-x^2(t)=x^1(t)-x+x-x^2(t)$, we can now expand the quantity (\ref{eq:tangentSpaceHessian}): it is thus asymptotic to \[
\frac{1}{C^{1,2}(t)^2}\sum_i\left[ C^1(x, t)\alpha_i(t)-C^2(x, t)e^{-(\mu_{x^2}-\mu_{x^1})t}\beta_i(t) \right]^2\lambda_i.
\]

Since $C(x^1, x^2, \cdot), C(x^1, x, \cdot), C(x^2, x, \cdot)$ are positive and bounded, $\mu_{x^2}-\mu_{x^1}\geq 0$, and whenever $\lambda_j<0$ we have $\alpha_j(t), \beta_j(t)\to 0$, the limit for $t\to+\infty$ of (\ref{eq:tangentSpaceHessian}) is positive or 0. Also, at least one of the terms in the sum does not vanish at the limit. If $\mu_{x^1}\neq \mu_{x^2}$, since $\alpha_i(t), \beta_i(t)$ are also bounded and $C^1(x, t)$ does not tend to 0 at infinity, it suffices to notice that $C^1(x, t)\alpha_i(t)\to 0$ for all $i$ implies that $\alpha_i(t)\to 0$ for all $i$. This is a contradiction with the fact that the $\alpha_i$ are coordinates of a unit vector, and there is at least a nonzero element in the sum: (\ref{eq:tangentSpaceHessian}) is positive in this case. If instead $\mu_{x^1}=\mu_{x^2}$, the proof is a bit more involved. We start from noticing that, the limit above being 0, it would imply
\[
\lim_{t\to+\infty}C^1(x, t)\frac{x^1(t)-x}{\norm{x^1(t)-x}}-C^2(x, t)\frac{x^2(t)-x}{\norm{x^2(t)-x}}=0
\]
which in turn yields ($\mu:=\mu_{x^1}=\mu_{x^2}>0$)

\[
\lim_{t\to+\infty}e^{\mu t}(x^1(t)-x^2(t))=0
\]
so that $x^1(t)-x^2(t)=o(e^{-\mu t})$, contradiction with the fact that $C^{1,2}(t)$ does not tend to 0. Here as well the value (\ref{eq:tangentSpaceHessian}) is then strictly positive in the limit, and we have proved the lemma.
\end{proof}

\begin{rem}
    One could see this result as a Morse-theoretical version of Siefring's description of difference of pseuholomorphic half cylinders with same asymptotics (see \cite{sie08}). We do not however need the full description he provides for our goal.
\end{rem}

A consequence of the proof is that in fact $v$ converges.
\begin{lem}
$v(\infty)$ is a point.
\end{lem}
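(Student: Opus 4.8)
The plan is to show that the limit set $v(\infty)$ is a single point by arguing that along any sequence $t_k\to+\infty$ the normalized difference vectors $v(t_k)$ must converge to the same limit, and that this limit is forced by the eigenvalue structure of $\mathcal{H}h(x)$. I would reuse the asymptotic analysis already carried out in Lemma \ref{lemma:vInfStable}: working in a Morse chart centred at $x$ with Euclidean metric, write $x^i(t)-x = \sum_j c^i_j(t) v_j$ in an eigenbasis $(v_1,\dots,v_{2n})$ of $\mathcal{H}h(x)$, where each coordinate decays like $c^i_j(t)\sim a^i_j e^{-\lambda_j t}$ along the gradient flow (with corrections from the nonlinearity that are controlled by a Hartman--Grobman / stable manifold normal form). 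The leading behaviour of $x^i(t)-x$ is governed by the \emph{smallest} positive eigenvalue $\mu_{x^i}$ for which the corresponding coordinate is nonzero, and projecting onto that eigenspace gives a well-defined limiting direction for each of $x^1(t)-x$ and $x^2(t)-x$.

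First I would treat the case $\mu_{x^1}\neq\mu_{x^2}$, say $\mu_{x^1}<\mu_{x^2}$: then $\norm{x^2(t)-x} = o(\norm{x^1(t)-x})$, so $x^1(t)-x^2(t)$ is asymptotic to $x^1(t)-x$, and hence $v(t)$ converges to the limiting direction of $\frac{x^1(t)-x}{\norm{x^1(t)-x}}$, which exists by the eigenvalue-leading-term analysis. (If one of $x^1,x^2$ is the critical point $x$ itself, only one term is present and the statement is immediate.) Then I would handle the harder case $\mu_{x^1}=\mu_{x^2}=:\mu$. Here both vectors decay at the same exponential rate $e^{-\mu t}$, so $e^{\mu t}(x^i(t)-x)$ converges to a nonzero vector $w^i$ lying in the $\mu$-eigenspace (more precisely in the corresponding stable subspace of the linearized flow), by the stable manifold theorem's asymptotic refinement. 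By the final estimate in the proof of Lemma \ref{lemma:vInfStable}, $x^1(t)-x^2(t)$ does \emph{not} decay faster than $e^{-\mu t}$ (that was exactly the contradiction derived there), so $e^{\mu t}(x^1(t)-x^2(t))\to w^1-w^2\neq 0$, and therefore
\begin{equation*}
    v(t) = \frac{x^1(t)-x^2(t)}{\norm{x^1(t)-x^2(t)}} = \frac{e^{\mu t}(x^1(t)-x^2(t))}{\norm{e^{\mu t}(x^1(t)-x^2(t))}} \longrightarrow \frac{w^1-w^2}{\norm{w^1-w^2}},
\end{equation*}
a single point.

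The main obstacle is justifying the clean exponential asymptotics $e^{\mu_{x^i} t}(x^i(t)-x)\to w^i$ for a genuine nonlinear gradient flow rather than its linearization: one needs that trajectories in the stable manifold approach $x$ tangentially to a definite eigendirection at a definite rate, which is a standard but nontrivial consequence of the (un)stable manifold theorem together with the fact that $-\nabla h$ near a Morse critical point is, after a smooth change of coordinates, conjugate to its linear part (or at least admits invariant strong-stable foliations controlling the rates). Once this normal-form input is in hand — and it is essentially already invoked in Lemma \ref{lemma:vInfStable}, where the flow is said to be ``conjugated by a diffeomorphism to a radial vector field'' — the convergence of $v(t)$ follows by the elementary dichotomy above. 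I would therefore present the argument as a direct corollary of the estimates established in the previous proof, emphasizing that the only new point is extracting an actual limit rather than merely a positive limit inferior for the Hessian quantity \eqref{eq:tangentSpaceHessian}.
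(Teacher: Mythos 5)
Your proposal is correct and follows essentially the same route as the paper: both arguments are corollaries of the asymptotic estimates in Lemma~\ref{lemma:vInfStable}, and both hinge on the fact that in a Morse chart the flow is conjugate to its linearization, so that the normalized displacements $\frac{x^i(t)-x}{\norm{x^i(t)-x}}$ converge to definite eigendirections and the coefficients $\alpha_i,\beta_i,C^1,C^2,C^{1,2}$ in the asymptotic expansion all admit limits. Your explicit case split on $\mu_{x^1}$ versus $\mu_{x^2}$ and your isolation of the $e^{\mu t}(x^i(t)-x)\to w^i$ normal-form input make the same argument more transparent, but the substance is the one the paper uses.
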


\begin{proof}
With the notations above,\[
\frac{x^1(t)-x^2(t)}{\norm{x^1(t)-x^2(t)}}\sim\frac{1}{C^{1,2}(t)}\left(
\sum_i\alpha_i(t)C^1(x, t)v_i-\sum_i\beta_i(t)e^{-(\mu_{x^2}-\mu_{x^1})t}C^2(x, t)(t)v_i
\right).
\]
The $\alpha_i$ and the $\beta_i$ clearly admit limits as the quantities $\frac{x^1(t)-x}{\norm{x^1(t)-x}},\,\,\,\frac{x^2(t)-x}{\norm{x^2(t)-x}}$ tend to a unit vector, since they do so in a Morse chart. For the same reason, $C^1(x, t)$, $C^2(x, t)$ converge to some positive value. For $C^{1,2}(t)$, just remark that the same is true for $\delta(t)$.
\end{proof}

We shall now see how the homothety law (\ref{eq:homothety}) lets us compute $L$ in terms of the eigenvalues of the hessian matrix. We need the following obvious corollary of (\ref{eq:homothety}):

\begin{cor}\label{cor:dynamicsL}
Let $v_i$, $v_j$ be two nonzero eigenvectors of $\mathcal{H}h(x)$, of eigenvalues respectively $\mu_i$ and $\mu_j$. Then if $L(v_j)<L(v_i)$ (provided they are both defined) we have $\mu_i<\mu_j$.
\end{cor}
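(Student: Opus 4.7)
My plan is to combine the explicit linearization of the flow $\psi^t$ at the critical point $x$ with the homothety law from Proposition \ref{prp:homLaw}.

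The first step would be to describe $d_x\psi^t$ explicitly. Since $\psi^t$ is the flow of $\xi = -\nabla_g h$ and $x$ is a critical point, $d_x\psi^t$ is the exponential of the linearization of $\xi$ at $x$, which in the Euclidean metric is just $-\mathcal{H}h(x)$. So if $v$ is an eigenvector of the Hessian with eigenvalue $\mu$, then $d_x\psi^t v = e^{-\mu t}\,v$. This computation is the bridge between Hessian eigenvalues and dynamics under the linearized flow.

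Next, I would observe that $L$ depends only on the signs of the coordinates (and, by the extension to $W$, is continuous where defined), so it is invariant under nonzero scalar multiplication wherever it is defined. Consequently, along $t \mapsto e^{-\mu_i t}v_i$ the value of $L$ is constant and equal to $L(v_i)$; this tells us $v_i \in E_{L(v_i)}(x)$, and analogously $v_j \in E_{L(v_j)}(x)$. This is the step that lets me feed $v_i$ and $v_j$ into the dominated-splitting framework.

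Then I would apply Proposition \ref{prp:homLaw} to the unit vectors $\tilde v_j := v_j/\norm{v_j}$ and $\tilde v_i := v_i/\norm{v_i}$, playing the roles of $u_j$ and $u_k$ respectively; the hypothesis $L(v_j) < L(v_i)$ is exactly the condition $j<k$ needed in the proposition. This yields constants $C>0$ and $\lambda \in (0,1)$ with
\[
\frac{\norm{d_x\psi^t \tilde v_j}}{\norm{d_x\psi^t \tilde v_i}} \leq C\lambda^t \qquad \forall\, t>0.
\]
By the eigenvector computation the left-hand side equals $e^{(\mu_i-\mu_j)t}$. Taking logarithms, dividing by $t$ and sending $t\to+\infty$ forces $\mu_i - \mu_j \leq \log\lambda < 0$, which is the claim.

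I do not expect a real obstacle here: the corollary is essentially a direct translation of the homothety law through the elementary dynamics of a linear contraction. The only thing to watch is bookkeeping of signs, since Le Calvez in \cite{leCalvez1999} uses the gradient flow whereas here $\psi^t$ is the flow of $-\nabla_g h$ (as emphasized in the Remark immediately following Proposition \ref{prp:homLaw}); with this convention the negative gradient flow contracts positive-eigenvalue directions, and the inequality lands on the correct side.
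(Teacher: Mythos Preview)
Your proposal is correct and follows essentially the same route as the paper: compute that Hessian eigenvectors evolve under $d_x\psi^t$ by the scalar $e^{-\mu t}$, feed the resulting ratio into the homothety law \eqref{eq:homothety}, and read off the sign of $\mu_j-\mu_i$. The paper phrases the first step via a Morse chart where the flow itself is linear, whereas you work directly with the linearisation at the critical point and add the explicit check that scale-invariance of $L$ places each eigenvector in the corresponding $E_{L(v)}(x)$; these are the same argument in slightly different clothing.
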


\begin{proof}
Without loss of generality we assume we are in a Morse chart so that, in the coordinates given by Morse lemma,
\[
\psi^t(x_1, \cdots, x_{2n})=(e^{-\mu_1 t}x_1, \cdots, e^{-\mu_{2n}t}x_{2n}).
\]
Since the flow in the chart is linear, (\ref{eq:homothety}) gives
\[
\frac{\norm{v_j^t}}{\norm{v_i^t}}=e^{-(\mu_j-\mu_i)t}\frac{\norm{v_j}}{\norm{v_i}}< C\lambda^t\to 0
\]
so that necessarily $\mu_j-\mu_i>0$. 
\end{proof}
\begin{rem}
    We have already allowed for changes in the Riemannian metric on $E$ around the critical points: we need to highlight this as it will be used several times in what follows.
\end{rem}
We want now to prove the following Lemma, which is going to later imply our main result:
\begin{lem}\label{lem:filtrationDiag}
Let $x, y, z$ be three critical points of $h$, such that there is a negative gradient line connecting $x$ to $z$, and one connecting $y$ to $z$. Then if $n$ is odd
\[
L(x- y)\leq \floor*{\frac{n}{2}}-\floor*{\frac{\mathrm{Ind}_hz}{2}},
\]
and if $n$ is even
\[
L(x - y)\leq \floor*{\frac{n}{2}}-\ceil*{\frac{\mathrm{Ind}_hz}{2}}
\]
where $\text{Ind}_hz$ is the Morse index of $h$ at $z$.
\end{lem}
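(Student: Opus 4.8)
The strategy is to compare two different dynamical mechanisms acting on the difference vector $v(t)=\frac{x^1(t)-x^2(t)}{\norm{x^1(t)-x^2(t)}}$ as $t\to+\infty$, for gradient lines $x^1(t),x^2(t)$ asymptotic to $z$: on one hand the Hessian-governed decay (Corollary \ref{cor:dynamicsL}), on the other the homothety law (Proposition \ref{prp:homLaw}). By Theorem \ref{thm:LeCalLyapunov} the function $t\mapsto L(x^1(t)-x^2(t))$ is non-decreasing, so $L(x-y)=L(x^1(t)-x^2(t))$ for $t$ very negative is bounded above by $\lim_{t\to+\infty}L(x^1(t)-x^2(t))$, which by continuity of $L$ on $W$ equals $L(v(\infty))$. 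So it suffices to bound $L$ on the limiting direction $v(\infty)$.

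First I would invoke Lemma \ref{lemma:vInfStable}: $v(\infty)\in T_zW^s(z)$, i.e. $v(\infty)$ lies in the span of the eigenvectors of $\mathcal{H}h(z)$ with positive eigenvalue, a subspace of dimension $\mathrm{Ind}_hz$. The key observation is then the following: order the eigenvalues $\mu_1\leq\mu_2\leq\cdots\leq\mu_{2n}$ of $\mathcal{H}h(z)$ and the corresponding eigenvectors; by Corollary \ref{cor:dynamicsL}, $L$ is \emph{monotone} along this ordering in the sense that eigenvectors with larger $L$-value have smaller eigenvalue. More precisely, the decomposition $T_zE=\bigoplus_j E_j(z)$ from \eqref{eq:domDecomp} refines the eigenspace structure so that the eigenvalues associated to $E_j(z)$ are all larger than those associated to $E_{j'}(z)$ whenever $j<j'$. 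The filtration $E^+_j(z)=\bigoplus_{k\geq j}E_k(z)$ therefore consists precisely of the eigenvectors with the \emph{smallest} eigenvalues. Since $v(\infty)$ must have positive Hessian eigenvalue(s), it can only involve eigenvectors lying above a certain threshold in the ordering; equivalently, $v(\infty)\in E^+_{j_0}(z)$ forces $j_0$ to be large. Counting dimensions: the subspace $\bigoplus_{k\geq j}E_k(z)$ has dimension $2(\floor{n/2}-j+1)$ in the generic (odd $n$, or even $n$ with $j$ not extremal) case, decreasing by steps of $2$ as $j$ increases. For $v(\infty)$ to be forced into the positive-eigenvalue subspace (dimension $\mathrm{Ind}_hz$), one needs $L(v(\infty))=j$ with $\dim E^+_{j+1}(z)$ small enough relative to $\mathrm{Ind}_hz$; running the arithmetic gives exactly $L(v(\infty))\leq\floor{n/2}-\floor{\mathrm{Ind}_hz/2}$ when $n$ is odd and $L(v(\infty))\leq\floor{n/2}-\ceil{\mathrm{Ind}_hz/2}$ when $n$ is even, the discrepancy coming from the fact that the extremal spaces $E_{\pm\floor{n/2}}$ are $1$-dimensional when $n$ is even.

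To make the counting rigorous I would argue as follows. Let $j=L(v(\infty))$. Then $v(\infty)\in E^+_j(z)\setminus E^+_{j+1}(z)$ (strictly, $v(\infty)$ has a nonzero $E_j(z)$-component, since $L$ of a vector equals the top index in its decomposition whose component is nonzero — this is essentially the content of the homothety law, as the slowest-decaying component dominates). Now $v(\infty)$ also lies in the positive eigenspace of $\mathcal{H}h(z)$, which by Corollary \ref{cor:dynamicsL} is contained in $\bigoplus_{k\geq j_+}E_k(z)$ for the smallest $j_+$ such that this containment can hold — and the dimension count $\dim\bigoplus_{k\geq j_+}E_k(z)\geq\mathrm{Ind}_hz$ together with the dimension formula for the $E_k$ pins down $j_+$. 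Since $j\geq j_+$ is impossible to push further (that would contradict $v(\infty)\notin E^+_{j+1}(z)$ combined with the eigenvalue ordering), we get $j\leq$ the stated bound. The main obstacle I anticipate is precisely this last point: carefully establishing that the positive-eigenvalue subspace of $\mathcal{H}h(z)$ is \emph{exactly} an initial segment $\bigoplus_{k\geq j_+}E_k(z)$ of the dominated splitting (rather than merely being \emph{contained in} some such segment by the one-directional implication of Corollary \ref{cor:dynamicsL}), and handling the parity-dependent dimension bookkeeping for the extremal $1$-dimensional spaces when $n$ is even. One also must be slightly careful that $v(\infty)\in W$ (so that $L$ is continuous there) — this follows because $v(\infty)$ is a genuine limit direction of differences of gradient lines and Theorem \ref{thm:LeCalLyapunov} guarantees $L$ stays defined and finite in the limit.
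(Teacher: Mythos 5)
Your overall strategy coincides with the paper's: bound $L(x-y)$ from above by $\lim_{t\to+\infty}L(v(t))$ via Theorem~\ref{thm:LeCalLyapunov}, locate $v(\infty)$ in $T_zW^s(z)$ via Lemma~\ref{lemma:vInfStable}, and then estimate $L$ on $T_zW^s(z)$ by a dimension count in the dominated splitting, using Corollary~\ref{cor:dynamicsL} to order eigenvectors by $L$-value compatibly with the eigenvalue ordering. This is exactly what the paper does. However, two points in your write-up need fixing.

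First, you have a consistent direction confusion. The stable subspace $T_zW^s(z)$ for the $-\nabla h$ flow is the \emph{positive}-eigenvalue subspace of $\mathcal{H}h(z)$, which has dimension $2n-\mathrm{Ind}_hz$ (not $\mathrm{Ind}_hz$). By Corollary~\ref{cor:dynamicsL} larger eigenvalues go with \emph{smaller} $L$, so the $E_k(z)$ with small $k$ carry the positive eigenvalues; hence $T_zW^s(z)$ is to be bounded inside a space of the form $E^-_{j_{\max}}(z)=\bigoplus_{k\leq j_{\max}}E_k(z)$, not $E^+_{j_+}(z)=\bigoplus_{k\geq j_+}E_k(z)$ as you write. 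Your phrase ``$v(\infty)\in E^+_{j_0}(z)$ forces $j_0$ to be large'' is backwards: the relevant containment is $v(\infty)\in E^-_{j_{\max}}(z)$ with $j_{\max}$ as small as the count allows. The dimension arithmetic you invoke is for the complementary (negative-eigenvalue, $\mathrm{Ind}_hz$-dimensional) subspace, which fills the $E_k$ with the largest $k$, and the bound $j_{\max}\leq\floor*{n/2}-\floor*{\mathrm{Ind}_hz/2}$ (odd $n$) then falls out as you say.

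Second, the ``main obstacle'' you flag is not actually an obstacle, and worrying about it risks chasing something false. You do \emph{not} need the positive-eigenvalue subspace to be \emph{exactly} a sub-sum $\bigoplus_{k\leq j_{\max}}E_k(z)$, and in general it is not; a single $E_k(z)$ may contain eigenvectors of both signs. What suffices, and what the one-directional implication of Corollary~\ref{cor:dynamicsL} does give, is that no positive eigenvector can have strictly larger $L$-value than a negative eigenvector: if $L(w)<L(v)$ then $\mu_v<\mu_w$, so $\mu_w<0<\mu_v$ and $L(w)<L(v)$ cannot coexist. Therefore every positive eigenvector lies in some $E_k$ with $k\leq j_{\max}$, hence $T_zW^s(z)\subseteq E^-_{j_{\max}}(z)$ because the latter is a subspace; containment, not equality, is all the argument requires. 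With these corrections your route is essentially identical to the paper's proof.
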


\begin{proof}
    We can estimate the maximum possible  $L$ for an eigenvector in $T_xW^s(x)$ (remember that we are looking at the negative gradient flow), using the dimension of $E_j(x)$. Let us remark first that function $L$ is defined on every eigenspace: in fact the $E_j(z)$ are stabilised by the linearised flow and are therefore mapped into themselves by the Hessian. Now, the restriction to a subspace of a diagonalisable matrix is still diagonalisable: this fact together with (\ref{eq:domDecomp}) shows that there is a basis for $T_zE$ in eigenvectors of the Hessian $\mathcal{H}h(z)$, $v_1, \dots, v_{2n}$, such that $L(v_{i-1})\geq L_{i}$ for all $i$. Using the homothety law (\ref{eq:homothety}) we also know that they are ordered by eigenvalue: if $i<j$ then $\mu_i\leq\mu_j$. In particular, if $v_i$ has eigenvalue $\lambda_i$ we have the relations
\[
\lambda_0\leq \lambda_1<\dots<\lambda_{2i}\leq \lambda_{2i+1}<\dots\leq\lambda_{2n-1}
\]in the case where $n$ is odd, and
\[
\lambda_{0}<\lambda_1\leq \lambda_{2}\leq \dots < \lambda_{2i-1}\leq\lambda_{2i}<\dots < \lambda_{2n-1}
\]if $n$ is even. We can fill the bases of the different $E_j$, starting from the highest eigenvalues and from the lowest possible $L$, $-\floor*{\frac{n}{2}}$. We find via elementary calculations that if a vector $v$ is in $T_zW^s(z)$, then it is in $E_j^-(z)$ for $j= \floor*{\frac{n}{2}}-\floor*{\frac{\mathrm{Ind}_hx}{2}}$ whenever $n$ is odd (so that every $E_i$ has dimension 2), or for $j= \floor*{\frac{n}{2}}-\ceil*{\frac{\mathrm{Ind}_hx}{2}}$ if $n$ is even. 

Assume now $n$ is odd (the proof for even $n$ being identical). By Theorem \ref{thm:LeCalLyapunov}, we have the following inequalities, for $t\to+\infty$:
\begin{align*}
    \floor*{\frac{n}{2}}-\floor*{\frac{\mathrm{Ind}_hx}{2}}\geq L\left(\frac{x^1(t)-x^2(t)}{\norm{x^1(t)-x^2(t)}} \right)\geq\\\geq L\left(\frac{\psi^{-t}(x^1)-\psi^{-t}(x^2)}{\norm{\psi^{-t}(x^1)-\psi^{-t}(x^2)}} \right)\geq L(x- y)
\end{align*}
where similarly to the proof of Lemma \ref{lem:filtrationDiag} \begin{equation*}
    \lim_{t\to-\infty}x^1(t)=x,\,\,\lim_{t\to-\infty}x^2(t)=y,\,\, \lim_{t\to+\infty}x^1(t)=\lim_{t\to+\infty}x^1(t)=z.
\end{equation*}
\end{proof}
\begin{rem}
    In the proof above we have also allowed for changes in the metric around the critical points. These changes do not affect the validity of Le Calvez's Theorem \ref{thm:LeCalLyapunov}, and the inequalities above still hold.
\end{rem}

From Section \ref{subsec:genFuns} we know that $CM(h, g; \Z)$ is well defined for at least one choice of Riemannian metric $g$ on $\R^{2n}$.

We now define the function\[
I: CM_\bullet (h, g; \Z)\otimes CM_\bullet (h, g; \Z)\rightarrow \Z
\]
by
\begin{equation}\label{eq:defnFiltrMorseIndex}
I(x\otimes y)=
\begin{cases}
L(x - y) & x\neq y\\
\floor*{\frac{n}{2}}-\floor*{\frac{\mathrm{Ind}_hx}{2}} & x=y, \,\,\, n\text{ odd}\\
\floor*{\frac{n}{2}}-\ceil*{\frac{\mathrm{Ind}_hx}{2}} & x=y, \,\,\, n\text{ even}
\end{cases}
\end{equation}
on the generators and extend the usual way by
\begin{equation}\label{eq:IDefnChains}
    I\left(\sum_{i, j}\lambda_{i, j}x^i\otimes x^j\right):=\min_{i, j \vert \lambda_{i, j}\neq 0}I(x^i\otimes x^j).
\end{equation}

\begin{prp}
    The function $I$ as defined in (\ref{eq:defnFiltrMorseIndex}) induces a filtration on $CM_\bullet (h, g; \Z)\otimes CM_\bullet (h, g; \Z)$ for a choice of Riemannian metric $g$.
\end{prp}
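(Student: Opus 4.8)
The plan is to verify that the tensor differential never lowers $I$, reducing this to a statement about single Morse flow lines. First I would note that it is enough to check the inequality on generators: $I$ of a chain is the minimum of $I$ over the monomials occurring with nonzero coefficient, this minimum cannot go down under a linear combination (monomials can only cancel, never appear from nowhere), the tensor differential of $x\otimes y$ is a combination of monomials $x'\otimes y$ with $x'\in\partial x$ and $x\otimes y'$ with $y'\in\partial y$, and $I$ is symmetric because $L(-v)=L(v)$. So the whole statement follows once we show that $I(x'\otimes y)\ge I(x\otimes y)$ whenever $x,y$ are critical points of $h$ and $\gamma$ is a negative gradient line of $\xi$ with $\gamma(-\infty)=x$, $\gamma(+\infty)=x'$ (hence $\mathrm{Ind}_h x'=\mathrm{Ind}_h x-1$, in particular $x'\neq x$). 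I would then split into three cases according to the coincidences among $x,x',y$.

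If $x,x',y$ are pairwise distinct, both $I(x\otimes y)=L(x-y)$ and $I(x'\otimes y)=L(x'-y)$, and the inequality is a direct application of Le Calvez's Theorem~\ref{thm:LeCalLyapunov} to the pair made of $\gamma$ and the constant orbit at $y$: since $L$ is invariant under positive rescaling, $t\mapsto L(\gamma(t)-y)$ is defined and locally constant off a finite (hence bounded) set of times, where it jumps strictly upward; differences of distinct critical points lie in the open set $W$ on which $L$ is continuous, so $\gamma(t)-y$ stays in $W$ for $t$ near $\pm\infty$ and the function equals $L(x-y)$ near $-\infty$ and $L(x'-y)$ near $+\infty$. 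If instead $x'=y$, then $I(x\otimes y)=L(x-y)$ while $I(x'\otimes y)=I(y\otimes y)$ is the diagonal value in (\ref{eq:defnFiltrMorseIndex}), and the needed bound $L(x-y)\le I(y\otimes y)$ is exactly Lemma~\ref{lem:filtrationDiag} applied with $z:=y$, using $\gamma\colon x\to y$ and the constant orbit at $y$ as the two gradient lines converging to $z$.

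The remaining, and genuinely new, case is $x=y$: the monomials of $\partial(x\otimes x)$ are the $x'\otimes x$ with $x'\in\partial x$, so I must show $L(x'-x)\ge I(x\otimes x)$. This is the ``dual'' of Lemma~\ref{lem:filtrationDiag}, obtained by running its proof along the reversed flow. The line $\gamma$ lies in the unstable manifold $W^u(x)$; repeating the arguments of Lemma~\ref{lemma:vInfStable} and of the lemma showing $v(\infty)$ is a point, now with time reversed, shows that $v(t):=(\gamma(t)-x)/\norm{\gamma(t)-x}$ converges as $t\to-\infty$ to a unit vector $v_\infty\in T_xW^u(x)$. Now $T_xW^u(x)$ is the span of the eigenvectors of $\mathcal{H}h(x)$ with the $\mathrm{Ind}_h x$ smallest eigenvalues, which by Corollary~\ref{cor:dynamicsL} are precisely those carrying the largest values of $L$; filling the spaces $E_j(x)$ of the dominated splitting (\ref{eq:domDecomp}) from the top — the mirror of the computation in the proof of Lemma~\ref{lem:filtrationDiag} — gives $T_xW^u(x)\subseteq E^+_j(x)$ with $j=\floor*{\frac{n}{2}}-\floor*{\frac{\mathrm{Ind}_h x}{2}}$ for $n$ odd (and the analogous $\ceil*{\,\cdot\,}$ expression for $n$ even), i.e.\ $j=I(x\otimes x)$. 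By the definition of $E^+_j(x)$ in (\ref{eq:E+-}) one has $L(d_x\psi^t v_\infty)\ge j$ for all $t$, in particular $L(v_\infty)$ is defined and $\ge j$; and since $L(v(t))=L(\gamma(t)-x)$ is non-decreasing in $t$ by Theorem~\ref{thm:LeCalLyapunov}, tends to $L(v_\infty)$ at $-\infty$ and to $L(x'-x)$ at $+\infty$, we get $L(x'-x)\ge L(v_\infty)\ge j=I(x\otimes x)$.

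The main obstacle is this last case: the text only records the stable-manifold estimate of Lemma~\ref{lem:filtrationDiag}, whereas the diagonal term of $I$ must simultaneously dominate the linking numbers produced by flow lines \emph{leaving} a critical point, so one genuinely has to redo the asymptotic analysis — convergence of the normalised difference, the eigenvalue-versus-linking ordering of Corollary~\ref{cor:dynamicsL}, and the dimension count for the $E_j(x)$ — along the reversed flow. Everything else is bookkeeping, as long as one keeps each difference vector inside the open set $W$, so that the continuous extension of $L$ and the Lyapunov monotonicity of Theorem~\ref{thm:LeCalLyapunov} can be invoked at the limits $t\to\pm\infty$.
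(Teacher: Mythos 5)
Your proof is correct and follows essentially the same route as the paper: reduce to generators, invoke Theorem~\ref{thm:LeCalLyapunov} in the generic case and Lemma~\ref{lem:filtrationDiag} when the endpoint coincides with $y$, and then handle the diagonal case $x=y$ by redoing the asymptotic and eigenvalue analysis at the \emph{starting} critical point (the paper phrases this as ``switching to $-h$'', which is precisely your reversal of the flow and replacement of $W^s$ by $W^u$). Your version just spells out more of the bookkeeping — the symmetry $L(-v)=L(v)$, the dimension count showing $T_xW^u(x)\subseteq E^+_j(x)$ with $j=I(x\otimes x)$, and the limit at $t\to-\infty$ — that the paper leaves implicit.
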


\begin{proof}
    The differential in the tensor product is defined to be $\partial\otimes \mathrm{Id}+\varepsilon \mathrm{Id}\otimes \partial$, where $\partial$ is the Morse differential on $CM_\bullet (h, g; \Z)$ and $\varepsilon$ is a sign (standard definition of the product differential). We may apply the proofs above to the special cases in which either $x$ or $y$ (critical points which negative gradient lines of $h$ flow away from) is equal to $z$: this shows that $v(\infty)$ also in this case belongs in $T_zW^s(z)$. Switching to $-h$ one can find opposite inequalities in the case in which $x=y$, and a flow line connects $y$ to $z$. The result is $I(z, z)\geq I(x, y)$ in the former case, and $I(x,y)\leq I(x, z)$ in the latter. This proves that $I$ gives a filtration on $CM_\bullet (h, g; \Z)\otimes CM_\bullet (h, g; \Z)$.
\end{proof}
\vspace{0.2 cm}

The expression (\ref{eq:defnFiltrMorseIndex}) is rather awkward, for essentially two reasons. First, even though the actual number on the diagonal may not depend on the generating function, the description does. Second, it is not apparent what kind of topological information $I(x\otimes x)$ bears. Both points are in stark contrast with the situation outside of the diagonal of the tensor product: for $x\neq y$, $L(x-y)$ is a half of the linking number of the orbits associated to $x$ and $y$, a piece of totally intrinsic information. We aim now to decode the meaning of $I(x\otimes x)$.
\vspace{0.2 cm}

Fix $\varphi\in\Ham_c(\R^2)$ non degenerate, and $h$ as above. By Viterbo's work \cite{vit87} we know that differences of Morse indices of critical points of $h$ coincide with differences of the Conley-Zehnder indices of the associated orbits of $\varphi$. If $x\in\mathrm{Crit}(h)$, we denote by the same letter the associated fixed points of $\varphi$ associated to $x$. We may therefore normalise the Conley-Zehnder index the following way:
\begin{equation}\label{eq:CZNormalisation}
    \mathrm{Ind}_h(x)-\sigma(h)=CZ(x)+1.
\end{equation}

Recalling that $\sigma(h)=n-1$, we plug this equality and (\ref{eq:CZNormalisation}) in (\ref{eq:defnFiltrMorseIndex}). What we obtain is the more natural definition of $I$:
\begin{equation}\label{eq:defnFiltrCZ}
I(x\otimes y)=
\begin{cases}
\frac{1}{2}\mathrm{lk}(x, y) & x\neq y\\
-\ceil*{\frac{\mathrm{CZ}(x)}{2}} & x=y
\end{cases}.
\end{equation}

We are going to justify the normalisation (\ref{eq:CZNormalisation}) in the appendix, and prove that it coincides with one of the usual ones.

\subsection{Extension to GFQIs}
Fix $\varphi\in \Ham_c(\R^2)$: Laudenbach-Sikorav's Theorem \cite{sik87} provides a generating function $S: \R^2\times \R^{N}\rightarrow \R$ which is quadratic at infinity. Such a function is obtained by cutting any Hamiltonian isotopy between the identity and $\varphi$ into $\mathscr C^1$-small components, say $n$ of them. The proof of Sikorav's theorem as developed by Brunella \cite{bru91} shows that in such a case there exists a generating function defined on $\R^{4n+2}$. We are now going to prove the central theorem, let us restate it:
\begin{thm}\label{thm:filtrationAnyGFQI}
    Let $S: \R^2\times \R^k\rightarrow \R $ be a GFQI representing a non degenerate compactly supported Hamiltonian diffeomorphism $\varphi\in \Ham_c(\R^2)$. Then there exists a non degenerate quadratic form on $\R^l$ and a metric $g$ on $\R^{2+k+l}$ such that $(S\oplus Q, g)$ is a Morse-Smale, Palais-Smale pair, and the function \begin{equation*}
        I:CM(S\oplus Q, g; \Z)\otimes CM(S\oplus Q, g; \Z)\rightarrow \Z
    \end{equation*}
    \begin{equation*}
I(x\otimes y)=
\begin{cases}
\frac{1}{2}\mathrm{lk}(x,y) & x\neq y\\
-\ceil*{\frac{\mathrm{CZ}(x)}{2}} & x=y
\end{cases}
\end{equation*}
increases along the tensor product differential.
\end{thm}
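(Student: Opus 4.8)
\emph{Strategy.} The plan is to bootstrap from the twist generating function, whose filtration has just been established, and propagate the statement to $S$ along Viterbo's uniqueness theorem. Given the GFQI $S:\R^2\times\R^k\to\R$ for $\varphi\in\Ham_c(\R^2)$, construct a twist generating function $h$ for the same $\varphi$ by decomposing a Hamiltonian isotopy from the identity to $\varphi$ into $\mathscr{C}^1$-small twist maps; by Corollary \ref{cor:hIsGFQI} this $h$ is a GFQI for $\varphi$, it is Morse since $\varphi$ is non-degenerate, and $(h,g_0)$ with $g_0$ the Euclidean metric (modified near the critical points so as to produce Morse charts, and perturbed slightly to be Morse--Smale) is Palais--Smale because $h$ is quadratic at infinity (Lemma \ref{lemma:filtrChangeMetric}). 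By the previous subsection, $CM(h,g_0)\otimes CM(h,g_0)$ carries $I$ as a filtration. Now $S$ and $h$ are both GFQIs for the Lagrangian $L_\varphi\subset T^*\Delta$ obtained from the graph of $\varphi$, and $L_\varphi$ is a Hamiltonian image of the zero section, hence has the GFQI-uniqueness property by Theorem \ref{thm:vitUniqueness}(ii) and (i). Therefore there is a common GFQI $S''$ reached from both $S$ and $h$ by finite sequences of shifts, gauge equivalences and stabilisations.

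\emph{Transport of structure.} The second ingredient is that each elementary operation induces an isomorphism of $\Z$-graded complexes carrying the function $I$ to the function $I$. With the grading shift by the signature of Section \ref{subsec:morseGFQI}, stabilisation becomes degree preserving (the Morse index and the signature both jump by the negative index of the stabilising form), and shifts and gauge equivalences are degree preserving for obvious reasons. Under any of the three operations the critical points correspond bijectively and map to the \emph{same} fixed point of $\varphi$: for a gauge equivalence because it preserves the base, so $\partial^H S$ at critical points is unchanged; for a stabilisation by a fibre-independent quadratic form (which we may always assume) because the new fibre directions are vertical and contribute nothing to $\partial^H$; for a shift trivially. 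Consequently the off-diagonal value $\tfrac12\mathrm{lk}(x,y)$ is unchanged, being intrinsic to $\varphi$ and the two orbits, and the diagonal value $-\ceil*{\mathrm{CZ}(x)/2}$ is unchanged because $\mathrm{CZ}$ is intrinsic to $\varphi$; moreover the normalisation $\mathrm{Ind}(x)-\sigma(\cdot)=\mathrm{CZ}(x)+1$ is preserved throughout, since $\mathrm{Ind}$ and $\sigma$ shift by the same amount under stabilisation and are unchanged by gauge equivalences and shifts. Finally, once the metric is transported (pushed forward by the gauge diffeomorphisms, taken as a product with a Euclidean factor at each stabilisation) the negative gradient flow lines are carried to negative gradient flow lines, so the differentials correspond. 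Hence the property ``$I$ increases along the tensor-product differential'' is preserved by every elementary operation together with the corresponding transport of the metric.

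\emph{Conclusion.} Transport $(h,g_0)$ forward along its chain to $S''$, obtaining a metric $g''$ for which $(S'',g'')$ is Morse--Smale and Palais--Smale (Palais--Smale is preserved because the gauge diffeomorphisms occurring here may be normalised to have bounded differential, cf.\ the remark after Lemma \ref{lemma:filtrChangeMetric}, and stabilising by a metric Euclidean at infinity keeps the quadratic-at-infinity behaviour) and such that $CM(S'',g'')\otimes CM(S'',g'')$ carries $I$ as a filtration, being isomorphic to $CM(h,g_0)^{\otimes 2}$ as a filtered graded complex. Now reorder the chain from $S$ to $S''$: a stabilisation preceded by a gauge equivalence equals a stabilisation followed by a gauge equivalence (since $(T\circ\psi)\oplus Q=(T\oplus Q)\circ(\psi\oplus\mathrm{id})$), stabilisations compose to the direct sum of forms, gauge equivalences compose, and shifts commute with both; so the whole sequence is a single stabilisation $S\rightsquigarrow S\oplus Q$ (let $l$ be the dimension of its fibre) followed by a gauge equivalence $\phi$ and a shift, i.e.\ $S''=(S\oplus Q)\circ\phi+c$. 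Pulling $g''$ back by $\phi$ yields a metric $g$ on $\R^{2+k+l}$ with $(S\oplus Q,g)$ Morse--Smale and Palais--Smale and filtered-complex-isomorphic to $(S'',g'')$, so $CM(S\oplus Q,g)\otimes CM(S\oplus Q,g)$ carries $I$ as an increasing filtration, which is the assertion. The main obstacle is precisely the bookkeeping in this last paragraph --- verifying that the elementary operations relating $S$ to $S''$ can be normalised and reordered so that all stabilisations come first, and that throughout this normalisation the diffeomorphisms and the quadratic forms at infinity remain controlled enough to preserve the Palais--Smale and Morse--Smale conditions; the genuinely new content, namely the positivity of intersections of Theorem \ref{thm:LeCalLyapunov}, has already been isolated in the treatment of $h$, and the present argument is only a transport of structure.
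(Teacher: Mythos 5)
Your proof is correct and takes essentially the same route as the paper: both apply Viterbo's uniqueness theorem to relate the given GFQI $S$ and a twist generating function $h$ for $\varphi$, and then transport the filtration $I$ along the resulting isomorphism of Morse complexes, observing that $I$ is determined by data intrinsic to $\varphi$ (linking numbers and Conley--Zehnder indices) and hence invariant under elementary operations. You are somewhat more explicit than the paper in deriving the single-stabilization normal form $(S\oplus Q)\circ\phi+c$ from the finite-sequences statement of Theorem \ref{thm:vitUniqueness}, a normalization the paper's proof quietly takes for granted when it directly asserts the existence of $Q_1,Q_2,\psi$ with $(h\oplus Q_1)\circ\psi=S\oplus Q_2$.
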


\begin{proof}
Fix $S$ as in the statement, and let $h$ be a Le Calvez generating function. Then, by Viterbo Uniqueness Theorem, there exist two non degenerate quadratic forms\begin{equation*}
    Q_i: \R^{k_i}\rightarrow \R,\,\,\, i=1, 2,\,\, k_i\in \N
\end{equation*}
and a gauge equivalence 
\begin{equation*}
    \psi: \R^{2n+k_1}\rightarrow \R^{2n+k_1}
\end{equation*}
such that
\begin{equation*}
    (h\oplus Q_1)\circ \psi= S\oplus Q_2.
\end{equation*}
We may now set $Q:=Q_2$ and $l:=k_2$. Let also $g$ be the Riemannian metric on $\R^{2n}$ for which we know that $I$ defines a filtration on $CM(h, g; \Z)\otimes CM(h, g; \Z)$. If $g'$ is the associated Riemannian metric defined on $\R^{k+k_1}$ we have the following isomorphisms of complexes
\begin{align*}
    CM(h, &g; \Z)\cong CM(h\oplus Q_1, g'; \Z)\cong \\\cong CM&((h\oplus Q_1)\circ \psi, \psi^*g'; \Z)=CM(S\oplus Q, \psi^*g'; \Z).
\end{align*}
The filtration $I$ is then pushed forward along these isomorphism of chain complexes, proving the statement of the Theorem.
\end{proof}

\begin{defn}
    We say that a GFQI $S\oplus Q$ with the property spelled in the Theorem (for some Riemannian metric) bears a linking filtration.
\end{defn}

\section{Continuation maps and the proof of Theorem \ref{thm:B}}
\subsection{Braids and continuation maps}
We now analyse the behaviour of the linking filtration under continuation maps, i.e. chain homotopies between Morse complexes induced by regular homotopies between Morse data. We are going to work only with continuation maps between twist generating functions. These continuation maps are well defined by virtue of the content of Section \ref{sec:leCGF}.

Let $(h_i, g_i)$ for $i=0, 1$ be two such Morse-Smale and Palais-Smale pairs defined on the same vector bundle $E$ over $\R^2$ (in particular, the two quadratic forms at infinity coincide). From Lemma \ref{lemma:filtrChangeMetric} we know that both\begin{equation*}
    CM(h_i, g_i; \Z)\otimes CM(h_i, g_i; \Z)
\end{equation*}
carry a filtration $I$ as defined above. We want to compare the filtration on the two sides of a continuation map between the two chain complexes. We consider continuation maps of the kind we described in Section \ref{sec:leCGF}, using the setup from Section \ref{subsec:morseGFQI}. We consider a convex homotopy $(H, G)$ between $(h_0, g_0)$ and $(h_1, g_1)$, and let $s\in (-\delta, 1+\delta)$ be the time of the homotopy. The homotopy $H$ contains a term which only depends on $s$ in its construction: we ignore it, since it does not affect the properties of $L$ as long as the Riemannian metric $G$ is a small perturbation of a product metric.

Because the twist condition read off the generating function is a convex condition on the derivatives, a homotopy as above has the property that any $(H(s, \cdot)$ is a function of the kind constructed by Le Calvez, so that $L$ has the Lyapunov property for every fixed $s$, and likewise for any fixed $s$ there exists a dominated splitting as above.
\begin{lem}\label{lem:monConMaps}
Define $\tilde{L}: (-\delta, 1+\delta)\times E\rightarrow \Z$ as \begin{equation*}
    (s, x_0, \dots, x_{2n-1})\mapsto L(x_0, \dots, x_{2n-1}).
\end{equation*}
If for some $\tilde{x}\in  (-\delta, 1+\delta)\times E$ the quantity $\tilde{L}(\tilde{x})$ is not defined, then there is a positive $\varepsilon$ such that for all $0<t<\norm{\varepsilon}$, if $\tilde{x}^t=\phi^t_{-\nabla_GH}\tilde{x}$, $\tilde{L}(\tilde{x}^t)$ is defined and \begin{equation*}
    \tilde{L}(\tilde{x}^{-t})<\tilde{L}(\tilde{x}^t)
\end{equation*}for all $0<t<\varepsilon$.
\end{lem}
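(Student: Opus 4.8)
The statement is a parametrised version of Le Calvez's Lyapunov property (Theorem~\ref{thm:LeCalLyapunov}), where the extra parameter $s\in(-\delta,1+\delta)$ is treated as a ``dummy'' direction on which $\tilde L$ does not depend. So the strategy is to reduce the claim to the already-known $s$-fixed statement plus a transversality observation about the extra coordinate $s$. First I would recall that, as remarked just before the Lemma, the convexity of the twist condition on derivatives guarantees that for each fixed $s$ the function $H(s,\cdot)$ is itself a twist generating function of Le Calvez type (the purely $s$-dependent term in $H$ being irrelevant to $L$, since $L$ only reads the $E$-coordinates). Hence Theorem~\ref{thm:LeCalLyapunov} applies verbatim to the flow of $-\nabla_{g_s}H(s,\cdot)$ on $E$ for each $s$.

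\textbf{Key steps.}
The heart of the argument is to compare the flow of $-\nabla_G H$ on $(-\delta,1+\delta)\times E$ with the ``frozen'' flows of $-\nabla_{g_s}H(s,\cdot)$ on $E$. Write $\tilde x^t=\phi^t_{-\nabla_G H}(\tilde x)$ with $\tilde x=(s_0,x)$, and let $s(t)$ denote its $(-\delta,1+\delta)$-component; since $G$ is a small perturbation of a product metric, $\dot s(t)=-\partial_s H(s(t),\cdot)/\|ds\|^2_G+(\text{small})$, which is bounded, so $s(t)=s_0+O(t)$. The point is that $\tilde L(\tilde x^t)=L(\pi_E(\tilde x^t))$, where $\pi_E$ is the projection to $E$, and $\pi_E(\tilde x^t)$ is a $\mathscr{C}^1$-small perturbation (uniformly on compact time intervals, by standard ODE dependence on parameters, using that $G$ is close to the product metric) of the frozen orbit $y^t:=\phi^t_{-\nabla_{g_{s_0}}H(s_0,\cdot)}(x)$. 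Now $\tilde L(\tilde x)$ being undefined means exactly that the loop $\gamma(x,\cdot)$ — which equals $\gamma(\pi_E(\tilde x),\cdot)$ — has a self-intersection at the basepoints, i.e. $\pi_E(\tilde x)\in W^c$ (the bad locus $W\setminus(\text{where }L\text{ is defined})$, in the notation of Section~\ref{sec:pointsEBraidsLeC} one of the $x_i$ vanishes with $x_{i-1}x_{i+1}<0$, or a stronger degeneracy). Applying Theorem~\ref{thm:LeCalLyapunov} to the frozen flow through $x$ gives a time-$t$ half-interval on which $L(y^t)$ is defined with $L(y^{-t})<L(y^t)$. Since $L$ is locally constant on its domain of definition, and $\pi_E(\tilde x^{\pm t})$ stays $\mathscr{C}^0$-close to $y^{\pm t}$ for small $t$, it lands in the same locally-constant region, so $\tilde L(\tilde x^{\pm t})=L(y^{\pm t})$ for $0<t<\varepsilon$ with $\varepsilon$ small enough; this gives $\tilde L(\tilde x^{-t})<\tilde L(\tilde x^t)$ as desired. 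One should also note the bad set $W^c\times(-\delta,1+\delta)$ is avoided by $\tilde x^t$ for $t$ in a punctured neighbourhood of $0$, which again follows from the frozen statement plus continuity.

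\textbf{Main obstacle.}
The delicate point is the interchange of two ``smallness'' regimes: Theorem~\ref{thm:LeCalLyapunov} produces, for the \emph{frozen} flow through the \emph{exact} point $x$, a threshold $\varepsilon_0=\varepsilon_0(x)$ below which $L$ behaves monotonically, but I need the genuine flow $\tilde x^t$ — whose $E$-projection is only \emph{close} to $y^t$ — to inherit that behaviour. Because $\tilde x\in W^c\times(-\delta,1+\delta)$ sits on the boundary of $L$'s domain, one must be careful that the perturbation $\pi_E(\tilde x^t)-y^t$ does not push the orbit back across the bad locus: this is where one uses that $L(y^{\pm t})$ is not merely defined but \emph{stably} so, i.e. $y^{\pm t}$ lies in the open set on which $L$ is locally constant, together with a uniform bound $\|\pi_E(\tilde x^t)-y^t\|=O(t^2)$ or at worst $O(t\cdot\|G-G_{\mathrm{prod}}\|)$ so that shrinking $\varepsilon$ (and, if necessary, further perturbing $G$ to be $\mathscr{C}^0$-closer to a product metric, which we are free to do) confines the orbit to the correct region. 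Once this uniform closeness estimate is in hand the rest is bookkeeping; I would state the closeness estimate as a short sublemma and then quote Theorem~\ref{thm:LeCalLyapunov} to finish.
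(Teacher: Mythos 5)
Your proposal is correct and follows essentially the same route as the paper: both reduce to the frozen-$s$ statement of Theorem~\ref{thm:LeCalLyapunov} and then transfer the conclusion to the genuine flow $\phi^t_{-\nabla_G H}$ by a continuity argument. The paper phrases the transfer step as openness of the transversality condition between the flow and the boundary of the region where $L$ is locally constant, while you phrase it as a quantitative estimate $\|\pi_E(\tilde x^t)-y^t\|=O(t^2)+O(t\cdot\|G-G_{\mathrm{prod}}\|)$ against the $O(t)$ distance of the frozen orbit from the bad locus — these are two ways of saying the same thing, and your explicit flagging of the dependence on $\|G-G_{\mathrm{prod}}\|$ is sound.
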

\begin{proof}
This Lemma is in fact a corollary of the proof of Theorem \ref{thm:LeCalLyapunov}. Assume $\tilde{L}(\tilde{x})$ is not defined: this means that if $\tilde{x}=(s, x)$, $L(x)$ is not defined either. Then by Theorem \ref{thm:LeCalLyapunov} there is an $\varepsilon>0$ such that $x^{-t}\in W_{j^-}$, $x^t\in W_{j^+}$ for all $0<t<\varepsilon$, $j^-<j^+$. Here $x^t$ is the flow of the vertical vector field $-\nabla_{g^s}h^s$ evaluated at $x$ at time $t$. Now, the $W_{j^{\pm}}$ are open in $E$: for small times then the vertical part of the flow will still be in $W_{j^-}$ in the negative direction, and in $W_{j^+}$ in the positive one.

More precisely, let $\chi^t: \R\times E\rightarrow \R$ be the first projection of the flow of $-\nabla_GH$, and $\eta^t: \R\times E\rightarrow E$ be the second one. Then by continuity of $\chi$ for any small $\delta_1$, for times $0\leq \abs{t}<\delta$, $\abs{\chi^t(s, x)-s}<\delta_1$; remark now that the transversality condition between the vector field and the boundary between $W_{j^-}$ and $W_{j^+}$ is open, so for small perturbations of the vector field $\nabla_{g^s}h^s$ it is still verified. By continuity for any $\varepsilon>0$ there is some $\delta>0$ such that for $s'\in (s-\delta, s+\delta)$l
\[
\norm{\text{pr}_2\nabla_GH(s', x)-\nabla_{g^s}h^s(x)}
<\varepsilon
\]
so by the lines above for an arbitrary small $\varepsilon$ the transversality condition is satisfied for times which are small enough (possibly smaller of course than the time in Theorem \ref{thm:LeCalLyapunov}), and we conclude.
\end{proof}

We now want to extend the result of the previous Lemma to say that $I$ increases along continuation maps. Let $(H, G)$ be a cobordism as above. The proof of Lemma \ref{lemma:vInfStable} is still valid in this context, since it is simply a dynamical result which does not depend on the form of the Morse function: if $x, y\in \text{Crit}(h_0)$, $z\in \text{Crit}(h_1)$ and there are gradient lines connecting $x$ and $y$ to $z$, with the notations above \[
v(\infty)\in T_zW^s(z)=\R\oplus E
\]
and the function $\tilde{L}$ really computes the $L$ of the vertical projection of $v(\infty)$. Applying exactly the same proof as above (it is necessary to use the dominated splitting of $E$ at $z$ for the Morse flow induced by $h_1$), we find the inequality:
\[
I(x\otimes y)\leq I(z\otimes z).
\]
We can extend this inequality to all GFQIs (up to stabilisation) pushing the filtration forward along elementary operations on both sides of the inequality.
We have thus proved the Proposition:
\begin{prp}
    Let $(S_i\oplus Q_i, g_i)$ be pairs for which the filtration $I$ is defined on\begin{equation*}
        CM(S_i\oplus Q_i, g_i; \Z)\otimes CM(S_i\oplus Q_i, g_i; \Z).
    \end{equation*}
    Assume $\Phi$ is a continuation map
    \begin{equation*}
        \Phi: CM(S_0\oplus Q_0, g_0; \Z)\rightarrow CM(S_1\oplus Q_1, g_1; \Z)
    \end{equation*}
    given by a linear cobordism $(H, G)$. Then 
    \begin{equation*}
        I(x\otimes y)\leq I(\Phi(x)\otimes \Phi(y)).
    \end{equation*}
\end{prp}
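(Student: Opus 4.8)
The plan is to reduce the statement to twist generating functions and then transcribe, for the cobordism flow, the arguments already used earlier for a single complex. First I would apply Viterbo's Uniqueness Theorem \ref{thm:vitUniqueness} exactly as in the proof of Theorem \ref{thm:filtrationAnyGFQI}: after finitely many elementary operations (stabilisations by base-independent non-degenerate quadratic forms and linear gauge equivalences) both $(S_i\oplus Q_i, g_i)$ become Le Calvez generating functions $h_i$ on one common bundle $E\to\R^2$, with equal signature and equal quadratic form at infinity; a linear cobordism between $(h_0,\hat g_0)$ and $(h_1,\hat g_1)$ then pushes forward to $(H,G)$, and both $\Phi=\Phi_{(H,G)}$ and the function $I$ are carried along these operations and the isomorphisms of complexes they induce. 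So it suffices to treat twist generating functions. Writing $\Phi(x)=\sum_z n_{x,z}\,z$, one has $(\Phi\otimes\Phi)(x\otimes y)=\sum_{z,w}n_{x,z}n_{y,w}\,z\otimes w$, so by the convention (\ref{eq:IDefnChains}) the claim is that $I(x\otimes y)\le I(z\otimes w)$ whenever there exist negative gradient lines of $(H,G)$ from $(0,x)$ to $(1,z)$ and from $(0,y)$ to $(1,w)$. Since continuation maps are degree-$0$ chain maps and we matched signatures, $\mathrm{CZ}(z)=\mathrm{CZ}(x)$ and $\mathrm{CZ}(w)=\mathrm{CZ}(y)$ whenever the coefficients do not vanish; in particular the case $x=y=z=w$ is an equality.

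For the off-diagonal case $z\neq w$ with $x\neq y$ I would argue directly with the Lyapunov function $\tilde L$. Let $X(\sigma),Y(\sigma)$ be the two flow lines of $-\nabla_G H$; since their positive limits are distinct, $\tilde L\big(X(\sigma)-Y(\sigma)\big)=L\big(\mathrm{pr}_E X(\sigma)-\mathrm{pr}_E Y(\sigma)\big)$ is defined for all $|\sigma|$ large, is defined off a finite set of times, and by Lemma \ref{lem:monConMaps} can only jump upward there — this is Le Calvez's positivity of intersections (Theorem \ref{thm:LeCalLyapunov}) applied slice-by-slice, every homotopy slice $H(s,\cdot)$ being a twist generating function by convexity of the twist condition, combined with the openness argument of Lemma \ref{lem:monConMaps} that controls the $s$-coordinate. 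Letting $\sigma\to-\infty$ the value tends to $L(x-y)$ and letting $\sigma\to+\infty$ it tends to $L(z-w)$; by Lemma \ref{lem:braidTypesCoincide} these are $\frac{1}{2}\mathrm{lk}(x,y)$ and $\frac{1}{2}\mathrm{lk}(z,w)$, so $I(x\otimes y)=L(x-y)\le L(z-w)=I(z\otimes w)$.

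The heart of the matter is the diagonal case $z=w$ with $x\neq y$, the cobordism analogue of Lemma \ref{lem:filtrationDiag}. Here the normalised difference $v(\sigma)$ of the two flow lines converges, as $\sigma\to+\infty$, to a vector $v(\infty)\in T_{(1,z)}W^s(1,z)$: the proof of Lemma \ref{lemma:vInfStable} is purely dynamical (Morse chart plus non-degeneracy of the rest point) and applies verbatim, and $\tilde L$ then computes $L$ of the $E$-component of $v(\infty)$, which is non-zero since $z=w$ forces the two $E$-trajectories to differ. Because $s=1$ is a minimum of the function $\rho$ built into the cobordism, the extra $\R\partial_s$ summand lies in the stable subspace at $(1,z)$ and is simply invisible to $\tilde L$; hence $L(\mathrm{pr}_E v(\infty))$ is bounded above by the maximal value of $L$ on $T_z W^s_{h_1}(z)$, which, by the bookkeeping of Lemma \ref{lem:filtrationDiag} applied to $h_1$ (using the dominated splitting (\ref{eq:domDecomp}) and the homothety law (\ref{eq:homothety})), is at most $-\ceil*{\mathrm{CZ}(z)/2}$. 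Combining with the upward-only jumps of $\tilde L$ along the pair gives $I(x\otimes y)=L(x-y)\le -\ceil*{\mathrm{CZ}(z)/2}=I(z\otimes z)$. The mixed subcase $x=y$, $z\neq w$ is symmetric: apply the reverse of this argument at the source $(0,x)$, where $\R\partial_s$ is now unstable but still invisible to $\tilde L$, precisely as ``switching to $-h$'' in the proof that $I$ is a filtration on a single complex.

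The main obstacle, and the step deserving care, is precisely this diagonal case: one must check that Le Calvez's dominated splitting and homothety law genuinely survive for the cobordism flow on $(-\delta,1+\delta)\times E$ — that is, that Theorem \ref{thm:LeCalLyapunov} applies to a pair of flow lines slice-by-slice (this is where convexity of the twist condition enters, making every slice admissible) and that Lemma \ref{lemma:vInfStable}'s convergence to the stable tangent space is not disturbed by the homotopy coordinate (which it is not, that coordinate being invisible to $\tilde L$ and the argument being local). Once these two points are settled, pushing the inequality forward from the $h_i$ to arbitrary $S_i\oplus Q_i$ along the elementary operations, exactly as in Theorem \ref{thm:filtrationAnyGFQI}, completes the proof.
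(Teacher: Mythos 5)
Your proposal follows essentially the same route as the paper: reduce to twist generating functions by Viterbo uniqueness, handle the off-diagonal case by the Lyapunov property of $\tilde L$ (Lemma \ref{lem:monConMaps}), handle the diagonal target case by observing that $v(\infty)$ lies in the stable tangent space at $(1,z)$ and invoking the dominated-splitting bound for $h_1$, treat the diagonal source case by reversing the flow, and finally push the inequality back to arbitrary $S_i\oplus Q_i$ along elementary operations. Your case analysis is somewhat more explicit than the paper's (which mainly records the $z=w$ case and appeals to ``the same proof as above'' for the rest), and your observation that the $\R\partial_s$ summand at $(1,z)$ is stable because $\rho$ has a minimum at $s=1$ and is invisible to $\tilde L$ makes precise a point the paper records only as $T_zW^s(z)=\R\oplus E$; but these are differences of exposition, not of substance.
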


We now report a Lemma about the relation between the $\mathscr{C}^0$-metric on generating functions and the Hofer metric. This Lemma is a variation of Theorem 1.2.B in \cite{bialPol94}, and the author could not find a proof in the literature. A proof may also be given via the theory of Hamilton-Jacobi equations.

\begin{lem}\label{lem:morseConHoferLeC}
    Let $\varepsilon >0$. Given $\varphi\in\Ham_c(\R^2)$ there exists a $\delta>0$ such that for every $\psi\in\Ham_c(\R^2)$ with $d_H(\varphi, \psi)<\delta$ there are twist generating functions $h$ and $k$ for $\varphi$ and $\psi$ respectively such that
    \begin{equation}
    \norm{h-k}_\infty\leq \varepsilon.
    \end{equation}
\end{lem}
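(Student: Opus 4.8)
The plan is to realize $\varphi$ and $\psi$ as the time-one maps of two Hamiltonian isotopies that share a long common initial segment and differ only by a $\mathscr{C}^0$-small tail, and then to read off $h$ and $k$ from a single subdivision of the same isotopy data. The only place where the Hofer hypothesis enters is the following normalisation remark. If $d_H(\varphi,\psi)<\delta$, then $\psi\circ\varphi^{-1}\in\Ham_c(\R^2)$ admits a \emph{compactly supported} Hamiltonian $F=(F_t)_{t\in[0,1]}$ with $\phi^1_F=\psi\circ\varphi^{-1}$ and $\int_0^1(\max F_t-\min F_t)\,dt<\delta$; since each $F_t$ vanishes outside a compact set, $\min F_t\le 0\le\max F_t$, hence $\|F_t\|_\infty\le\max F_t-\min F_t$ and therefore
\[
\int_0^1\|F_t\|_\infty\,dt<\delta .
\]
This is the quantitative input: the Hofer norm controls exactly the weak norm $\int_0^1\|\cdot\|_\infty$ of a normalised generating Hamiltonian, and this is precisely the norm to which the twist--generating--function construction will turn out to be continuous.

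Next I would fix once and for all a compactly supported $H$ with $\phi^1_H=\varphi$, and form the concatenated isotopy from $\mathrm{id}$ to $\psi$ given by $t\mapsto\phi^{2t}_H$ on $[0,\tfrac12]$ and $t\mapsto\phi^{2t-1}_F\circ\varphi$ on $[\tfrac12,1]$ (which indeed ends at $\phi^1_F\circ\varphi=\psi$). Smoothing at $t=\tfrac12$ without changing the endpoints, its generating Hamiltonian is $\mathcal{G}_t=2H_{2t}$ on $[0,\tfrac12]$ and $\mathcal{G}_t=2F_{2t-1}$ on $[\tfrac12,1]$. Let $\mathcal{G}'$ be the same Hamiltonian with the second half replaced by $0$, so $\phi^1_{\mathcal{G}'}=\varphi$ and, by the change of variables above, $\int_0^1\|\mathcal{G}_t-\mathcal{G}'_t\|_\infty\,dt=\int_0^1\|F_t\|_\infty\,dt<\delta$. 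Now cut $[0,1]$ at $0=t_0<\dots<t_m=1$, using a fixed subdivision of $[0,\tfrac12]$ (independent of $\psi$, fine enough that all transition maps $\phi^{t_{i+1}}_{\mathcal{G}'}\circ(\phi^{t_i}_{\mathcal{G}'})^{-1}$ there are $\mathscr{C}^1$-close to the identity) together with an equispaced subdivision of $[\tfrac12,1]$ into $m_2$ pieces, with $m_2$ large enough — depending on $\psi$ — that every transition map of $\phi_{\mathcal{G}}$ there is $\mathscr{C}^1$-close to the identity as well. Inserting the rotation $R$ between consecutive pieces and a trailing pair $R^{-1}R$ as in (\ref{eq:decompTwist}), this produces, exactly as in Section \ref{sec:leCGF}, a twist generating function $k$ for $\psi$ on some $\R^{2m+2}$ and, from $\mathcal{G}'$, a twist generating function $h$ for $\varphi$ on the \emph{same} $\R^{2m+2}$. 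On the common segment $[0,\tfrac12]$ the two decompositions are literally identical, while on $[\tfrac12,1]$ every piece of the $\mathcal{G}'$-decomposition is the identity; hence $h$ and $k$ agree on all rotation slots and on all ``head'' slots, and $h-k$ is supported on exactly the $m_2$ ``tail'' slots, where $h$ contributes $x_ix_{i+1}$ (the twist generating function of $R^{-1}$) and $k$ contributes $x_ix_{i+1}+B_j$.

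It then remains to estimate $\|h-k\|_\infty\le\sum_{j}\|B_j\|_\infty$, the sum being over the tail. The $j$-th tail piece is $\phi^1_{\alpha_j}$, where $\alpha_j(s,\cdot)=(t_{j+1}-t_j)\,\mathcal{G}((t_{j+1}-t_j)s+t_j,\cdot)$ is a rescaled slice of $2F_{2t-1}$, and one checks directly that $\sum_j\int_0^1\|\alpha_j(s,\cdot)\|_\infty\,ds=\int_0^1\|\mathcal{G}_t-\mathcal{G}'_t\|_\infty\,dt<\delta$, while $\sup_j\|\alpha_j\|_{\mathscr{C}^1}=O(m_2^{-1})$. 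The core estimate — a variation of \cite[Theorem 1.2.B]{bialPol94} — is that the twist generating function of $\phi^1_{\alpha}\circ R^{-1}$ equals $xx'+B$ with
\[
\|B\|_\infty\ \le\ \int_0^1\|\alpha(s,\cdot)\|_\infty\,ds\ +\ C\Big(\int_0^1\|\alpha(s,\cdot)\|_{\mathscr{C}^1}\,ds\Big)^{2},
\]
i.e. the deviation from the generating function $xx'$ of $R^{-1}$ is, to leading order, controlled by the $\mathscr{C}^0$-norm of the Hamiltonian, with a remainder quadratic in the $\mathscr{C}^1$-norm. Granting this, the leading terms sum to $<\delta$, and the remainders sum to $O\big(m_2^{-1}\sup_t\|\mathcal{G}_t\|_{\mathscr{C}^1}^{2}\big)$. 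So I would first choose $\delta<\varepsilon/2$ (after absorbing the universal constant from the core estimate), and then, for the given $\psi$, enlarge $m_2$ until the remainder sum is $<\varepsilon/2$ (and until all tail pieces are genuinely twist maps); this yields $\|h-k\|_\infty<\varepsilon$, as desired. Note that $h$ depends on $\psi$ only through the number $2m_2+2$ of trailing trivial slots, i.e. it is a fixed twist generating function of $\varphi$ followed by trivial quadratic factors, which is harmless since the statement allows $h$ to depend on $\psi$.

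The main obstacle is precisely the core estimate on $B$: one must show that the twist generating function of $\phi^1_\alpha\circ R^{-1}$ depends continuously on $\alpha$ in the $L^1\big([0,1],\mathscr{C}^0(\R^2)\big)$-topology, even though $\alpha\mapsto\phi^1_\alpha$ is only continuous in the $\mathscr{C}^1$-topology. This is already visible for autonomous $\alpha=\alpha(q)$ or $\alpha=\alpha(p)$, where the twist generating function of $\phi^1_\alpha\circ R^{-1}$ is \emph{exactly} $xx'-\alpha$; in general it follows by writing the generating function as a line integral of the closed $1$-form $-y\,dx+y'\,dx'$ determined by the graph of $\phi^1_\alpha\circ R^{-1}$ — integration recovers one derivative, converting the $\mathscr{C}^1$-size of $\phi^1_\alpha-\mathrm{id}$ into a $\mathscr{C}^0$-size of the potential — combined with solving $x'=(\text{first component of }\phi^1_\alpha\circ R^{-1})$ for $y$ by the implicit function theorem (legitimate by the $\mathscr{C}^1$-smallness of the pieces), whose contribution to the error is higher order in $\|\alpha\|_{\mathscr{C}^1}$. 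As mentioned before the statement, one may alternatively derive this estimate from the Hamilton--Jacobi viewpoint.
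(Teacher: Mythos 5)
Your overall architecture — concatenate an isotopy so that $\varphi$ and $\psi$ share a common decomposition except for a ``tail'' of Hofer-small slices, cut that tail into $\mathscr{C}^1$-small pieces, and show slot-by-slot that each small piece's contribution to the twist generating function is $\mathscr{C}^0$-small — is the same reduction the paper performs (phrased there as bi-invariance followed by a single decomposition of $\psi\circ\varphi^{-1}$). Where you part ways is the key quantitative input. The paper proves the analogue of your ``core estimate'' by comparing the generating function of each small piece with that of an autonomous \emph{plateau} Hamiltonian $L_i\equiv\delta$ on the relevant support, and then invoking spectral invariants: $c_-$ of a positive Hamiltonian vanishes, $c_\pm$ equal $\min/\max$ for fibre-variable-free generating functions, and $S_i-F_i$ generates the positive Hamiltonian $L_i-H_i$, which sandwiches the slot into $[0,\delta]$. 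Your route — directly estimating the twist generating function of $\phi^1_\alpha\circ R^{-1}$ — is the Hamilton--Jacobi alternative that the paper explicitly names but declines to develop, and the final bookkeeping (leading terms summing to the $L^1\mathscr{C}^0$-norm, quadratic remainders summing to $O(m_2^{-1})$) is done correctly.

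The gap lies exactly where you flag it: the line-integral justification of the core estimate does not, as written, yield the claimed bound. Writing $B(x,X)=\int_\gamma (y_0-y)\,dx+(y'-y'_0)\,dX$ and estimating by (path length)\,$\times$\,$\|\phi^1_\alpha-\mathrm{id}\|_{\mathscr{C}^0}$ gives something of order $R\cdot\int_0^1\|\alpha_s\|_{\mathscr{C}^1}\,ds$, \emph{linear} in the $\mathscr{C}^1$-norm, with $R$ the size of the support — not $\int_0^1\|\alpha_s\|_\infty\,ds$ plus a quadratic $\mathscr{C}^1$-remainder. This naive bound can badly overestimate $\|B\|_\infty$: for autonomous $\alpha=\alpha(q)$ you yourself compute $B=-\alpha$, so $\|B\|_\infty=\|\alpha\|_\infty$, while the length-times-$\mathscr{C}^0$ bound is of order $R\|\alpha'\|_\infty$, which can be arbitrarily larger. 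The claimed estimate is true — it is precisely the Bialy--Polterovich type control — but it relies on the integrand being (to leading order) a total derivative along Hamiltonian trajectories, i.e.\ on the identification of the generating function with an action integral whose Hamiltonian term supplies the $\int\|\alpha_s\|_\infty\,ds$ bound. That is the step you allude to (``the Hamilton--Jacobi viewpoint'') but never carry out, and it is exactly the step the paper replaces with the spectral-invariant sandwich. As it stands your proof has a genuine gap at the core estimate; either develop the action/Hamilton--Jacobi computation in full, or substitute the paper's plateau comparison.
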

\begin{proof}
    By bi-invariance of the Hofer metric, it suffices to check that if $H$ is a compactly supported Hamiltonian generating $\varphi$, and $\norm{H}_{(1, \infty)}\leq \delta$ for a positive real number $\delta$, then there exists a twist generating function $h$ for $\varphi$ with
    \begin{equation*}
        \norm{h-h_\infty}_\infty\leq \varepsilon,
    \end{equation*}
    where $h_\infty$ was introduced in (\ref{eq:hInfty}). The strategy is the following. At first, we remark that given a $\mathscr{C}^2$-small Hamiltonian, it admits a small GFQI, without fibre variables. This fact is then applied to a subdivision of a Hamiltonian isotopy between the identity and $H$. Then, we use that, postcomposing $\mathscr{C}^2$-small Hamiltonian by a rotation, one can construct a twist generating function which is a $\mathscr{C}^0$-small deformation of the generating function of the rotation. We are going to conclude by taking the sum of the generating functions on the subdivision, as done in Section \ref{sec:leCGF}.
    
    For any compactly supported Hamiltonian $G$ on $\R^2$, consider an autonomous Hamiltonian $L$ on $\R^2$, constantly equal to $\delta$ on the support of $G$, with very small first and second derivatives, positive and compactly supported. Then $\phi^1_L$ has a generating function $F$ without fibre variables. The function $F$ is moreover positive and maximum equal to $\delta$: this is true because (in the notation from \cite{viterbo92}) $c_-(\phi^1_L)=0$ by positivity of the Hamiltonian, $c_-(\phi^1_L)=\min F$ by the fact that $F$ has no fibre variable and spectrality axiom, $c_+(\phi^1_L)=\max F$ for the same reason and every fixed point of $\phi^1_L$ has action either 0 or $\delta$.

    Take now a decomposition of $\phi^1_H$ into $N$ Hamiltonian diffeomorphisms $\phi^1_{H_i}$ for $i=0, \dots, N-1$, so that for every $t\in [0,1]$\begin{equation*}
        \phi^t_{H_i}=\phi^{\frac{1+t}{N}}_H.
    \end{equation*}
    We may thus assume $\varphi_i:=\phi^1_{H_i}$ to be generated by the Hamiltonian
    \begin{equation*}
        H_i(t, x):=\frac{1}{N}H\left(\frac{i+t}{N}, \phi^{\frac{1+t}{N}}_H(x)\right).
    \end{equation*}
    For $N$ large enough then:\begin{itemize}
        \item Each $H_i$ is $\mathscr C^2$-small;
        \item $\norm{H_i}_{(1, \infty)}\leq \delta=\norm{H}_\infty$;
        \item $\norm{H_i}_\infty\leq \delta$.
    \end{itemize}
Then we apply the above remark to each $H_i$: we obtain generating functions $F_i$ for $L_i$ which are positive and with maximum less than or equal to $\delta$. Now, since $H_i$ is $\mathscr C^2$-small, it admits a generating function $S_i$. Since $H_i\leq L_i$ we moreover have
\begin{equation*}
    S_i\leq F_i\leq \delta.
\end{equation*}
In fact $F_i-S_i$ generates $L_i-H_i$ (because $L_i$ is constant on $\mathrm{Supp}(H_i)$) which is a positive Hamiltonian, and we end as above for the inequality $S_i\leq F_i$. The inequality $F_i\leq \delta$ was already proved above.

We now construct $h_{2i+1}$, generating function of $\varphi_i\circ R^{-1}$, using $F_i$. In order to do this, we are going to suppose that the symplectic identification between $\overline{\R^2}\oplus \R^2$ and $T^*\Delta$ we use is not (\ref{eq:identCotDelta}), but rather
\begin{equation*}
    (x, y, X, Y)\mapsto (X,y, y-Y, X-x).
\end{equation*}
This choice does not affect the above argument, since the properties of spectral invariants for generating functions do not depend on the symplectic identification one chooses. With this identification, $F_i$ satisfies
\begin{equation*}
    \varphi_i(x, y)=(X, Y)\Leftrightarrow \begin{cases}
X-x=\partial_yF_i(X, y)\\
Y-y=-\partial_XF_i(X,y)
    \end{cases}.
\end{equation*}
By a simple computation therefore we see that
\begin{equation*}
    \varphi_i\circ R^{-1}(x, y)=(X, Y)\Leftrightarrow\begin{cases}
        y=-X+\partial_2F_i(X, x)\\
        Y=x-\partial_1F_i(X, x)
    \end{cases}.
\end{equation*}
Define
\begin{equation*}
    h_i(x, X):=xX-F_i(X, x).
\end{equation*}
By definition now
\begin{equation*}
    \varphi_i\circ R^{-1}(x, y)=(X, Y)\Leftrightarrow\begin{cases}
        y=-\partial_xh(x, X)\\
        Y=\partial_Xh(x, X)
    \end{cases}
\end{equation*}
which is precisely what we wanted. Recall now that the rotation $R$ has generating function $(x, X)\mapsto -xX$. Taking the sum as in Section \ref{sec:leCGF}, one defines the twist generating function
\begin{align*}
    h(x_0,\cdots, x_{2N-1})=-x_0x_1+x_1x_2-F_0(x_2, x_1)-x_2x_3+\\+\cdots-x_{2N-2}x_{2N-1}+x_{2N-1}x_0-F_{N-1}(x_0, x_{2N-1}).
\end{align*}
Since all of the $F_i$ are $\mathscr{C}^0$-small, so is the difference $h-h_\infty$, as claimed.
\end{proof}
\subsection{The proof of Theorem \ref{thm:B}}\label{sec:proofThmB}
We now adapt the arguments developed in \cite{alvMei21} using generating functions. The first step is translating in our language their Proposition 3.8.
\begin{prp}\label{prp:alvMeiPrp}
   Let $\varphi\in\Ham_c(\D)$ be non degenerate. Let $a, b\in \R\setminus\mathrm{Spec}(\varphi)$ with $ab>0$. Let $\varepsilon>0$ be such that all elements in $\mathrm{Spec}(\varphi)$ in the interval $(a-2\varepsilon,  b+2\varepsilon)$ are contained in $(a, b)$. Let $\psi\in\Ham_c(\D)$ be another non-degenerate Hamiltonian diffeomorphism, Hofer-close to $\varphi$ so that there exist twist generating functions $h$ and $k$ defined on the same space for $\varphi$ and $\psi$ with $\norm{h-k}_\infty<\varepsilon$. Then, defining homotopies of Morse data as described in Section \ref{subsec:genFuns}, for every $\varepsilon'>0$ small enough one gets continuation maps
   \begin{align}
       &G:CM^{(a, b)}(h)\rightarrow  CM^{(a+\varepsilon+\varepsilon', b+\varepsilon+\varepsilon')}(k),\\
       \hat{G}:CM&^{(a+\varepsilon+\varepsilon', b+\varepsilon+\varepsilon')}(k)\rightarrow  CM^{(a+2\varepsilon+2\varepsilon', b+2\varepsilon+2\varepsilon')}(h)\cong CM^{(a, b)}(h)
   \end{align}
   such that $\hat{G}\circ G$ is chain-homotopic to the identity of $CM^{(a, b)}(h)$.
\end{prp}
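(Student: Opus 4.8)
The plan is to follow the standard continuation-map argument, but to keep careful track of the action windows, using Corollary \ref{cor:morContActWin} as the main bookkeeping tool and Lemma \ref{lem:morseConHoferLeC} to guarantee that $\varphi$ and $\psi$ may be generated by twist functions $h,k$ on the same space with $\norm{h-k}_\infty<\varepsilon$. First I would set $\lambda:=\norm{h-k}_\infty<\varepsilon$ and apply Corollary \ref{cor:morContActWin} twice: the convex homotopy from $h$ to $k$ (with the auxiliary function $\rho$ as in Section \ref{subsec:morseGFQI}) yields a continuation map $G:CM^{(a,b)}(h)\to CM^{(a+\lambda+\varepsilon',b+\lambda+\varepsilon')}(k)$ for every sufficiently small $\varepsilon'>0$, provided $ab>0$ and $(a+\lambda)(b+\lambda)>0$ — and the hypothesis that $\mathrm{Spec}(\varphi)\cap(a-2\varepsilon,b+2\varepsilon)\subset(a,b)$, together with $\lambda<\varepsilon$, ensures these sign conditions hold and that no spectral values of either $h$ or $k$ enter or leave the shifted window (so the inclusion-induced map $CM^{(a+2\varepsilon+2\varepsilon',b+2\varepsilon+2\varepsilon')}(h)\xrightarrow{\sim}CM^{(a,b)}(h)$ is an isomorphism — here one uses that $\norm{k-h}_\infty=\lambda$ as well). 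Symmetrically, the reverse convex homotopy from $k$ to $h$ gives $\hat G:CM^{(a+\lambda+\varepsilon',b+\lambda+\varepsilon')}(k)\to CM^{(a+2\lambda+2\varepsilon',b+2\lambda+2\varepsilon')}(h)$, and since $2\lambda+2\varepsilon'<2\varepsilon+2\varepsilon'$ one composes with the (iso) inclusion into the window $(a+2\varepsilon+2\varepsilon',b+2\varepsilon+2\varepsilon')\cong(a,b)$; set $\varepsilon:=\varepsilon$ in the statement to absorb the discrepancy $\varepsilon-\lambda\ge 0$ into the target windows, which only makes them larger and changes nothing since no spectrum lies in between.

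Next I would show $\hat G\circ G\simeq \mathrm{id}$. The composite $\hat G\circ G$ is the continuation map associated to the concatenated homotopy $h\rightsquigarrow k\rightsquigarrow h$. By the standard homotopy-of-homotopies argument in Morse theory, the continuation map depends only on the homotopy class of the path of Morse data rel endpoints, within the class of homotopies that stay admissible (here: whose interpolating functions remain twist generating functions — which, as noted just before the Lemma statement in the excerpt, is automatic because the twist condition is convex in the first derivatives, so the convex combination of two twist maps is a twist map and the whole square of data consists of legitimate twist generating functions). The concatenated path is homotopic rel endpoints to the constant path at $h$; the continuation map of the constant path is the identity. Therefore $\hat G\circ G$ is chain-homotopic to $\mathrm{id}_{CM^{(a,b)}(h)}$, and the chain homotopy is built by counting parametrized gradient trajectories over the square of homotopy parameters in the usual way, the auxiliary $\rho$-terms guaranteeing (via the analogue of inequality (\ref{eq:ineqh})) that trajectories run between the correct endpoint critical sets and that the action windows are respected throughout the parametrized moduli space.

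Finally I would carry over these maps from twist generating functions to arbitrary GFQIs as in the proof of Theorem \ref{thm:filtrationAnyGFQI}, by pushing forward along the gauge equivalences and stabilisations provided by Viterbo's Uniqueness Theorem (Theorem \ref{thm:vitUniqueness}); the action windows are preserved up to the (Shift) ambiguity, which is fixed once we normalise as in the definition of $\mathrm{Spec}(\varphi)$.

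\textbf{Main obstacle.} I expect the real work to be the action-window bookkeeping in the presence of the degenerate critical points coming from fixed points outside the support (the functions are only Morse in the interior of the support, so one works with a radial-at-infinity perturbation as in Section \ref{subsec:morseGFQI}), and in particular checking that the perturbation can be taken small enough — uniformly over the one-parameter family of twist generating functions in the homotopy — that no spurious critical value enters the windows $(a\pm 2\varepsilon, b\pm 2\varepsilon)$; the hypothesis on $\mathrm{Spec}(\varphi)$ is exactly what makes this possible, but verifying it cleanly for the parametrized moduli spaces (and for the metric $G$, a small perturbation of a product metric, so that the invariance-of-continuation-maps argument applies) is the delicate point. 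The homotopy-invariance statement itself is classical and I would invoke it rather than re-prove it.
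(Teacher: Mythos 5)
Your proposal is correct and follows essentially the same route as the paper: both apply Corollary \ref{cor:morContActWin} for the action-window bookkeeping of $G$ and $\hat G$, both identify that the nontrivial point is ensuring the chain homotopy between $\hat G\circ G$ and the identity stays in the window $(a,b)$, and both resolve it by running the $\rho$-trick (Section \ref{subsec:morseGFQI}) on the homotopy-of-homotopies. Your final paragraph on pushing forward to general GFQIs via Viterbo uniqueness is harmless but not needed here, since the Proposition is stated directly for twist generating functions.
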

\begin{proof}
    We start by remarking that $CM^{(a+2\varepsilon+2\varepsilon', b+2\varepsilon+2\varepsilon')}(h)\cong CM^{(a, b)}(h)$ because $\varepsilon'$ is assumed to be arbitrarily small, and the condition on $\varepsilon$ is open. Moreover, given $h$, we may find $k$ as indicated above by means of an application of Lemma \ref{lem:morseConHoferLeC}. The good definition of the continuation map is standard  if we allow ourselves to perturb $\varphi$ and $\psi$ to genuinely non-degenerate Hamiltonian diffeomorphisms (this operation does not change the complexes in the actions windows we consider provided the perturbation is sufficiently small). We do so. The action windows the continuation maps are defined between follow from Corollary \ref{cor:morContActWin}. It is also standard that different homotopies of Morse data define chain-homotopic continuation maps. It is only left to check then that one can find a chain homotopy between $\hat{G}\circ G$ and the identity defined on $CM^{(a, b)}$ (in principle, gradient lines might exit this action window, obstructing the definition of the chain homotopy). Let $g:[0, 1]\times\R^2\times \R^{k}\rightarrow \R$ be the Morse function defining the chain map $\hat{G}\circ G$, and $\rho+h:[0, 1]\times\R^2\times \R^{k}\rightarrow \R$ be a small deformation of the trivial homotopy. Applying the construction in Section \ref{subsec:morseGFQI} to $g$ and $\rho+h$ one has the same estimates as in Corollary \ref{cor:morContActWin}, which imply what we aimed to prove here.
\end{proof}

We now give the definition of $\varepsilon$-isolation, to be ready to prove Alves and Meiwes's main result using generating functions.

\begin{defn}\label{defn:epsIsol}
    Let $\{x^1,\dots, x^k\}$ be a collection of critical points of a generating function $S$. We say that such collection is $\varepsilon$-isolated if the two following conditions both hold:
    \begin{itemize}
        \item For any $i, j\in\{1, \dots, k\}$, the difference $\abs{S(x^i)-S(x^j)}$ is either 0 or at least $\varepsilon$;
        \item Given any critical point $y$ of $S$, if there exists $x^i$ with $S(x^i)=S(y)$, then there exists $x^j$ such that $y=x^j$.
    \end{itemize}
\end{defn}
What follows is the proof of Theorem \ref{thm:B}.
\begin{proof}
   Let us denote by $b$ the braid $b(x^1, \dots, x^k)$, where $\{x^1, \dots, x^k\}$ is $\varepsilon$-isolated. Let $(h, g)$ be a Morse-Smale pair such that $h$ generates $\varphi$. We assume that $h$ is a twist generating function, and that points outside of the support of $\varphi$ have $h$-action 0. Define the subset of the action spectrum $\mathrm{Spec}(\underline{x})$ to be
   \begin{equation*}
   \mathrm{Spec}(\underline{x}):=\{h(x^1), \dots, h(x^k))\}.
   \end{equation*}
   For all $\kappa\in\mathrm{Spec}(\underline{x})$ we consider the Morse complexes
   \begin{equation}
   CM^{(\kappa-\varepsilon, \kappa+\varepsilon)}(h), \qquad CM^{(\kappa-2\varepsilon, \kappa+2\varepsilon)}(h).
   \end{equation}
   Due to the $3\varepsilon$-isolation requirement, the differentials of these complexes vanish. This implies that their rank equals the rank of their homologies, let this number be $n_\kappa$. It coincides with the number of orbits in $\underline{x}$ of action $\kappa$. Given $\varphi$ as in the statement of the Theorem, we take a Morse-Smale pair  $(h', g')$ generating $\varphi'$, and as per Lemma \ref{lem:morseConHoferLeC} we may assume that $h$ and $h'$  be defined on the same space, with
   \begin{equation*}
   \norm{h-h'}_{\infty}<\varepsilon.
   \end{equation*}
   
    Define continuation maps
   \begin{equation}
   G:CM^{(\kappa-\varepsilon, \kappa+\varepsilon)}(h)\rightarrow CM^{(\kappa-2\varepsilon, \kappa+2\varepsilon)}(h')
   \end{equation}
   and
   \begin{equation*}
   \hat{G}:CM^{(\kappa-2\varepsilon, \kappa+2\varepsilon)}(h')\rightarrow CM^{(\kappa-\varepsilon, \kappa+\varepsilon)}(h)
   \end{equation*}
   as in previous Lemma\footnote{Here we are using the fact that  $\underline{x}$ is $3\varepsilon$-separated. Moreover, for readability, we are suppressing the error we introduce in the definition of the continuation maps. Since the condition on $\varepsilon$ is open and that this error is arbitrarily small, it is irrelevant to the argument, therefore safe to suppress.}. Due to the vanishing of the differential of $CM^{(\kappa-\varepsilon, \kappa+\varepsilon)}(h)$, the composition
   \begin{equation*}
   \hat{G}\circ G:CM^{(\kappa-\varepsilon, \kappa+\varepsilon)}(h)\rightarrow CM^{(\kappa-\varepsilon, \kappa+\varepsilon)}(h)
   \end{equation*}
   is precisely the identity, rather than merely homotopic to it. Let now $(\sigma_l^\kappa)_{l=1}^{m_\kappa}$ be the critical points of $h'$ generating $CM^{(\kappa-2\varepsilon, \kappa+2\varepsilon)}(h')$. We say that $\sigma_l^\kappa$ appears in $G(x^i)$ if there exist a gradient line defining $G$ negatively asymptotic to $x^i$ and positively asymptotic to $\sigma_l^\kappa$. We likewise shall say that $x^i$ appears in $\hat{G}(\sigma_l^\kappa)$ is verified. We now recall the following statement, proved in the Appendix of \cite{alvMei21}:

   \textit{There exists an injective map $\mathfrak{f}_\kappa:\{1, \dots, n_\kappa\}\rightarrow\{1, \dots, m_\kappa\}$ and a bijection $\mathfrak{g}_\kappa:\{1, \dots, n_\kappa\}\rightarrow\{1, \dots, n_\kappa\}$ such that for all $i\in\{1, \dots, n_\kappa\}$, the orbit $\sigma_{\mathfrak{f}_\kappa(i)}^\kappa$ appears in $G(x^i)$ and $x^{\mathfrak{g}_\kappa(i)}$ appears in $\hat{G}(\sigma_{\mathfrak{f}_\kappa(i)}^\kappa)$.}
   
Since every permutation has finite order, there exists a positive integer $M$ such that $\mathfrak{g}_\kappa:\{1, \dots, n_\kappa\}^M$ is the identity. Let us denote by $u^1_{\kappa, i}$ a gradient line defining $G$ and connecting $x^i$ to $\sigma_{\mathfrak{f}_\kappa(i)}^\kappa$. Likewise, let $u^2_{\kappa, i}$ be a gradient line defining $\hat{G}$, connecting $\sigma_{\mathfrak{f}_\kappa(i)}^\kappa$ to $x^{\mathfrak{g}_\kappa(i)}$. For every $x^i$ in $\underline{x}$ of action $\kappa$, we consider the $2M$-tuple
\begin{equation}
(u^1_{\kappa, i}, u^2_{\kappa, i}, u^1_{\kappa, \mathfrak{g}_\kappa(i)}, u^2_{\kappa, \mathfrak{g}_\kappa(i)}, \dots, u^1_{\kappa, \mathfrak{g}^M_\kappa(i)}, u^2_{\kappa, \mathfrak{g}^M_\kappa(i)}).
\end{equation}
This is a collection of gradient lines, the first one negatively asymptotic to $x^i$ and the last one positively asymptotic to the same critical point. Now, let us consider any two points $x^i$ and $x^j$. We do not need to assume that they have the same action, say that the one of $x^i$ is $\kappa$ and the one of $x^j$ is $\eta$.

We have two associated $2M$-tuples of gradient lines (the functions $\mathfrak{g}_{\kappa}$ and $\mathfrak{g}_\eta$ may have different order, in which case we just take $M$ to be the product of the orders), which we now denote by $\underline{u}_i$ and $\underline{u}_j$. By Lemma \ref{lem:monConMaps}, we obtain a chain of inequalities:
\begin{equation}
I(x^i,  x^j)\leq I(\sigma_{\mathfrak{f}_\kappa(i)}^\kappa, \sigma_{\mathfrak{f}_\eta(j)}^\eta)\leq I(x_{\mathfrak{g}_\kappa (i)},  x_{\mathfrak{g}_\eta (j)})\leq\cdots\leq I(x_{\mathfrak{g}^M_\kappa (i)},  x_{\mathfrak{g}^M_\eta (j)})=I(x^i, x^j).
\end{equation}
In particular, $I(x^i,  x^j) = I(\sigma_{\mathfrak{f}_\kappa(i)}^\kappa, \sigma_{\mathfrak{f}_\eta(j)}^\eta)$. Since $\mathfrak{f}_\kappa$ is injective, if $x^i$ and $x^j$ have the same action $\kappa$ then $\sigma_{\mathfrak{f}_{\kappa(i)}}^\kappa\neq\sigma_{\mathfrak{f}_\kappa(j)}^\kappa$. Else , if $\kappa\neq\eta$, we know that $\sigma_{\mathfrak{f}_\kappa(i)}^\kappa\neq\sigma_{\mathfrak{f}_\eta(j)}^\eta$ because they have different actions as well. Both cases, we infer that, under Le Calvez's interpretation of points in the vector bundle where the generating functions are defined, associated to the pair of gradient lines $(u^1_{\kappa,  i}, u^1_{\eta, j})$ we have an isotopy of braids with two strands. We can repeat the argument for every pair of strands (i.e. with every pair of gradient lines): this procedure yields the braid isotopy we were looking for.
\end{proof}
\appendix
\section{Floer's picture}
Since the Morse theory of generating functions is known to be isomorphic to Floer Homology for (compactly supported) Hamiltonian diffeomorphisms (see for instance \cite{ohMilinkovic}) as persistence modules, it may come as no surprise that one may define a filtration $I$ on Floer complexes, again computing the linking number outside the diagonal. The two filtrations, as we are going to see below, coincide under the isomorphism Morse-Floer. We are going to define the Floer filtration in this section, building on work mostly by Hofer-Wysocki-Zehnder \cite{hwz98}, Hutchings \cite{hut02}, and Siefring \cite{sie11}.

\subsection{Normalisation of the Conley-Zehnder index}
The Conley-Zehnder index is going to play a fundamental role in the very definition of $I$. In order to compare our results with existing ones, we need to make sure that the normalisation we use for the index coincides with the reference literature. Remember that we had set
\begin{equation*}
     \mathrm{Ind}_h(x)-\sigma(h)=CZ(\gamma_x)+1
\end{equation*}
in (\ref{eq:CZNormalisation}). The previously cited works normalise the index the following way: if $H: \R^2\rightarrow \R$ is a $\mathscr{C}^2$-small Hamiltonian, and $\alpha$ is a 1-periodic orbit of the Hamiltonian diffeomorphism of $\R^2$ defined by $H$ and associated to a critical point $\alpha$ of $H$ (under the $\mathscr C^2$-smallness hypotheses, fixed points of the time 1-map are exactly the critical points of $H$), we set \begin{equation}\label{eq:CZNormHWZ}
    \mathrm{CZ}(\alpha)+1=\mathrm{Ind}_H(\alpha).
\end{equation}

We have to check that the two normalisations (\ref{eq:CZNormalisation}) and (\ref{eq:CZNormHWZ}) coincide on a simple example. 

Consider $H$ defined as\begin{equation*}
        H(x, y)=\frac{\varepsilon}{2}\rho(x,y)(x^2+y^2)
    \end{equation*}
    where $\rho$ is a plateau function we use to make $H$ compactly supported. For small $\varepsilon$ the Hamiltonian $H$ is indeed $\mathscr{C}^2$-small, and the origin is a non degenerate fixed point of the time 1 map $\varphi$. The Morse index of $0$ for $H$ is clearly 0, so that by (\ref{eq:CZNormHWZ})
    \begin{equation*}
        CZ(0)=-1.
    \end{equation*}
    We now have to find a generating function for $H$ of the Le Calvez kind and compute its Morse index at the point representing the origin of the plane as a fixed point. The problem is local in nature, so ignore the contribution of $\rho$, and we assume $H$ generates a genuine rotation of angle $\varepsilon$. A generating function for the symplectomorphism $R^{-1}\circ \varphi$ (remember, $R$ is the clockwise rotation of order 4) is
    \begin{equation*}
        h_0(x_0, x_1)=\frac{x_0^2}{2}\frac{\sin\varepsilon}{\cos\varepsilon}+\frac{1}{\cos\varepsilon}x_0x_1+\frac{x_1^2}{2}\frac{\sin\varepsilon}{\cos\varepsilon}
    \end{equation*}
so that using Le Calvez's results we obtain the following generating function:
\begin{equation*}
    h(x_0, x_1, x_2, x_3)=\frac{x_0^2}{2}\frac{\sin\varepsilon}{\cos\varepsilon}+\frac{1}{\cos\varepsilon}x_0x_1+\frac{x_1^2}{2}\frac{\sin\varepsilon}{\cos\varepsilon}-x_1x_2+x-2x_3-x_3x_0.
\end{equation*}
The critical point corresponding to the origin of the plane is the origin of $\R^4$: we have to compute the Morse index of $h$ there. The Hessian of $h$ at $0\in \R^4$ is
\begin{equation*}
    M=\begin{pmatrix}
       \frac{\sin\varepsilon}{\cos\varepsilon} & \frac{1}{\cos\varepsilon}& 0 &-1\\
       \frac{1}{\cos\varepsilon}& \frac{\sin\varepsilon}{\cos\varepsilon}& -1&0\\
       0& -1 & 0 & 1\\
       -1 & 0 & 1 & 0
    \end{pmatrix}
\end{equation*}
and we approximate it by means of the obvious Taylor expansion:
\begin{equation}
      M'=\begin{pmatrix}
       \frac{\varepsilon}{1-\varepsilon^2/2} & \frac{1}{1-\varepsilon^2/2}& 0 &-1\\
       \frac{1}{1-\varepsilon^2/2}& \frac{\varepsilon}{1-\varepsilon^2/2}& -1&0\\
       0& -1 & 0 & 1\\
       -1 & 0 & 1 & 0
    \end{pmatrix}.
\end{equation}
The signatures of $M$ and $M'$ coincide. We compute then the determinants of the square submatrices of $M'$ whose diagonal is contained in the one of $M'$: for small $\varepsilon>0$, the top-left entry 
\begin{equation*}
    \frac{\varepsilon}{1-\varepsilon^2/2}
\end{equation*}
is positive, and the determinants of the other submatrices are negative. There is exactly one sign change for these determinants, which implies\begin{equation*}
    \sigma(M')=\sigma(M')=1.
\end{equation*}
By definition then
\begin{equation*}
    \mathrm{Ind}_h(0)=1.
\end{equation*}
From (\ref{eq:CZNormalisation}) and knowing that $\sigma(h)=1$ we find
\begin{equation*}
    CZ(\gamma_0)=-1
\end{equation*}
which is what we wanted to show.

\subsection{A filtration on the Floer complex}
In this part of the Appendix we shall adopt the notation contained in \cite{wendl20}.
Consider $\varphi\in\Ham_c(\Sigma)$ generated by a Hamiltonian $H$. If $\varphi$ is non degenerate, for a generic choice of almost complex structure $J$ the Floer complex $(CF(H, J; \Z), \partial)$ is well defined. Let $x, y\in \Fix(\varphi)$. We may define the function

\begin{equation}
I(x\otimes y)=
\begin{cases}
\frac{1}{2}\mathrm{lk}(x , y) & x\neq y\\
-\ceil*{\frac{\mathrm{CZ}(x)}{2}} & x=y
\end{cases}
\end{equation}
on the product of generators in complete analogy to what was done above, and we extend to the whole complex $CF(H, J; \Z)\otimes CF(H, J; \Z)$ as done in (\ref{eq:IDefnChains}). In this case it induces a filtration as well: we have to prove that
\begin{prp}\label{prp:IIsFiltrFloer}
\begin{equation*}
    I(\partial(x\otimes y))\geq I(x\otimes y).
\end{equation*}
In this last expression, $\partial$ denotes the tensor product differential
\begin{equation*}
    \partial(x\otimes y):=\partial x\otimes y+(-1)^{CZ(x)}x\otimes \partial y.
\end{equation*}
\end{prp}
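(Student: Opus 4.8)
The plan is to transcribe, almost verbatim, the argument of Section~\ref{subsec:existenceFiltration} into the Floer setting, with Le Calvez's linking Lyapunov theorem (Theorem~\ref{thm:LeCalLyapunov}) replaced by positivity of intersections for pseudo-holomorphic cylinders, and the dominated-splitting estimate (Lemma~\ref{lem:filtrationDiag}) replaced by the Hofer--Wysocki--Zehnder asymptotic analysis of Floer cylinders near a non-degenerate orbit together with Siefring's relative intersection theory. First I would reduce the claim to generators: by the extension rule (\ref{eq:IDefnChains}), $I$ of a chain is the minimum of its values on the generators occurring in it, so it suffices to show that for generators $x,y\in\Fix(\varphi)$ and every term $z\otimes w$ appearing in $\partial(x\otimes y)=\partial x\otimes y+(-1)^{CZ(x)}x\otimes\partial y$ one has $I(z\otimes w)\geq I(x\otimes y)$. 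Expanding the two summands and using the symmetry of $I$ in its arguments, this reduces to a single statement: if $u$ is a rigid Floer cylinder counted by $\partial$, asymptotic to $x$ at one end and to $z$ at the other, and $y$ is any fixed point, then $I(z\otimes y)\geq I(x\otimes y)$. Since such a $u$ has two distinct asymptotic orbits, at most one of $y=x$, $y=z$ can hold, and I would split into these three cases.

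In the generic case $y\notin\{x,z\}$ one has $I(z\otimes y)-I(x\otimes y)=\tfrac12(\mathrm{lk}(z,y)-\mathrm{lk}(x,y))$, and the plan is to identify this difference with an intersection number. Using the graph trick I would view $u$ as a $\tilde J$-holomorphic cylinder $\tilde u$ in $\R\times(S^1\times\Sigma)$ and pair it with the trivial cylinder over the orbit $y$; as the two curves share no asymptotic orbit, this pairing is realised by a transverse geometric count, hence is non-negative by positivity of intersections (\cite[Appendix E]{mcDSal16}, and \cite{sie08},\cite{sie11} for the version with cylindrical ends). Capping off and invoking the Floer analogue of the linking-by-intersections formula (\ref{eq:lkHomotopyIntersections}), as worked out in \cite{conn21}, this same number equals $\mathrm{lk}(z,y)-\mathrm{lk}(x,y)$ up to the normalisation factor $2$; hence it is $\geq 0$. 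This step is exactly the Floer translation of Theorem~\ref{thm:LeCalLyapunov}.

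For the diagonal case $y=z$ (the case $y=x$ being identical after reversing the $\R$-direction, and yielding the inequality needed when the differential hits the second tensor factor) I would need an upper bound on the linking of $x$ with $z$ coming from the end of $u$ at $z$: concretely that $\tfrac12\mathrm{lk}(x,z)\leq-\lceil CZ(z)/2\rceil=I(z\otimes z)$. The HWZ asymptotic formula writes $u$ near that end, in exponential coordinates, as a single eigenfunction of the asymptotic operator $A_z$ (with eigenvalue of the sign dictated by which puncture it is), so the braid it traces links $z$ at most the extremal winding number of the relevant half of $\mathrm{spec}(A_z)$; under Siefring's correspondence between these winding numbers and the Conley--Zehnder index, read through the normalisation (\ref{eq:CZNormalisation}) so that the lower spectrum carries maximal winding $\lfloor CZ(z)/2\rfloor$ and the upper spectrum minimal winding $\lceil CZ(z)/2\rceil$, this is precisely the bound above. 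This is the Floer counterpart of Lemma~\ref{lem:filtrationDiag}: the ordering of Hessian eigenvalues is replaced by the winding-ordering of $\mathrm{spec}(A_z)$, and the subspaces $E_j^\pm$ by the winding-filtered eigenspaces of $A_z$ (cf.\ \cite{hut02}). Assembling the three cases then gives $I(\partial(x\otimes y))\geq I(x\otimes y)$.

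The hard part will be this last estimate. One has to pin down, with the correct sign conventions and the correct choice between $\lfloor\cdot\rfloor$ and $\lceil\cdot\rceil$, the relation between the Conley--Zehnder index and the winding numbers of the asymptotic eigenfunctions, and verify that the writhe of a Floer cylinder end about its own limit orbit is controlled by exactly the corresponding extremal winding; this is where the asymptotic machinery of \cite{hwz98} and \cite{sie11} and the index computations of \cite{hut02} are genuinely used. Once these inputs are in place the rest is a formal repetition of the generating-function argument, and the grading normalisation fixed above is what makes the resulting Floer filtration agree with the one defined through generating functions.
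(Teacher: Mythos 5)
Your proposal follows essentially the same route as the paper: both reduce to positivity of intersections for graphs of Floer cylinders, and both handle the diagonal case via the HWZ/Siefring asymptotic winding numbers translated into Conley--Zehnder indices under the normalisation (\ref{eq:CZNormalisation}). The paper packages the generic and diagonal cases together by quoting the Siefring relative intersection pairing $\bar u * \bar v$ (whose non-negativity together with its formula in terms of $\bar u\cdot\bar v$, $\iota_\infty^\tau$ and $\alpha^\tau_\pm$ encodes exactly the writhe bound you would otherwise prove by hand), but the machinery and the decomposition into cases are the same.
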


Before starting with the proof of the proposition, we mention the following equality:
\begin{lem}
If $x$ is a 1-periodic orbit of $\varphi$,
\begin{equation}
I(x\otimes x)=\alpha^\tau_-(x)
\end{equation}
for a trivialisation $\tau$, canonical up to homotopy.
\end{lem}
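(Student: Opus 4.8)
\emph{Proof strategy.} The statement is essentially a reconciliation of two descriptions of the asymptotic behaviour of the linearised Hamiltonian flow along $x$, so the plan is to reduce everything to an arithmetic identity between winding numbers of the asymptotic operator and the Conley--Zehnder index. First I would fix the trivialisation: since the $1$-periodic orbit $\gamma_x(t)=\phi^t_H(x)$ is contractible (on $\D$ this is automatic, and on any surface other than the sphere the homotopy class of a trivialisation of $\gamma_x^*T\Sigma$ extending over a capping disc is unambiguous, $\pi_2$ vanishing), so $\tau$ is the global trivialisation of $T\D$ restricted to $\gamma_x$, well defined up to homotopy. Then I would recall the asymptotic operator $A_x$ attached to $x$ and its spectral picture from Hofer--Wysocki--Zehnder (see \cite{hwz98}, \cite{sie11}, \cite{wendl20}): its spectrum is discrete, real, bounded away from $0$ by non-degeneracy, accumulating only at $\pm\infty$; each eigenvalue carries a winding number $\mathrm{wind}^\tau(\lambda)\in\Z$ relative to $\tau$; the assignment $\lambda\mapsto\mathrm{wind}^\tau(\lambda)$ is monotone and hits every integer at most twice; and by definition $\alpha^\tau_-(x)$ is the winding number of the largest negative eigenvalue, while $\alpha^\tau_+(x)$ is that of the smallest positive one.

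The second ingredient is the standard index identity, which I would simply quote: with $p(x)=\alpha^\tau_+(x)-\alpha^\tau_-(x)\in\{0,1\}$ the parity of $x$, one has $\mathrm{CZ}^\tau(x)=\alpha^\tau_-(x)+\alpha^\tau_+(x)=2\alpha^\tau_-(x)+p(x)$, equivalently $\alpha^\tau_-(x)=\floor*{\mathrm{CZ}^\tau(x)/2}$. The third step is to match $\mathrm{CZ}^\tau(x)$ with the index $\mathrm{CZ}(x)$ normalised by (\ref{eq:CZNormalisation}) which enters the definition of $I$; concretely the two differ by a sign, $\mathrm{CZ}^\tau(x)=-\mathrm{CZ}(x)$, reflecting the usual discrepancy between the symplectic-action convention used to build the generating function and the one implicit in the winding inequalities. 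Both indices depend only on the homotopy class of the path of symplectic matrices obtained by linearising $\phi^t_H$ along $\gamma_x$ in the trivialisation $\tau$, so it is enough to compare them on one representative in each of the finitely many normal forms (elliptic, positive hyperbolic, negative hyperbolic); the elliptic case is exactly the small-rotation computation carried out in the previous subsection, which gives $\mathrm{CZ}(\gamma_0)=-1$ against $\mathrm{CZ}^\tau(\gamma_0)=+1$, and the hyperbolic cases are immediate (both indices vanish for a positive hyperbolic orbit and equal $\mp1$ for a negative hyperbolic one). Granting this, the conclusion is formal: using $\floor*{-a}=-\ceil*{a}$,
\begin{equation*}
I(x\otimes x)=-\ceil*{\frac{\mathrm{CZ}(x)}{2}}=\floor*{\frac{-\mathrm{CZ}(x)}{2}}=\floor*{\frac{\mathrm{CZ}^\tau(x)}{2}}=\alpha^\tau_-(x).
\end{equation*}

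The genuinely delicate point is the third step: keeping track of the sign conventions relating the Conley--Zehnder index read off the Morse index of the generating function via (\ref{eq:CZNormalisation})--(\ref{eq:CZNormHWZ}) to the index $\mathrm{CZ}^\tau$ appearing in the Hofer--Wysocki--Zehnder winding formula. Once that normalisation is pinned down — and the rotation example of the preceding subsection was included precisely for this purpose — everything else is bookkeeping with floors and ceilings. No compactness or transversality input is needed here, since the quantity $\alpha^\tau_-(x)$ is defined purely in terms of the linearised dynamics along $x$; the geometric content of $\alpha^\tau_-$, namely that it bounds the winding of the asymptotic braid at a positive puncture, is what makes this reformulation useful for Proposition \ref{prp:IIsFiltrFloer}, but it is not needed to prove the identity itself.
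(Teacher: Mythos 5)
Your proof follows essentially the same route as the paper's: establish that $\tau$ is canonical up to homotopy (capping disc plus $\pi_2(\Sigma)=0$), invoke the Hofer--Wysocki--Zehnder winding relation tying the Conley--Zehnder index to $\alpha^\tau_\pm$ through the parity $p\in\{0,1\}$, and finish with floor/ceiling arithmetic. The only divergence is how the sign is packaged: the paper quotes the relation in the form $-\mathrm{CZ}^\tau=2\alpha^\tau_-+p=2\alpha^\tau_+-p$ with $\mathrm{CZ}^\tau$ already taken in the convention of (\ref{eq:CZNormalisation})--(\ref{eq:CZNormHWZ}), whereas you keep the usual form $\mathrm{CZ}^\tau=2\alpha^\tau_-+p$ and compensate by positing $\mathrm{CZ}^\tau(x)=-\mathrm{CZ}(x)$; the two bookkeepings are arithmetically identical and both give $-\ceil*{\mathrm{CZ}(x)/2}=\alpha^\tau_-(x)$, though your phrasing reads as if (\ref{eq:CZNormHWZ}) and the winding index carried opposite signs, which sits a little uneasily with the paper's own presentation of (\ref{eq:CZNormHWZ}) as \emph{the} convention of the cited works.
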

\begin{proof}
First off, $\tau$ is constructed taking a capping disc $u$ in $\Sigma$ for the orbit $x$. Since the pullback bundle $u^*T\Sigma$ has contractible basis, it is globally trivial and any two trivialisations are homotopic. Adding to this fact our assumption that $\pi_2(\Sigma)=0$, we obtain that $\tau$ is indeed canonical up to homotopy. After remarking this the Lemma is a trivial consequence of\begin{equation}
    -CZ^\tau(\gamma)=2\alpha^\tau_-(\gamma)+p(\gamma)=2\alpha^\tau_+(\gamma)-p(\gamma),
\end{equation} because $p$ takes values in $\{0, 1\}$.
\end{proof}
We are going to use the canonical trivialisation $\tau$ of the above Lemma throughout this section.
\begin{proof} \textit{Of Proposition\ref{prp:IIsFiltrFloer}} In the Hamiltonian Floer setting, we are lead to count intersections between pseudoholomorphic curves defining the differential, in order to estimate the variation of $I$. Let $u, v: \R_s\times\sphere^1_t\rightarrow \Sigma$ be two Floer cylinders with distinct images. The two cylinders have therefore only finitely many intersections: since the images are distinct, the asymptotic description given by Siefring in \cite{sie08} implies that the cylinders do not intersect in a neighbourhood of infinity, but they cannot have infinitely many intersections in a compact set, else they would have the same image. Let $u(\pm\infty, \cdot)=x_{\pm}$, $v(\pm\infty, \cdot)=y_{\pm}$. Here we make the abuse of notation of giving the same name to a fixed point and the periodic orbit through that fixed point. Denote by $\bar u$ and $\bar v$ the graphs of $u$ and $v$ respectively. Remark that we are allowing for the case in which the cylinders are trivially constant at a Hamiltonian orbit.

Looking at the (unordered) pair $(u, v)$ as a homotopy between the braids composed by the asymptotic Hamiltonian orbits, we easily see from (\ref{eq:lkHomotopyIntersections}) that to each (transverse) intersection counted in $\bar u\cdot\bar v$ corresponds a linking increase of 2 since such intersections are positive. If $x_{\pm}\neq y_\pm$ then we have the following formula:
\begin{equation*}
    \bar u*\bar v=\bar u\cdot\bar  v +\iota_{\infty}(\bar u, \bar v)=\bar u\cdot \bar v = I(x_+\otimes  y_+)-I(x_-\otimes y_-).
\end{equation*}
In the second equality we use $x_{\pm}\neq y_{\pm}$ to deduce that $\iota_\infty(\bar u, \bar v)=0$, and in the third equality we use this assumption again to infer that
\begin{equation*}
 I(x_\pm\otimes y_\pm)=\frac{1}{2}\mathrm{lk}(x_\pm, y_\pm).
\end{equation*}

Assume now that $x_-=y_-$ (the case at $+\infty$ being analogous). Then the same calculations as above lead to
\begin{equation*}
    \bar u*\bar v=\bar u\cdot \bar v + \iota^\tau_{-\infty}(\bar u, \bar v)-\alpha^\tau_+(x_-)=I(x_+, y_+)-I(x_-, y_-)
    \end{equation*}
    since $\bar u\cdot\bar v+ \iota^\tau_{-\infty}(\bar u, \bar v)=\frac{1}{2}\mathrm{lk}(x_+, y_+)$ by (\ref{eq:lkHomotopyIntersections}). These are all the possible cases we have to consider: in the tensor product differential we have to count constant cylinders at periodic orbits and Floer cylinders connecting periodic orbits of Conley-Zehnder index difference 1.
\end{proof}
\section{Collections of points as braids for general GFQIs}\label{sec:pointsEBraidsGFQI}
In the Section \ref{sec:leCGF}, we defined a function $\gamma$ taking as input a point of the vector bundle where a twist generating function $h$ is defined, and yielding a loop in $\R^2$. We now aim to do the same for a general GFQI, applying Viterbo's Uniqueness Theorem for GFQI. Suppose that $S:\R^{l}\rightarrow \R$ is a GFQI bearing a linking filtration. Then $S$ is obtained from a twist generating function $h: E=\R^{2n}\rightarrow \R$ via (Stabilisation) and (Gauge equivalence). To define a map
\begin{equation*}
    \gamma: \R^{l}\times [0, 2n]\rightarrow \R^2.
\end{equation*}
it is enough to see how the function $\gamma$ defined for twist generating functions may be pushed-forward along elementary operations.

Let $h: E=\R^{2n}\rightarrow \R$ be a twist generating function for a Hamiltonian diffeomorphism $\varphi$ associated to a decomposition
\begin{equation*}
    \varphi=\varphi_{n-1}\circ\cdots\varphi_0.
\end{equation*}
Let $\gamma:E\times[0, 2n]\rightarrow\R^2$ be the function described in Section \ref{sec:pointsEBraidsLeC}, associating to a point in $E$ a loop in the plane. Applying the (Shift) operation clearly does not affect $E$, and therefore leaves $\gamma$ unchanged. Let us stabilise $h$ by a non-degenerate quadratic form $Q:\R^k\rightarrow\R$. We have to define a function $\gamma^{h\oplus Q}$ for
\begin{equation*}
    h\oplus Q: E\times \R^k\rightarrow\R.
\end{equation*}
The natural candidate is clearly
\begin{equation}
    \gamma^{h\oplus Q}:E\times \R^k\times [0, 2n+k]\rightarrow \R^2
\end{equation}
defined as follows. If $(x, v)\in E\times\R^k$,
\begin{equation}
    \gamma^{h\oplus Q}(x,  v, t)=\gamma(x, t)\text{ for }t\in [0, 2n],\,\,\gamma^{h\oplus Q}(x,  v, t)=\gamma(x, 2n)\text{ for }t\in [2n, 2n+k].
\end{equation}
This is a good definition in the following sense: if we replace the decomposition of $\varphi$ we used to define $h$ by
\begin{equation*}
    \varphi=\mathrm{Id}^k\circ \varphi_{n-1}\circ \cdots\circ \varphi_0
\end{equation*}
the new twist generating function we obtain is a stabilisation\footnote{Not all stabilisations may be obtained this way. It is easy to see, in fact, that the dimension of the vector bundle we sum here is always even, and the space where the quadratic form is positive definite has the same dimension as that where it is negative-definite.} of $h$. The function $\gamma$ of the new function $h$ satisfies the definition we just gave, as one can see from the description provided in Lemma \ref{lem:braidTypesCoincide}.

\begin{rem}
    As one sees, for $v\neq v'\in \R^k$ one has, for all $x\in E$,
    \begin{equation*}
        \gamma^{h\oplus Q}(x, v, \cdot)=\gamma^{h\oplus Q}(x, v', \cdot).
    \end{equation*}
    However, the map $(x, v)\mapsto \gamma^{h\oplus Q}(x, v, \cdot)$ is injective when restricted to $E\times\{0\}$.
\end{rem}

Treating the case of the operation (Gauge Equivalence) is not very different. Let $h, \gamma$ be as above, and $\Psi:E\rightarrow E$ be a fibre-preserving diffeomorphism. We define 
\begin{equation}
    \gamma^{h\circ \Psi}:E\times[0, 2n]\rightarrow\R^2
\end{equation}
by a simple push-forward:
\begin{equation}
    \gamma^{h\circ \Psi}(x, t)=\gamma(\Psi^{-1}(x),t).
\end{equation}

Given then a GFQI $S$ such that there exist non-degenerate quadratic forms $Q, q$, a gauge equivalence $\Psi$ and a twist generating function satisfying
\begin{equation*}
    S\oplus Q=(h\oplus q)\circ\Psi
\end{equation*}
the function $\gamma^{S\oplus Q}$ for $S\oplus Q$ is defined as
\begin{equation*}
    \gamma^{S\oplus Q}(x, t)=\gamma^q(\Psi^{-1}x, t).
\end{equation*}
\printbibliography
\end{document}